\newtheorem{theorem}{Theorem}[section] 
\newtheorem{lemma}[theorem]{Lemma}
\newtheorem{remark}[theorem]{Remark}
\newtheorem{proposition}[theorem]{Proposition}
\newtheorem{corollary}[theorem]{Corollary}
\newcommand{\C}{\mathbb{C}}
\newcommand{\R}{\mathbb{R}}
\newcommand{\K}{\mathbb{K}}
\newcommand{\p}{\partial}
\newcommand{\onab}{\overline{\nabla}}
\newcommand{\fg}{\mathfrak{g}}
\newcommand{\fk}{\mathfrak{k}}
\newcommand{\fm}{\mathfrak{m}}
\newcommand{\fn}{\mathfrak{n}}
\begin{document}

\title[Index estimate by first Betti number of minimal hypersurfaces]
{Index estimate by first Betti number of minimal hypersurfaces in compact symmetric spaces}

\author[T.Kajigaya, K. Kunikawa]{Toru Kajigaya and Keita Kunikawa}
\address[T. Kajigaya]{Department of Mathematics, Faculty of Science, Tokyo University of Science
1-3 Kagurazaka Shinjuku-ku, Tokyo 162-8601 JAPAN}
\email{kajigaya@rs.tus.ac.jp}

\address[K. Kunikawa]{Graduate School of Technology, Industrial and Social Sciences, Tokushima University, 2-1 Minamijosanjima-cho,
Tokushima, 770-8506, Japan}
\email{kunikawa@tokushima-u.ac.jp}




\subjclass[2020]{53C42, 53C35}
\date{\today}

\begin{abstract}
We show that the Morse index of unstable closed minimal hypersurface $\Sigma$ in a compact semi-simple Riemannian symmetric space $M=G/K$ is bounded from below by constant times the first Betti number of $\Sigma$. Our proof is based on a natural extension of the previous method and this also provides a novel approach for the index estimate. 
 \end{abstract}

\maketitle

\section{Introduction}
Let $\Sigma$ be a closed hypersurface in a closed Riemannian manifold $(M,g)$. $\Sigma$ is called {\it minimal} if it is a critical point of the volume functional under any infinitesimal deformation of $\Sigma$. The study of minimal hypersurfaces is still actively researched in connection with various areas of mathematics. One of the main concern is related to the Morse index and the topology of minimal hypersurface. The notion of {\it index} is defined by the maximal dimension of the subspace of the space of normal vector fields such that the Hessian of the volume functional is negative-definite on the subspace, and this measures how many directions exist to decrease the volume of $\Sigma$. There are many interesting studies on the estimate of index and the relation to geometry of minimal hypersurface in a given Riemannian manifold. We refer the reader to \cite{ACS, Mar, Nev, Ros, Song} and references therein for a detailed background and related results. 

As stated in \cite{ACS} (see also \cite{GMR, Mar, MR, Nev, Song}), there is a conjecture by Schoen, Marques and Neves that, if $M$ has positive Ricci curvature and ${\rm dim}M\geq 3$, then there exists a positive constant $C$ depending only on $M$ such that the inequality 
\begin{align}\label{eq:SMN}
{\rm index}(\Sigma)\geq C b_1(\Sigma)
\end{align}
holds for any closed embedded minimal hypersurface $\Sigma$ in $M$, where $b_1(\Sigma)$ is the first Betti number of $\Sigma$. This conjecture was confirmed by Savo \cite{Savo} in the case when $M$ is the standard sphere  $S^n$, and by generalizing his method, Ambrozio-Carlotto-Sharp \cite{ACS} proved the conjecture affirmatively when $M$ is a compact rank-one symmetric space (i.e. $M$ is either $S^n$, $\R P^n$, $\C P^n$, $\mathbb{H} P^n$ or the Cayley plane $\mathbb{O} P^2$). It is a natural attempt to extend these results to a general compact Riemannian symmetric space ({\it RSS} for short) of positive Ricci curvature, in particular, to a compact {\it semi-simple} RSS.  Here, a compact symmetric space is called {semi-simple} if the identity component of the isometry group $I_{0}(M)$ becomes a compact semi-simple Lie group, and it provides a very typical example of Riemannian manifold of positive Ricci curvature.

 In this direction,  by using the same method,  Ambrozio-Carlotto-Sharp \cite{ACS} and Gorodski-Mendes-Radeschi \cite{GMR} verified the conjecture for the following compact semi-simple symmetric spaces;  The product of sphere $S^p\times S^q$ of $p,q\geq 2$ and $(p,q)\neq (2,2)$, the quaternionic Grassmannian manifolds, $Sp(n)$ (for any $n$) and $SU(n)$ of $n\leq 17$. We remark that, besides these symmetric examples, the conjecture has been also confirmed  under some situations (see \cite{ACS, GMR} for other results).  However, as considered in \cite{GMR}, it seems difficult to prove the conjecture by using the method in \cite{ACS} for a general compact symmetric space.

In \cite{MR}, Mendes-Radeschi generalized the method by introducing the notion of {\it virtual immersion} of a Riemannian manifold $M$.   Moreover, they proved that the inequality \eqref{eq:SMN} holds when $M$ is a product of two compact rank-one symmetric spaces, or for an oriented minimal hypersurface $\Sigma$ in an oriented compact semi-simple symmetric space with an additional assumption that there exists a point $p\in \Sigma$ such that the principal curvatures at $p$ are distinct (\cite[Corollary D]{MR}).  
Note that as mentioned in \cite{MR}, the last result of \cite{MR} can be slightly extended,  namely, the same conclusion holds  if $\Sigma$ is unstable (i.e. ${\rm index}(\Sigma)\geq 1$) and satisfies the assumption of principal curvatures.  See also Appendix \ref{A:MR} for the method using the virtual immersion.

The aim of the present paper is to provide another approach to extend the method used by Savo and Ambrozio-Carlotto-Sharp. Furthermore, we confirm that the inequality \eqref{eq:SMN} holds if $M$ is a compact semi-simple RSS and the minimal hypersurface $\Sigma$ in $M$ is unstable.  More precisely, we show the following result.

\begin{theorem}[Theorem \ref{thm:main}]\label{thm:main0}
Let $M$ be a compact semi-simple Riemannian symmetric space and $\Sigma$ be a closed minimal hypersurface embedded in $M$. If $\Sigma$ is unstable, then it holds that
\begin{align}\label{eq:main0}
{\rm index}(\Sigma)\geq \frac{2}{d(d-1)+2(2n-3)}b_1(\Sigma),
\end{align}
where  $d$ is the dimension of  the isometry group  of $M$ and $n={\rm dim} M$.
\end{theorem}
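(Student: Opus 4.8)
The plan is to carry out a symmetrization argument in the spirit of Savo and Ambrozio--Carlotto--Sharp, using the ambient Killing fields of the homogeneous space $M=G/K$ to turn harmonic $1$-forms on $\Sigma$ into test sections for the Jacobi operator. Write the stability (index) form as
\[
Q(u)=\int_\Sigma |\nabla u|^2-\big(|A|^2+\mathrm{Ric}_M(N,N)\big)u^2\,dV,
\]
let $L=\Delta+|A|^2+\mathrm{Ric}_M(N,N)$ be the associated Jacobi operator, and let $E_-$ be its negative eigenspace, so $\dim E_-=\mathrm{index}(\Sigma)$; by Hodge theory $b_1(\Sigma)=\dim\mathcal H^1(\Sigma)$. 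First I would fix an $\mathrm{Ad}$-invariant inner product on $\mathfrak g=\mathrm{Lie}\,I_0(M)$ and an orthonormal basis $X_1,\dots,X_d$, $d=\dim I(M)=\dim\mathfrak g$, producing Killing fields on $M$. Two structural facts drive the computation: the homogeneity identity, by which $\sum_a X_a\otimes X_a$ is a parallel, $\mathrm{Ad}(K)$-invariant tensor and hence, by Schur's lemma on the isotropy representation, a controlled multiple of the metric on each irreducible factor; and the symmetric-space curvature formula $R(X,Y)Z=-[[X,Y],Z]$, which expresses $\mathrm{Ric}_M(N,N)$ and the curvature terms in $Q$ entirely through the $X_a$ and their brackets.

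Next I would construct, for each $\omega\in\mathcal H^1(\Sigma)$, a finite family of test functions built from $\omega$ and the restrictions of the $X_a$ to $\Sigma$, decomposed as $X_a=X_a^T+f_aN$ with $f_a=\langle X_a,N\rangle$. Because the flow of each $X_a$ carries $\Sigma$ to isometric, hence minimal, hypersurfaces, the normal components $f_a$ are Jacobi fields ($Lf_a=0$); these lie in $\ker L\subset E_-^\perp$ and must be kept out of the test family, which is instead assembled from the tangential pairings $\omega(X_a^T)=\langle\omega^\sharp,X_a\rangle$ and their bilinear combinations. The analytic heart of the proof is an integrated identity: summing $Q$ over the whole test family and invoking the Bochner--Weitzenb\"ock formula $\nabla^*\nabla\omega=-\mathrm{Ric}_\Sigma(\omega^\sharp)^\flat$ for the harmonic form, the Gauss equation rewriting $\mathrm{Ric}_\Sigma$ through $\mathrm{Ric}_M$ and $A$, minimality ($\mathrm{tr}\,A=0$), and the two identities above, the second-order and curvature contributions should telescope into
\[
\sum_{\mathrm{test}}Q(u)\ \le\ -c\int_\Sigma|\omega|^2\,dV\ <\ 0,\qquad \omega\neq0 .
\]
It is here that semi-simplicity is used, ensuring positivity of the relevant curvature average so that the right-hand side has the correct sign.

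To convert this into the Betti estimate I would use the Ambrozio--Carlotto--Sharp counting lemma. Projecting each of the $K$ test functions onto $E_-$ defines a linear map $\Lambda\colon\mathcal H^1(\Sigma)\to E_-^{\oplus K}$; the displayed negativity forces $\Lambda$ to be injective, since $\Lambda\omega=0$ would put every test function in the nonnegative cone $E_-^\perp$ and give $\sum_{\mathrm{test}}Q(u)\ge0$, contradicting the estimate for $\omega\neq0$. Hence $b_1(\Sigma)\le K\cdot\mathrm{index}(\Sigma)$. The stated constant is exactly the bookkeeping $K=\tfrac{d(d-1)}{2}+(2n-3)=\binom d2+\big(2\dim\Sigma-1\big)$: the $\binom d2$ counts the antisymmetric bilinear combinations of the pairs $(X_a,X_b)$ needed to see the full curvature operator of $M$, while the $2\dim\Sigma-1$ records the remaining tangent/normal frame data of the embedding $\Sigma\hookrightarrow M$ after the Jacobi directions $f_a$ have been removed. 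Enlarging the test family from the naive $d$ single-field functions to these $K$ combinations is precisely what makes the key estimate hold for an arbitrary semi-simple symmetric space, at the cost of the larger denominator.

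The hypothesis that $\Sigma$ is unstable enters at exactly this point: it guarantees $E_-\neq0$, so that the negativity in the key estimate is not vacuous and the injectivity of $\Lambda$ yields a genuine bound; for stable $\Sigma$ the method is silent, consistent with the existence of stable minimal hypersurfaces carrying topology. I expect the main obstacle to be the integrated identity of the second step --- organizing the Weitzenb\"ock and symmetric-space curvature terms so that they collapse to a clean negative multiple of $\int_\Sigma|\omega|^2$ --- together with the combinatorial task of choosing the enlarged test family so that the dimension count comes out to $\binom d2+2\dim\Sigma-1$ and no larger, which is what pins down the precise form of the constant.
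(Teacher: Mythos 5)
Your overall framework (Killing fields of $G=I_0(M)$, test sections built from $\langle \nu\wedge\omega^{\sharp}, X_a^{\dagger}\wedge X_b^{\dagger}\rangle$ on $\wedge^{2}\fg$, and the counting lemma) matches the paper, but the analytic heart of your argument is wrong in a way that breaks the proof. You claim the summed index form satisfies $\sum_{\mathrm{test}}Q(u)\le -c\int_{\Sigma}|\omega|^{2}<0$. In fact, for $M$ itself a compact semi-simple symmetric space the trace is \emph{identically zero}: the trace formula \eqref{eq:tr2} is expressed entirely in terms of the second fundamental form $B$ of $M\to N$, which vanishes when $M=N$ (Proposition \ref{prop:key2}). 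There is no strictly negative curvature average to extract; semi-simplicity buys you only ${\rm Tr}_{\wedge^{2}\fg}q_{\omega^{\sharp}}=0$. Consequently the injectivity argument for your map $\Lambda$ fails at exactly the forms $\omega$ for which every test function lies in the kernel of the Jacobi operator, and the counting lemma yields only an \emph{affine} bound ${\rm index}(\Sigma)\ge \frac{2}{d(d-1)}\bigl(b_{1}(\Sigma)-\dim\mathcal{H}_{0}\bigr)$, where $\mathcal{H}_{0}$ is the space of harmonic $1$-forms all of whose associated test sections are Jacobi fields.

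The missing idea is the estimate $\dim\mathcal{H}_{0}\le 2n-3$. This is where all the real work of the paper goes: one computes $\mathcal{J}(\varphi_{\omega^{\sharp},X\wedge Y})$ explicitly (Proposition \ref{prop:jac}) to show that $\omega\in\mathcal{H}_{0}$ forces $A(\nabla_{X}\omega^{\sharp},Y)=A(\nabla_{Y}\omega^{\sharp},X)$, i.e.\ $[S_{\nu},\nabla\omega^{\sharp}]=0$; one then simultaneously diagonalizes $S_{\nu}$ and $\nabla\omega^{\sharp}$ by a smooth local frame (which requires the technical Proposition \ref{prop:key5}, since no assumption of distinct principal curvatures is made), and shows via a linear ODE system along geodesics plus unique continuation for harmonic forms that such an $\omega$ is determined by $2n-3$ numbers at one point. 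Your bookkeeping attributes the $2n-3$ to an enlargement of the test family; it is not --- the test space is exactly $\wedge^{2}\fg$ of dimension $d(d-1)/2$, and the $2n-3$ is the bound on $\dim\mathcal{H}_{0}$. Relatedly, instability is not merely needed to make the estimate ``non-vacuous'': it is used quantitatively, via ${\rm index}(\Sigma)\ge 1\Rightarrow {\rm index}(\Sigma)\ge({\rm index}(\Sigma)+CD)/(1+CD)$, to convert the affine bound into the linear bound with denominator $d(d-1)+2(2n-3)$. (If your strict negativity held, instability would be unnecessary and you would have proved the full conjecture, contradicting the paper's own caveat about stable one-sided hypersurfaces.) Finally, the theorem allows one-sided $\Sigma$, for which your test sections are not globally defined; the paper handles this by passing to the two-sided double cover, a step absent from your proposal.
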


An embedded hypersurface $\Sigma$ is called {\it two-sided} if the normal bundle is trivial in $M$ (otherwise, $\Sigma$ is called {\it one-sided}). It follows from the second variational formula (cf. \cite{Simons}) that any two-sided minimal hypersurface $\Sigma$ is unstable if $M$ has positive Ricci curvature. Therefore, we obtain the following corollary.

\begin{corollary}\label{cor:main1}
The inequality \eqref{eq:main0} holds  for every two-sided closed minimal hypersurface $\Sigma$ embedded in  a compact semi-simple Riemannian symmetric space.
\end{corollary}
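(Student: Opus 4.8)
The plan is to derive the corollary as a direct consequence of Theorem \ref{thm:main0}, by checking that its instability hypothesis holds automatically in the two-sided case. Since a compact semi-simple Riemannian symmetric space carries positive Ricci curvature, everything reduces to the classical fact that a two-sided closed minimal hypersurface in an ambient manifold of positive Ricci curvature cannot be stable.

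First I would write down the second variation of the volume functional. For a two-sided minimal hypersurface $\Sigma$ with globally defined unit normal $\nu$, two-sidedness ensures that the normal bundle is trivial, so a normal variation is encoded by a single function $f\in C^\infty(\Sigma)$ via $f\nu$, and the associated index form reads
\[
Q(f,f)=\int_\Sigma\left(|\nabla f|^2-\left(|A|^2+\mathrm{Ric}(\nu,\nu)\right)f^2\right)dV,
\]
where $A$ is the second fundamental form of $\Sigma$ and $\mathrm{Ric}$ denotes the Ricci curvature of $M$.

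The key step is then to test $Q$ against the constant function $f\equiv 1$, which is a legitimate global variation precisely because the normal bundle is trivial. Since $\nabla 1=0$, this gives
\[
Q(1,1)=-\int_\Sigma\left(|A|^2+\mathrm{Ric}(\nu,\nu)\right)dV<0,
\]
using $|A|^2\geq 0$ together with $\mathrm{Ric}(\nu,\nu)>0$. Hence ${\rm index}(\Sigma)\geq 1$, so $\Sigma$ is unstable, and Theorem \ref{thm:main0} immediately yields \eqref{eq:main0}.

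I do not expect any real obstacle here, since the substance of the corollary is carried entirely by the main theorem and the only remaining input is the standard instability computation above. The one point deserving attention is the role of the two-sidedness assumption: it is exactly what allows the constant function to serve as a test function, whereas for a one-sided hypersurface $f\equiv 1$ would fail to define a global section of the nontrivial normal bundle. This is precisely why Theorem \ref{thm:main0} is formulated under the instability assumption rather than under two-sidedness.
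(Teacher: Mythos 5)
Your proposal is correct and follows exactly the route the paper takes: the paper derives the corollary from Theorem \ref{thm:main0} by invoking the classical fact (citing Simons' second variation formula) that a two-sided closed minimal hypersurface in a manifold of positive Ricci curvature is unstable, which is precisely the computation with the test function $f\equiv 1$ that you spell out. Your additional remark on why two-sidedness is needed for the constant test function is accurate and consistent with the paper's discussion of the one-sided case.
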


If $M$ is simply-connected, then any embedded hypersurface in $M$ is two-sided. Thus,  we obtain 

\begin{corollary}
The conjecture by Marques-Neves-Schoen holds for any simply-connected compact semi-simple Riemannian symmetric space. 
\end{corollary}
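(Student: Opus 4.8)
The plan is to obtain this corollary as a direct consequence of Corollary \ref{cor:main1}, by supplying the two structural facts that make the hypotheses of the Marques--Neves--Schoen conjecture genuinely applicable to a simply-connected compact semi-simple RSS $M$. First I would recall that such an $M$ is Einstein with positive Einstein constant, so in particular $\mathrm{Ric}>0$; this places $M$ squarely among the ambient manifolds for which the inequality \eqref{eq:SMN} is even formulated. The remaining task is then to upgrade the two-sided conclusion of Corollary \ref{cor:main1} to a statement about \emph{all} closed embedded minimal hypersurfaces, which is where simple-connectedness enters.

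Next I would verify the two-sidedness claim recorded in the remark above. The normal bundle of a closed embedded hypersurface $\Sigma\subset M$ is a real line bundle, and $\Sigma$ is two-sided exactly when this bundle is trivial, equivalently when $[\Sigma]=0$ in $H_{n-1}(M;\mathbb{Z}_2)$. Since $M$ is simply-connected we have $H_1(M;\mathbb{Z}_2)=0$, and Poincar\'e duality together with the universal coefficient theorem gives
\[
H_{n-1}(M;\mathbb{Z}_2)\cong H^1(M;\mathbb{Z}_2)\cong \mathrm{Hom}\bigl(H_1(M;\mathbb{Z}_2),\mathbb{Z}_2\bigr)=0.
\]
Hence $[\Sigma]=0$, so $\Sigma$ separates $M$ and is therefore two-sided. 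This shows that every closed embedded hypersurface in $M$ is automatically two-sided.

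Finally I would assemble the pieces. Given an arbitrary closed embedded minimal hypersurface $\Sigma$ in $M$, the previous step makes it two-sided, so Corollary \ref{cor:main1} yields \eqref{eq:main0}. Setting
\[
C := \frac{2}{d(d-1)+2(2n-3)}, \qquad d=\dim I(M),\quad n=\dim M,
\]
this is precisely the inequality $\mathrm{index}(\Sigma)\ge C\,b_1(\Sigma)$ of \eqref{eq:SMN}, and $C$ is a fixed positive constant determined solely by $M$. As $\Sigma$ was arbitrary, the Marques--Neves--Schoen conjecture holds for $M$. The nearest thing to an obstacle here is purely bookkeeping: one must confirm both that $C$ depends on nothing beyond the two invariants $\dim I(M)$ and $\dim M$ of the ambient space, and that two-sidedness is genuinely forced rather than assumed. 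Both points are settled by the topological computation above, so no genuine difficulty arises and the corollary follows at once.
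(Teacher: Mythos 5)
Your proposal is correct and follows exactly the paper's route: the paper simply asserts that simple-connectedness forces every embedded hypersurface to be two-sided and then invokes Corollary \ref{cor:main1}, while you additionally supply the standard homological justification of that two-sidedness claim. No gap; the extra verification you carry out is sound and merely makes explicit what the paper leaves as a known fact.
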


We shall make some remarks on our results. Firstly, in Theorem \ref{thm:main0}, we proved an index bound  under the assumption that $\Sigma$ is unstable (Note that Theorem \ref{thm:main0} holds even if $\Sigma$ is one-sided). Therefore in a compact semi-simple RSS, the problem is still remaining for stable minimal hypersurfaces.  Notice that  a stable minimal hypersurface in a compact semi-simple RSS $M$ can exist only when $M$ is not simply-connected and $\Sigma$ is one-sided in $M$. Moreover, the inequality \eqref{eq:SMN} suggests that $b_{1}(\Sigma)$ must be $0$ if $\Sigma$ is stable. For example, the totally geodesic $\R P^{n-1}$ is stable in $\R P^{n}$, and when $M=\R P^n$, the inequality \eqref{eq:SMN} has been proved for any embedded closed minimal hypersurfaces (\cite{ACS}). Secondly, we remark that in  (2),  the constant depending on $M$   is not necessarily better than the previous results given in \cite{ACS, GMR, MR, Savo}. For example, the result by Ambrozio-Carlotto-Sharp \cite{ACS} provides a much better constant when $M$ is a compact rank-one symmetric space, and Mendes-Radeschi \cite{MR} proved more precise result when $\Sigma$ has a point $p$ such that the principal curvatures at $p$ are distinct.


The method by Ambrozio-Carlotto-Sharp \cite{ACS} is based on a kind of ``averaging method", which goes back to the method used in \cite{LS, Simons}. More precisely, the method of \cite{ACS} is summarized as follows. Let $\Sigma$ be a closed minimal hypersurface in a Riemannain manifold $M$.  First, we generate a test normal variation $\Phi_{\omega}(v)$ of  $\Sigma$ in $M$ by combining a harmonic $1$-form $\omega$ and  a vector $v\in \mathcal{G}$ of a finite dimensional metric vector space $\mathcal{G}$, namely, we take a bilinear map
\[
\Phi: \mathcal{H}\times\mathcal{G}\to \Gamma(\nu\Sigma),
\]
where $\mathcal{H}$ is the space of harmonic $1$-forms on $\Sigma$ and $\Gamma(\nu\Sigma)$ denotes the space of normal vector fields along $\Sigma$ in $M$. Next, we consider a quadratic form $q_{\omega}(v,w):=Q(\Phi_{\omega}(v),\Phi_{\omega}(w))$ on  $\mathcal{G}$, where $Q$ is the index form of the minimal hypersurface $\Sigma$ in $M$. Then it turns out that a sufficient condition for the inequality $\eqref{eq:SMN}$ is given by ${\rm Tr}q_{\omega}<0$ for any non-trivial $\omega\in \mathcal{H}$ (see Section \ref{sec:ind} for details). A problem here is how to define a preferred test variation $\Phi$ in order to estimate the trace of $q_{\omega}$. In \cite{ACS}, two reasonable choices of  test variations are presented by using an isometric immersion of the ambient Riemannian manifold $M^{n}$ into the Euclidean space $\R^{n+k}$. The first method is inspired by the idea of Ros \cite{Ros} and Urbano \cite{Urbano},  and this is applicable to a minimal surface in a $3$-dimensional Riemannian manifold (see \cite[Proposition 4]{ACS}). Another choice is a generalization of the method by Savo \cite{Savo}, and this can be applied to a minimal hypersurface of any dimension (\cite[Theorem A]{ACS}). In any case, an advantage of these methods is that the trace formula for $q_{\omega}$ is derived in a better form so that ${\rm Tr}q_{\omega}$ can be estimated by using  geometry of the isometric immersion $M\to \R^{n+k}$ (See \cite{ACS, GMR} or Appendix for details). 
Thus, if furthermore one can find a preferred isometric immersion of $M$ into $\R^{n+k}$ so that ${\rm Tr}q_{\omega}<0$ for any non-trivial $\omega\in \mathcal{H}$ on any $\Sigma\subset M$, we obtain the index estimate \eqref{eq:SMN} for any $\Sigma\subset M$. However, it should be emphasized that whether this method works depends on the choice of isometric immersion $M\to \R^{n+k}$.

In the present paper, we show that the methods in \cite{ACS} are naturally extended by considering an isometric immersion of a Riemannian manifold $M$ into a compact semi-simple RSS $N=G/K$. Indeed, we can define the  test  variation $\Phi$ and a quadratic form $q_{\omega}$ by a similar way via the isometric immersion $M\to N=G/K$ (See Section \ref{sec:tr1} and \ref{sec:tr2}).  Moreover, we obtain generalized trace formulas (Proposition \ref{prop:tr1} and Theorem \ref{thm:tr}) as extensions of formulas given in \cite[Proposition 1 and 2]{ACS}. 
We remark that if $M=N=G/K$, it turns out that the test variation given in Section \ref{sec:tr2} coincides  with the one defined  in \cite{MR}  by using the virtual immersion (See Appendix \ref{A:MR} for the correspondence). However, our approach as well as our description of the trace formula are different from those in \cite{MR}.

 Similar to  \cite{ACS}, the trace can be estimated using  geometry of $M\to N=G/K$. 
Hence,  we could obtain the estimate \eqref{eq:SMN} for  $\Sigma\subset M$ if there is an isometric immersion $M\to N=G/K$ satisfying that ${\rm Tr}q_{\omega}<0$ for any non-trivial $\omega\in \mathcal{H}$ on $\Sigma$ (see Proposition \ref{prop:meth1} and  \ref{prop:meth2}).
As a simple application of our results, we prove that the inequality \eqref{eq:SMN} holds for any closed minimal surface in a Berger sphere $S^{3}$ or more generally, for any closed minimal hypersurface in the geodesic hypersphere $S^{n}_{r}$ of some radius $r$ of the complex/quaternionic/octornionic projective space,  by using an isometric embedding of $S^{n}_{r}$ to the projective space  (see Theorem \ref{thm:a1} and \ref{thm:a2} for more precise results).

Moreover, when $M$ is itself a compact semi-simple RSS, by taking the trivial isometric immersion, i.e. the identity map ${id}: M\to M$, we show that  the trace of $q_{\omega}$ with respect to a certain test variation $\Phi$ is always equal to $0$ for any $\omega\in \mathcal{H}$ on any two-sided minimal hypersurface $\Sigma \subset M$ (Proposition \ref{prop:key2}. Note that the same conclusion was also obtained in \cite{MR} by using the virtual immersion). This remarkable fact is being a key of the proof of Theorem.\ref{thm:main0}.
In fact, when ${\rm Tr}q_{\omega}=0$ for any $\omega\in \mathcal{H}$,  we obtain an affine bound ${\rm index}(\Sigma)\geq C(b_{1}(\Sigma)-D)$ for some constant $C, D$ depending only on  $M$ (see Lemma \ref{lem:key} and Proposition \ref{prop:key4}), and if furthermore $\Sigma$ is unstable (i.e. ${\rm index}(\Sigma)\geq 1$),  the affine bound implies the required linear bound ${\rm index}(\Sigma)\geq C'b_{1}(\Sigma)$ for some constant $C'$ determined by $C$ and $D$. We remark that this argument has been used in \cite{ACS2} in the case when $N$ is a flat torus and in \cite{MR} by using the virtual immersion,  however, in both papers, they supposed an additional assumption that there exists a point $p\in \Sigma$ such that the principal curvatures at $p$ are distinct. In this paper, we shall improve their argument by using a non-trivial extension of some formulas, and  prove the inequality (2)  without the assumption of principal curvatures. Actually, our argument can be applied to the case when $N$ is a flat torus as well, and the result given in \cite[Theorem 1]{ACS2} is generalized to any closed minimal hypersurface in the flat torus.

The paper is organized as follows. In Section \ref{sec:ind}, we summarize our method of index estimate in a general setting based on the argument given in \cite{ACS, MR}. In Section \ref{sec:tr1}, by considering an isometric immersion of $M$ into a compact semi-simple RSS $N$, we derive the first trace formula as an extension of the formula given in \cite[Proposition 1]{ACS}. By using this, we prove that the inequality \eqref{eq:SMN} holds for any closed minimal surface embedded in a family of Berger spheres. In Section \ref{sec:tr2}, extending the second method given in \cite{ACS}, we prove another trace formula via the isometric immersion $M\to N$. Finally, we prove Theorem \ref{thm:main0}  in Section \ref{sec:pf}  by improving the argument  given in \cite{ACS2, MR}. 

\section{Index and the method of estimate}\label{sec:ind}

\subsection{Second variation of minimal hypersurfaces}
Let $M^{n}$ be an $n$-dimensional Riemannian manifold and  $\Sigma$ a closed $(n-1)$-dimensional manifold. We consider an isometric immersion $f: \Sigma\to M$. We identify the tangent space $T_{p}\Sigma$ with a subspace of $T_{f(p)}M$ via the differential map $df_{p}: T_{p}\Sigma\to T_{f(p)}M$.  We denote the orthogonal complement of  $T_{p}\Sigma$ in $T_{f(p)}M$ by $\nu_{p}\Sigma$, and we call $\nu_{p}\Sigma$ a normal space of $f$ at $p$. In our setting, $\nu_{p}\Sigma$ is a $1$-dimensional subspace. We denote the normal bundle of $f$ by $\nu\Sigma$.

We denote the Levi-Civita connection of $M$ by $\nabla^{M}$. The {\it second fundamental form} $A$ of $f$ is defined by $A(X,Y):=(\nabla^{M}_{X}Y)^{\perp}$ for $X,Y\in \Gamma(T\Sigma)$, where $\perp$ means the orthogonal projection to the normal space. The second fundamental form is a symmetric tensor field on $\Sigma$. The mean curvature vector $H$ of $f$ is defined by $H:=\sum_{\mu=1}^{n-1}A(e_{\mu},e_{\mu})$, where $\{e_{\mu}\}_{\mu=1}^{n-1}$ is an orthonormal basis of $T_{p}\Sigma$. We say $f$ is  {\it minimal} if $f$ is a critical point of the volume functional, or equivalently, 
the mean curvature vector $H$ is vanishing along $f$. In the following, we always assume $f$ is a minimal immersion of $(n-1)$-dimensional closed manifold $\Sigma$. When $f$ is an embedding, we call $\Sigma$ a {\it minimal hypersurface} in $M$.

The second variation of the volume functional for a normal variation $\{f_{s}\}_{s\in (-\epsilon,\epsilon)}$ of a minimal immersion $f=f_{0}$ is given by (cf. \cite{Simons})
\begin{align*}
\frac{d^{2}}{ds^{2}}{\rm Vol}(f_{s})\Big{|}_{s=0}=\int_{\Sigma} |\nabla^{\perp}V|^{2}-{\rm Ric}^{M}(V,V)-|A|^{2}|V|^{2}\,d\mu_{\Sigma},
\end{align*}
where  $V:=\frac{df_{s}}{ds}|_{s=0}\in \Gamma(\nu\Sigma)$, $\nabla^{\perp}$ denotes the normal connection and ${\rm Ric}^{M}$ is the Ricci tensor of $M$. We define a quadratic form on $\Gamma(\nu\Sigma)$ by
\begin{align*}
Q(V,W)=\int_{\Sigma} \langle \nabla^{\perp}V,  \nabla^{\perp}W\rangle -{\rm Ric}^{M}(V,W)-|A|^{2}\langle V,W\rangle\,d\mu_{\Sigma}\quad \textup{for $V,W\in \Gamma(\nu\Sigma)$},
\end{align*}
and $Q$ is called the {\it index form} of $f$.
A minimal immersion $f$ is said to be {\it stable} if $Q(V,V)\geq 0$ for any normal vector field $V$, otherwise we say {\it unstable}.

\begin{remark}
{\rm 
As mentioned in \cite{ACS}, we can deal with a non-orientable hypersurface. If this is the case, the symbol $d\mu_{\Sigma}$ denotes the {\it Riemannian density} of $\Sigma$ (See \cite{Lee} for details). Since the divergence theorem holds even for Riemannian densities, the computation presented in this paper works just as well as in the orientable case.
}
\end{remark}

By using the divergence theorem, we may write
\[
Q(V,W)=-\int_{\Sigma} \langle \Delta^{\perp}V+R^{M}(V)^{\perp}+|A|^{2}V,W\rangle d\mu_{\Sigma},
\]
where $\Delta^{\perp}$ is the Laplacian of $\nabla^{\perp}$ and $R^{M}(V):={\rm Tr}R^{M}(V,\cdot)\cdot$, where $R^{M}$ is the curvature tensor of $M$ defined by $R^{M}(X,Y)Z=\nabla^{M}_{X}\nabla^{M}_{Y}Z-\nabla^{M}_{Y}\nabla^{M}_{X}Z-\nabla^{M}_{[X,Y]}Z$. We put 
\[
\mathcal{J}(V):=\Delta^{\perp}V+R^{M}(V)^{\perp}+|A|^{2}V.
\]
It is known that $\mathcal{J}$ is a self-adjoint elliptic linear operator acting on $\Gamma(\nu \Sigma)$ and it has discrete eigenvalues $\lambda_{1}\leq \lambda_{2}\leq \cdots \leq \lambda_{k}\leq \cdots \to \infty$ (Note that the eigenvalue is defined by $\mathcal{J}(V)=-\lambda V$).  Then,  the {\it index} of $f$ is given by the dimension of the eigenspaces of negative eigenvalues. Also, the {\it nullity}  is defined by the dimension of $0$-eigenspaces.  We denote the index (resp. nullity) of $f: \Sigma\to M$ by ${\rm index}(\Sigma)$ (resp. ${\rm nullity}(\Sigma)$). 

An embedded hypersurface $\Sigma$ is called {\it two-sided} if the normal bundle $\nu \Sigma$ is trivial, i.e. there exists a smooth unit normal vector field $\nu$ along $\Sigma$. Otherwise, we say $\Sigma$  {\it one-sided}. If the ambient space $M$ is oriented, then $\Sigma$ is two-sided if and only if $\Sigma$ is oriented.  In general, if the normal bundle $\nu\Sigma$ of a minimal immersion $f: \Sigma^{n-1}\to M^{n}$ is trivial, then a normal vector field $V$ is written as $V=\varphi\cdot \nu$ for some $\varphi\in C^{\infty}(\Sigma)$. Thus, the second variation becomes 
\begin{align*}
&Q(\varphi,\psi)=-\int_{\Sigma} \langle \mathcal{J}(\varphi),\psi\rangle\,d\mu_{\Sigma},\quad \varphi,\psi\in C^{\infty}(\Sigma),\\
{\rm where}\quad & \mathcal{J}(\varphi):=\Delta\varphi+{\rm Ric}^{M}(\nu,\nu)\varphi+|A|^{2}\varphi.
\end{align*}
 If this is the case, the index (resp. nullity) of $f$ coincides with the dimension of eigenspaces of negative eigenvalues (resp. $0$-eigenvalue) of $\mathcal{J}$ acting on $C^{\infty}(\Sigma)$.

\subsection{Method of index estimate}

The following lemma is a generalization of argument used in \cite{ACS,MR}.

\begin{lemma}\label{lem:key}
Suppose $f: \Sigma^{n-1}\to M^{n}$ is a minimal immersion. Let  $\mathcal{G}$ be a finite dimensional vector space equipped with a positive-definite inner product $\langle,\rangle_{\mathcal{G}}$ and suppose that there is a bilinear map
\[
\Phi: \Gamma(T\Sigma)\times\mathcal{G}\to \Gamma(\nu\Sigma)
\]
so that $\Phi$ defines a quadratic form $q_{T}$ on $\mathcal{G}$ for each $T\in \Gamma(T\Sigma)$;
\[
q_{T}(v,w):=Q(\Phi_{T,v},\Phi_{T,w})=-\int_{\Sigma}\langle \mathcal{J}(\Phi_{T,v}), \Phi_{T,w}\rangle\,d\mu_{\Sigma},
\]
where $\Phi_{T,v}=\Phi(T,v)$. We also define another quadratic form  $L_{T}$ on $\mathcal{G}$  by 
\[
L_T(v,w):=\langle \Phi_{T,v},\Phi_{T,w}\rangle_{L^2(\Sigma)}=\int_{\Sigma}\langle\Phi_{T,v},\Phi_{T,w}\rangle\,d\mu_{\Sigma}.
\]
Then we have
\begin{enumerate}
\item If there exists a finite dimensional linear subspace $S\subset \Gamma(T\Sigma)$ and a constant $c\in \R$ such that 
$
{\rm Tr}_{\mathcal{G}}q_{T}<c\cdot {\rm Tr}_{\mathcal{G}}L_{T}
$
for any non-trivial element $T\in S$,
then
\[
\sharp\{\textup{eigenvalues of $\mathcal{J}$ less than $c$}\}\geq \frac{{\rm dim}S}{{\rm dim}\mathcal{G}}.
\]
\item If there exists a finite dimensional linear subspace $S\subset \Gamma(T\Sigma)$ such that
$
{\rm Tr}_{\mathcal{G}}q_{T}\leq 0
$
for any non-trivial element $T\in S$,
 then 
\[
{\rm index}(\Sigma)\geq \frac{1}{{\rm dim}\mathcal{G}}({\rm dim}S-{\rm dim}S_{0}),
\]
where 
\[
S_{0}:=\{T\in S\mid \mathcal{J}(\Phi_{T,v})\equiv0\ \forall v\in \mathcal{G}\}.
\]

\end{enumerate}

\end{lemma}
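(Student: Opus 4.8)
The plan is to run the standard spectral ``pigeonhole'' argument, applying the variational characterisation of the eigenvalues of $\mathcal{J}$ to the test sections produced by $\Phi$. Fix an $L^2$-orthonormal basis $\{\phi_i\}_{i\geq 1}$ of $\Gamma(\nu\Sigma)$ consisting of eigensections of $\mathcal{J}$, with $\mathcal{J}\phi_i=-\lambda_i\phi_i$ and $\lambda_1\leq\lambda_2\leq\cdots\to\infty$, and fix an orthonormal basis $\{e_1,\dots,e_m\}$ of $(\mathcal{G},\langle,\rangle_{\mathcal{G}})$, where $m=\dim\mathcal{G}$. The trace of a quadratic form on $\mathcal{G}$ is then the sum of its values on the $e_j$, so ${\rm Tr}_{\mathcal{G}}q_T=\sum_{j=1}^m Q(\Phi_{T,e_j},\Phi_{T,e_j})$ and ${\rm Tr}_{\mathcal{G}}L_T=\sum_{j=1}^m\|\Phi_{T,e_j}\|_{L^2}^2$. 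The elementary fact I will use repeatedly is that for any $V=\sum_i a_i\phi_i$ one has $Q(V,V)=\sum_i\lambda_i a_i^2$, so that if $V$ is $L^2$-orthogonal to the span of all eigensections with $\lambda_i<\mu$, then $Q(V,V)\geq\mu\|V\|_{L^2}^2$.

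For part (i), let $E_{<c}$ be the span of all eigensections with $\lambda_i<c$ and put $k=\dim E_{<c}$, the number of eigenvalues below $c$ counted with multiplicity. I consider the linear map $P\colon S\to (E_{<c})^{\oplus m}$ sending $T$ to the tuple of $L^2$-orthogonal projections $(\pi_{<c}\Phi_{T,e_1},\dots,\pi_{<c}\Phi_{T,e_m})$; this is linear in $T$ because $\Phi$ is bilinear and $\pi_{<c}$ is linear. If $\dim S>m\,k$, then $\ker P\neq\{0\}$, so there is a nontrivial $T\in S$ for which every $\Phi_{T,e_j}$ is orthogonal to $E_{<c}$. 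The fact recalled above (with $\mu=c$) gives $Q(\Phi_{T,e_j},\Phi_{T,e_j})\geq c\,\|\Phi_{T,e_j}\|_{L^2}^2$ for each $j$; summing over $j$ yields ${\rm Tr}_{\mathcal{G}}q_T\geq c\,{\rm Tr}_{\mathcal{G}}L_T$, contradicting the hypothesis on nontrivial elements of $S$. Hence $\dim S\leq m\,k$, equivalently $k\geq\dim S/\dim\mathcal{G}$, as claimed.

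Part (ii) is the same mechanism refined to isolate the null directions. Write $E_-$ for the negative eigenspace (so $\dim E_-={\rm index}(\Sigma)$) and $E_0$ for the kernel of $\mathcal{J}$, and consider the analogous projection $P_-\colon S\to (E_-)^{\oplus m}$, $T\mapsto(\pi_-\Phi_{T,e_1},\dots,\pi_-\Phi_{T,e_m})$. By rank--nullity it suffices to show $\ker P_-\subseteq S_0$, since then $\dim S-\dim S_0\leq\dim S-\dim\ker P_-=\mathrm{rank}\,P_-\leq m\cdot{\rm index}(\Sigma)$. So suppose $T\in\ker P_-$, i.e.\ each $\Phi_{T,e_j}=\sum_i a_{ij}\phi_i$ lies in the span of eigensections with $\lambda_i\geq 0$; then each $Q(\Phi_{T,e_j},\Phi_{T,e_j})=\sum_{\lambda_i\geq0}\lambda_i a_{ij}^2\geq 0$. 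The hypothesis ${\rm Tr}_{\mathcal{G}}q_T\leq 0$ forces the sum of these non-negative terms to vanish, hence each one is zero, which in turn forces $a_{ij}=0$ whenever $\lambda_i>0$. Thus every $\Phi_{T,e_j}$—and, by linearity of $\Phi$ in its second slot, every $\Phi_{T,v}$—lies in $E_0$, so $\mathcal{J}(\Phi_{T,v})\equiv 0$ and $T\in S_0$.

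I expect the only genuine subtlety to lie in part (ii): one must use that $q_T\leq 0$ (rather than strictly negative) together with the non-negativity of each diagonal term is precisely what pins the test sections into the zero-eigenspace, so that $\ker P_-$ is controlled by $S_0$ rather than merely by $E_-\oplus E_0$. Everything else is bookkeeping with the spectral decomposition and the bilinearity of $\Phi$, and the passage between ``for all basis vectors $e_j$'' and ``for all $v\in\mathcal{G}$'' is justified throughout by the linearity of $\Phi$ in its second argument and of the projections.
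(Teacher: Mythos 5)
Your proposal is correct and follows essentially the same argument as the paper: the same pigeonhole map from $S$ into $m$ copies of the low-eigenvalue subspace, the same contradiction for (i), and the same ``nonnegative terms summing to zero'' step for (ii). The only cosmetic difference is that in (ii) you bound $\dim S-\dim S_0$ via rank--nullity applied to $P_-$ (using $\ker P_-\subseteq S_0$), whereas the paper restricts the same map to an orthogonal complement $S_0^{\perp}$ and argues by contradiction; the two formulations are equivalent.
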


\begin{proof}
We denote the eigenvalue of $\mathcal{J}$ by  $\lambda_1\leq \lambda_2\leq \cdots \leq \lambda_{r}\leq \cdots\to \infty$.  Fix a constant $c\in \R$ and consider the set of eigenvalues $\{\lambda_{k}\}_{k=1}^{l}$ satisfying that $\lambda_{k}<c$. Then, $l=\sharp\{\textup{eigenvalues of $\mathcal{J}$ less than $c$}\}$. We take an $L^{2}$-basis $\{V_k\}_{k=1}^l$ of the eigenspaces of $\{\lambda_{k}\}_{k=1}^{l}$, and  we fix an orthonormal basis $\{\theta_{i}\}_{i=1}^{\delta}$ of $\mathcal{G}$, where $\delta={\rm dim} \mathcal{G}$.

(i) We define a linear map $\chi: S\to \R^{\delta\times l}$ by
\[
\chi(T):=\Big[\int_{\Sigma}\langle \Phi_{T,\theta_{i}},V_{k}\rangle\,d\mu_{\Sigma}\Big]_{i=1,\ldots,\delta, k=1\ldots, l},
\]
where the right hand side is regarded as a (column) vector in $\R^{\delta\times l}$.

Assume that ${\rm dim} S>\delta\times l$. Then there exists a non-trivial element $T_{0}\in S$ such that $\chi(T_{0})=0$. Namely,  $\Phi_{T_{0},\theta_{i}}$ is $L^{2}$-orthogonal to $V_{k}$ for any $i=1,\ldots,\delta$ and $k=1,\ldots ,l$, and hence, we obtain
\begin{align*}
Q(\Phi_{T_{0},\theta_{i}},\Phi_{T_{0},\theta_{i}})\geq \lambda_{l+1}\int_{\Sigma}|\Phi_{T_{0},\theta_{i}}|^{2}\,d\mu_{\Sigma}.
\end{align*}
Summing up for all $i=1,\ldots,\delta$, we have 
\begin{align*}
{\rm Tr}_{\mathcal{G}}q_{T_{0}}\geq \lambda_{l+1}{\rm Tr}_{\mathcal{G}}L_{T_0}.
\end{align*}
Note that we used the assumption that $\langle,\rangle_{\mathcal{G}}$ is positive-definite, i.e. the trace of a quadratic form $q$ is defined by ${\rm Tr}_{\mathcal{G}}q=\sum_{i=1}^{\delta}q(\theta_{i},\theta_{i})$.

However,  since we assume ${\rm Tr}_{\mathcal{G}}q_{T_{0}}<c\cdot {\rm Tr}_{\mathcal{G}}L_{T_0}\leq   \lambda_{l+1}{\rm Tr}_{\mathcal{G}}L_{T_0}$, this is a contradiction. Therefore, we obtain ${\rm dim} S\leq \delta\times l$ and this proves (i).

(ii)  We slightly modify the proof of (i). Let $S_{0}^{\perp}$ be the orthogonal complement of $S_{0}$ in $S\subset \Gamma(T\Sigma)$. Take an $L^{2}$-basis $\{V_{k}\}_{k=1}^{l}$ of {\it negative} eigenfunctions of $\mathcal{J}$ (namely we set $c=0$ in the proof of (i)) and let  $\chi: S\to \R^{\delta\times l}$ be the same map given in the proof of (i).
We consider the restricted map  $\chi'=\chi|_{S_{0}^{\perp}}: S_{0}^{\perp}\to {\R}^{\delta\times l}$. Assume that ${\rm dim}S_{0}^{\perp}>\delta\times l$. Then there exists a non-trivial element $T_{0}\in S_{0}^{\perp}$ so that $\chi'(T_{0})=0$. Then $\Phi_{T_{0},\theta_{i}}$ is $L^{2}$-orthogonal to any negative eigenvector, and hence, we obtain 
\begin{align}\label{eq:posi}
Q(\Phi_{T_{0},\theta_{i}},\Phi_{T_{0},\theta_{i}})\geq 0
\end{align}
for any $i$.
In particular, we have ${\rm Tr}_{\mathcal{G}}q_{T_{0}}\geq 0$ (as $\langle, \rangle_{\mathcal{G}}$ is positive-definite).  Since we assume ${\rm Tr}_{\mathcal{G}}q_{T_{0}}\leq 0$, we obtain ${\rm Tr}_{\mathcal{G}}q_{T_{0}}=0$.  By \eqref{eq:posi},
this implies 
\begin{align}\label{eq:nul}
Q(\Phi_{T_{0},\theta_{i}},\Phi_{T_{0},\theta_{i}})=-\int_{\Sigma}\langle \mathcal{J}(\Phi_{T_{0},\theta_{i}}),\Phi_{T_{0},\theta_{i}}\rangle=0
\end{align}
for any $i$. Because $\Phi_{T_{0},\theta_{i}}$ is $L^{2}$-orthogonal to any negative eigenvector, \eqref{eq:nul} implies $\mathcal{J}(\Phi_{T_{0},\theta_{i}})=0$ for any $i$, i.e. $T_{0}\in S_{0}$. This contradicts the facts that $T_{0}\in S_{0}^{\perp}$ and $T_{0}\neq 0$. Therefore, ${\rm dim}S_{0}^{\perp}\leq \delta\times l$. This proves (ii).
\end{proof}

As a special situation, we consider the set of harmonic $1$-forms  $\mathcal{H}$ on $\Sigma$ and denote  its metric dual by $\mathcal{H}^{*}\subset \Gamma(T\Sigma)$. Note that, by Hodge theory, we have ${\rm dim}\mathcal{H}^{*}=b_{1}(\Sigma)$, where $b_{1}(\Sigma)$ is the first Betti number of $\Sigma$.  By letting $S=\mathcal{H}^{*}$ in the previous lemma, we obtain the following result as an extension of \cite[Theorem A]{ACS} and \cite[Proposition 10]{MR}:

\begin{proposition}\label{prop:ind}
Let $f: \Sigma^{n-1}\to M^{n}$ be a minimal immersion. Suppose the same assumption given in Lemma {\rm \ref{lem:key}}.
\begin{enumerate}
\item 
 If ${\rm Tr}_{\mathcal{G}}q_{\omega^{\sharp}}<0$ (strictly negative) for any non-trivial $\omega\in \mathcal{H}$, then it holds that 
\begin{align*}
{\rm index}(\Sigma)\geq \frac{1}{{\rm dim}\mathcal{G}}b_{1}(\Sigma).
\end{align*}
\item 
If ${\rm Tr}_{\mathcal{G}}q_{\omega^\sharp}\leq 0$ (non-positive) for any non-trivial  $\omega\in \mathcal{H}$, then it holds that 
\begin{align*}
&{\rm index}(\Sigma)+{\rm nullity}(\Sigma)\geq  \frac{1}{{\rm dim}\mathcal{G}}b_{1}(\Sigma)\quad \textup{and}\\
&{\rm index}(\Sigma)\geq \frac{1}{{\rm dim}\mathcal{G}}(b_{1}(\Sigma)-{\rm dim}\mathcal{H}_{0}),
\end{align*}
where    $\mathcal{H}_{0}\subseteq \mathcal{H}$ is a linear subspace defined by
\[
\mathcal{H}_{0}:=\{\omega\in \mathcal{H}\mid \mathcal{J}(\Phi_{\omega^{\sharp},v})\equiv0\ \forall v\in \mathcal{G}\}.
\]
\end{enumerate}
\end{proposition}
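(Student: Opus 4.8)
The plan is to deduce all three inequalities directly from Lemma~\ref{lem:key} by specializing the test subspace to $S=\mathcal{H}^{*}$, the space of metric duals of harmonic $1$-forms. Since the Hodge decomposition gives $\dim\mathcal{H}^{*}=\dim\mathcal{H}=b_{1}(\Sigma)$, and since by definition ${\rm index}(\Sigma)=\sharp\{\lambda_{k}<0\}$ while ${\rm index}(\Sigma)+{\rm nullity}(\Sigma)=\sharp\{\lambda_{k}\leq 0\}$, each assertion becomes a counting statement about the spectrum of $\mathcal{J}$ tested against the image of $\Phi(\mathcal{H}^{*}\times\mathcal{G})$. For part (i), I would apply Lemma~\ref{lem:key}(i) with $c=0$: for this value the right-hand side $c\cdot{\rm Tr}_{\mathcal{G}}L_{T}$ vanishes identically, so the hypothesis ${\rm Tr}_{\mathcal{G}}q_{\omega^{\sharp}}<0$ for every non-trivial $\omega\in\mathcal{H}$ is exactly the input the lemma requires on $S=\mathcal{H}^{*}$, and its conclusion $\sharp\{\lambda_{k}<0\}\geq \dim S/\dim\mathcal{G}$ reads precisely ${\rm index}(\Sigma)\geq b_{1}(\Sigma)/\dim\mathcal{G}$.

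For the second inequality of part (ii), I would invoke Lemma~\ref{lem:key}(ii) with $S=\mathcal{H}^{*}$. The subspace $S_{0}=\{T\in\mathcal{H}^{*}\mid \mathcal{J}(\Phi_{T,v})\equiv 0\ \forall v\in\mathcal{G}\}$ is exactly the image of $\mathcal{H}_{0}$ under the musical isomorphism $\omega\mapsto\omega^{\sharp}$, so $\dim S_{0}=\dim\mathcal{H}_{0}$, and the lemma yields ${\rm index}(\Sigma)\geq(\dim S-\dim S_{0})/\dim\mathcal{G}=(b_{1}(\Sigma)-\dim\mathcal{H}_{0})/\dim\mathcal{G}$. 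The first inequality of part (ii) requires a variant of the counting argument in which one tests against the \emph{non-positive} eigenspace of $\mathcal{J}$: I would fix an $L^{2}$-orthonormal basis $\{V_{k}\}_{k=1}^{N}$ of the span of all eigenfunctions with $\lambda_{k}\leq 0$, where $N={\rm index}(\Sigma)+{\rm nullity}(\Sigma)$, and form the analogous map $\chi\colon\mathcal{H}^{*}\to\R^{\dim\mathcal{G}\times N}$. If $b_{1}(\Sigma)>\dim\mathcal{G}\cdot N$, one finds a non-trivial $T_{0}$ with $\chi(T_{0})=0$; then each $\Phi_{T_{0},\theta_{i}}$ lies in the strictly positive part of the spectrum, so $Q(\Phi_{T_{0},\theta_{i}},\Phi_{T_{0},\theta_{i}})\geq 0$ with equality only if $\Phi_{T_{0},\theta_{i}}=0$, and summing against ${\rm Tr}_{\mathcal{G}}q_{T_{0}}\leq 0$ forces every $\Phi_{T_{0},\theta_{i}}$ to vanish.

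The hard part will be exactly this equality analysis in the non-strict case. Vanishing of all $\Phi_{T_{0},\theta_{i}}$ only places $T_{0}$ in the degeneracy locus $\{T\in\mathcal{H}^{*}\mid\Phi_{T,v}\equiv 0\ \forall v\}\subseteq S_{0}$, which by itself gives the weaker estimate ${\rm index}(\Sigma)+{\rm nullity}(\Sigma)\geq(b_{1}(\Sigma)-\dim S_{0})/\dim\mathcal{G}$. To recover the clean bound with $b_{1}(\Sigma)$, I would separately account for the null directions coming from $\mathcal{H}_{0}$: since $\Phi$ maps $\mathcal{H}_{0}\times\mathcal{G}$ into $\ker\mathcal{J}$, I would aim to establish a lower bound of the form ${\rm nullity}(\Sigma)\geq\dim\mathcal{H}_{0}/\dim\mathcal{G}$ and combine it with the second inequality to obtain ${\rm index}(\Sigma)+{\rm nullity}(\Sigma)\geq\tfrac{1}{\dim\mathcal{G}}(b_{1}(\Sigma)-\dim\mathcal{H}_{0})+\tfrac{1}{\dim\mathcal{G}}\dim\mathcal{H}_{0}=b_{1}(\Sigma)/\dim\mathcal{G}$. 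Keeping the bookkeeping consistent between genuine $0$-eigenfunctions of $\mathcal{J}$ and test fields $\Phi_{T,v}$ that vanish identically is the delicate point; once that is settled everything else is a direct transcription of Lemma~\ref{lem:key} under the Hodge-theoretic identifications above.
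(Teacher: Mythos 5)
Your treatment of part (i) and of the second inequality in part (ii) is exactly what the paper does: the paper offers no separate proof of Proposition \ref{prop:ind} beyond the phrase ``by letting $S=\mathcal{H}^{*}$ in the previous lemma,'' and your specializations (Lemma \ref{lem:key}(i) with $c=0$, and Lemma \ref{lem:key}(ii) with $S_{0}=(\mathcal{H}_{0})^{\sharp}$) are the intended readings. The problem is the first inequality of part (ii), and you have correctly located the crux: when you test against the span of the non-positive eigenfunctions, the equality case of ${\rm Tr}_{\mathcal{G}}q_{T_{0}}\leq 0$ only forces $\Phi_{T_{0},\theta_{i}}\equiv 0$ for all $i$, which places $T_{0}$ in the degeneracy locus of $\Phi$ rather than forcing $T_{0}=0$. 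Your proposed repair, however, does not close this gap: the bound ${\rm nullity}(\Sigma)\geq \dim\mathcal{H}_{0}/\dim\mathcal{G}$ would require the map $\omega\mapsto(\Phi_{\omega^{\sharp},\theta_{1}},\dots,\Phi_{\omega^{\sharp},\theta_{\delta}})$ to be injective on $\mathcal{H}_{0}$, and if $\Phi_{T,v}\equiv 0$ for all $v$ for some non-trivial $T$ (which is exactly the scenario you are trying to rule out), the image of $\mathcal{H}_{0}\times\mathcal{G}$ in $\ker\mathcal{J}$ can be trivial and yields no lower bound on the nullity. So the ``delicate bookkeeping'' you defer is not bookkeeping at all: under the bare hypotheses of Lemma \ref{lem:key} the first inequality of (ii) is simply false (take $\Phi\equiv 0$ and a strictly stable minimal hypersurface with $b_{1}>0$).

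The clean way to finish is to isolate the missing non-degeneracy hypothesis: assume ${\rm Tr}_{\mathcal{G}}L_{T}>0$ for every non-trivial $T\in\mathcal{H}^{*}$, i.e.\ that $\Phi_{T,v}\equiv 0$ for all $v\in\mathcal{G}$ forces $T=0$. Then ${\rm Tr}_{\mathcal{G}}q_{T}\leq 0<c\cdot{\rm Tr}_{\mathcal{G}}L_{T}$ for any $c>0$, and applying Lemma \ref{lem:key}(i) with $c$ strictly between $0$ and the smallest positive eigenvalue of $\mathcal{J}$ gives $\sharp\{\lambda<c\}={\rm index}(\Sigma)+{\rm nullity}(\Sigma)\geq b_{1}(\Sigma)/\dim\mathcal{G}$ at once, with no equality analysis needed. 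This hypothesis is satisfied by both test maps actually used in the paper: for $\psi_{T,X}=\langle T,X^{\dagger}\rangle$ the vectors $X^{\dagger}(p)$ span $T_{p}N$, and for $\varphi_{T,v}=\langle\nu\wedge T,\Pi(v)\rangle$ the bivectors $Y_{i}^{\dagger}\wedge Y_{j}^{\dagger}$ span $\wedge^{2}T_{p}N$, so in either case vanishing of all test functions forces $T=0$ pointwise. You should state this non-degeneracy explicitly (it is also tacit in the paper's formulation) rather than route the argument through a nullity estimate that presupposes it.
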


The proof of our main result (Theorem \ref{thm:main}) will be done  by taking an appropriate map $\Phi: \Gamma(T\Sigma)\times\mathcal{G}\to \Gamma(\nu\Sigma)$ and applying Proposition \ref{prop:ind} to the map $\Phi$.  In order to construct $\Phi$, we consider an isometric immersion $M^{n}$ into an $(n+k)$-dimensional Riemannian symmetric space $N^{n+k}$. When $N=\R^{n+k}$, the method has been considered by  Ambrozio-Carlotto-Sharp \cite{ACS, ACS2} which is inspired by the idea of  Ros \cite{Ros}, Urbano \cite{Urbano} and Savo \cite{Savo}. In the present paper, we extend their method to the case when $N$ is a compact semi-simple Riemannian symmetric space (see also Appendix for the correspondence of our argument in the case when $N=\R^{n+k}$). We shall give the details of our method in Section \ref{sec:tr1} and \ref{sec:tr2}.

\subsection{Preliminaries on symmetric spaces}
A Riemannian manifold  $(N,g)$ is called a {\it Riemannian symmetric space} ({\it RSS} for short) if,  for each $p\in N$, there is an isometry $s_{p}$ on $N$ satisfying that (i) $s_{p}^{2}=id_{N}$ and (ii) $p$ is an isolated fixed point of $s_{p}$. Note that $(N,g)$ is a complete Riemannian manifold.  Moreover, it is known that the identity component of the isometry group $I_{0}(N)$ acts on $N$ transitively and thus, $N$ is diffeomorphic to the homogeneous space $G/K_{p}$, where $G=I_{0}(N)$ and $K_{p}$ is the isotropy subgroup at some point $p\in N$.  In the following, we identify $N$ with $G/K_{p}$. We refer to \cite{Sakai} for basic facts for RSS. 

We denote the Lie algebra of $G$ (resp. $K_{p}$) by $\fg$ (resp. $\fk_{p}$).   The geodesic symmetry $s_{p}$ at $p$ defines an involutive automorphism $\widetilde{\sigma}_{p}$ on $G=I_{0}(N)$ by $\widetilde{\sigma}_{p}(g):=s_{p}gs_{p}$. Then, the differential $\sigma_{p}:=(d\widetilde{\sigma}_{p})_{e}$ at the identity element $e\in G$ induces an involutive automorpshim $\sigma_{p}:\fg\to \fg$, and the eigen-decomposition of $\sigma_{p}$ yields a reductive decomposition 
\[
\fg=\fk_{p}\oplus \fn_{p},
\]
where $\fk_{p}$ coincides with the $+1$-eigenspace of $\sigma_{p}$ and $\fn_{p}$ denotes the $-1$-eigenspace of $\sigma_{p}$.  
The  decomposition $\fg=\fk_{p}\oplus \fn_{p}$  satisfies the following bracket relations
\begin{align}\label{eq:br0}
[\fk_{p},\fk_{p}]\subset \fk_{p},\quad  [\fk_{p},\fn_{p}]\subset \fn_{p}\quad [\fn_{p},\fn_{p}]\subset \fk_{p}.
\end{align}

For an element $X\in \fg$, we define the {\it fundamental vector field} $X^{\dagger}$ on $N=G/K_{p}$ by 
\[
X^{\dagger}(q):=\frac{d}{dt} {\rm exp}(tX)q\Big{|}_{t=0},\quad q\in N.
\]
Then, $X^{\dagger}$ defines a Killing vector field on $N$ since ${\rm exp}(tX)\in G=I_{0}(N)$ is a 1-parameter subgroup of isometries. It is a general fact for Riemannian homogeneous spaces that the map $X\mapsto X^{\dagger}$ is a liner map satisfying that 
\begin{align}\label{eq:br1}
[X^{\dagger}, Y^{\dagger}]=-[X,Y]^{\dagger}.
\end{align}
Moreover, for each $p\in N$, the linear map
\[
\fn_{p}\to T_{p}N,\quad X\mapsto X^{\dagger}(p)
\]
yields an isomorphism. We denote the converse correspondence by $T_{p}N\ni V\mapsto \widetilde{V}\in \fn_{p}$.

The following lemma is a fundamental fact on symmetric space, but will be a key  in our argument. 

\begin{lemma}\label{lem:key1}
Suppose $N$ is a Riemannian symmetric space and fix arbitrary point $p\in N$.  We denote the  Levi-Civita connection of $N$ by $\overline{\nabla}$.
  \begin{enumerate}
\item If $X\in \fk_{p}$ then $X^{\dagger}(p)=0$, and $\onab_{V}X^{\dagger}(p)$ corresponds to $[X, \widetilde{V}]\in \fn_{p}$ for any $V\in T_{p}N$.
\item If $X\in \fn_{p}$ then $\overline{\nabla}X^{\dagger}(p)=0$.
\end{enumerate}
\end{lemma}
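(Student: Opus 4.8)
The plan is to prove the two statements separately, using the flow of the Killing field $X^{\dagger}$ together with the reductive structure for (1), and the geodesic symmetry $s_{p}$ for (2).

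For (1), I would first note that $X\in\fk_p$ means $\expo(tX)\in K_p$, so the isometries $\expo(tX)$ fix $p$; differentiating $\expo(tX)p\equiv p$ at $t=0$ gives $X^{\dagger}(p)=0$ at once. For the covariant derivative, write $\phi_t$ for the flow of $X^{\dagger}$, i.e.\ the action of $\expo(tX)$. Since $\phi_t(p)=p$, the maps $d(\phi_t)_p$ form a one-parameter group of linear isometries of $T_pN$, and the key is to identify its infinitesimal generator $B:=\frac{d}{dt}\big|_{t=0}d(\phi_t)_p$ in two ways. On one hand, taking a variation $F(t,s)=\phi_t(c(s))$ with $c(0)=p$, $c'(0)=V$ and using the torsion-free symmetry $\frac{D}{dt}\partial_sF=\frac{D}{ds}\partial_tF$ shows $BV=\onab_VX^{\dagger}(p)$; here one uses that $t\mapsto F(t,0)=p$ is constant, so the covariant and ordinary $t$-derivatives of $\partial_sF(t,0)\in T_pN$ coincide. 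On the other hand, under the identification $T_pN\cong\fn_p$, $V=\widetilde V^{\dagger}(p)\leftrightarrow\widetilde V$, the isotropy action of $k\in K_p$ is $\mathrm{Ad}(k)|_{\fn_p}$, because $k\,\expo(s\widetilde V)\,p=\expo(s\,\mathrm{Ad}(k)\widetilde V)\,p$ and $\mathrm{Ad}(K_p)$ preserves $\fn_p$ (as $\mathrm{Ad}(k)$ commutes with $\sigma_p$). Differentiating $\mathrm{Ad}(\expo(tX))\widetilde V$ at $t=0$ gives $BV=([X,\widetilde V])^{\dagger}(p)$, which corresponds to $[X,\widetilde V]\in\fn_p$, the membership following from $[\fk_p,\fn_p]\subset\fn_p$. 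Comparing the two expressions for $B$ yields the claim.

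For (2), the essential input is the geodesic symmetry. Since $s_p$ is an isometry with $s_p(p)=p$ and $d(s_p)_p=-\mathrm{id}$, and since $\widetilde\sigma_p(g)=s_pgs_p$, I would first check that $(s_p)_*X^{\dagger}=(\sigma_pX)^{\dagger}$: the flow $s_p\circ\phi_t\circ s_p$ of $(s_p)_*X^{\dagger}$ is the action of $\widetilde\sigma_p(\expo(tX))=\expo(t\,\sigma_pX)$. For $X\in\fn_p$ we have $\sigma_pX=-X$, hence $(s_p)_*X^{\dagger}=-X^{\dagger}$. Now apply the naturality of the Levi-Civita connection under the isometry $s_p$ at the fixed point $p$: $d(s_p)_p\big(\onab_VX^{\dagger}(p)\big)=\onab_{d(s_p)_pV}\big((s_p)_*X^{\dagger}\big)(p)=\onab_{-V}(-X^{\dagger})(p)=\onab_VX^{\dagger}(p)$. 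Since $d(s_p)_p=-\mathrm{id}$, the left-hand side equals $-\onab_VX^{\dagger}(p)$, forcing $\onab_VX^{\dagger}(p)=0$ for every $V\in T_pN$, i.e.\ $\onab X^{\dagger}(p)=0$.

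The routine parts are the flow computation and the sign bookkeeping; I expect the main subtlety to be the clean identification of the generator $B$ with $\onab_{\cdot}X^{\dagger}(p)$ through the torsion-free symmetry of the connection, and making sure the reductive identification $T_pN\cong\fn_p$ together with the invariance $\mathrm{Ad}(K_p)\fn_p\subset\fn_p$ is invoked correctly so that the two computations of $B$ can be matched. For (2) there is no real obstacle beyond the symmetric-space structure itself, namely $(s_p)_*X^{\dagger}=-X^{\dagger}$ on $\fn_p$, which is precisely where the defining property of a symmetric space enters.
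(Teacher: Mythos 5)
Your proof is correct, but both halves run along routes that differ from the paper's. For (i), after the common observation that $X^{\dagger}(p)=0$, the paper simply writes $\onab_{V}X^{\dagger}(p)=\onab_{\widetilde V^{\dagger}}X^{\dagger}(p)=[\widetilde V^{\dagger},X^{\dagger}](p)=[X,\widetilde V]^{\dagger}(p)$, using torsion-freeness together with $X^{\dagger}(p)=0$ and the relation $[X^{\dagger},Y^{\dagger}]=-[X,Y]^{\dagger}$; you instead identify $\onab_{\cdot}X^{\dagger}(p)$ with the infinitesimal generator of the linearized isotropy action $d(\phi_t)_p$ and then compute that generator as $\mathrm{ad}(X)|_{\fn_p}$ via the standard fact that the isotropy representation is $\mathrm{Ad}(K_p)|_{\fn_p}$. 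The two computations are equivalent in content (both hinge on torsion-freeness and the vanishing of $X^{\dagger}$ at $p$), but the paper's is a one-line bracket manipulation while yours makes the isotropy representation explicit. The divergence is more substantial in (ii): the paper deduces $[Y,Z]^{\dagger}(p)=0$ for $Y,Z\in\fn_p$ from part (i) and $[\fn_p,\fn_p]\subset\fk_p$, and then runs the Koszul formula for Killing fields to kill all terms of $2\langle\onab_{X^{\dagger}}Y^{\dagger},Z^{\dagger}\rangle$ at $p$; you use the classical parity argument with the geodesic symmetry, namely $(s_p)_*X^{\dagger}=(\sigma_pX)^{\dagger}=-X^{\dagger}$ for $X\in\fn_p$ together with naturality of the Levi-Civita connection under isometries at the fixed point, forcing $\onab_VX^{\dagger}(p)=-\onab_VX^{\dagger}(p)$. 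Your version is more conceptual and isolates exactly where the symmetric-space hypothesis enters, at the small extra cost of verifying $(s_p)_*X^{\dagger}=(\sigma_pX)^{\dagger}$ (a short flow computation, which you do correctly); the paper's version stays entirely inside the bracket relations \eqref{eq:br0}--\eqref{eq:br1} and the Killing equation. Both are complete and standard.
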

\begin{proof}
We give a proof for the sake of convenience of the reader. 

(i) If $X\in \fk_{p}$, then we have ${\rm exp}(tX)\in K_{p}$. Thus, $X^{\dagger}(p)=\frac{d}{dt}{\rm exp}(tX)p|_{t=0}=\frac{d}{dt}p|_{t=0}=0$  since $K_{p}$ is the isotropy subgroup at $p$. Moreover, since $V_{p}=\widetilde{V}^{\dagger}_{p}$, we have
\begin{align*}
\onab_{V}X^{\dagger}(p)=\onab_{\widetilde{V}^{\dagger}}X^{\dagger}(p)=[\widetilde{V}^{\dagger}, {X}^{\dagger}](p)=[X,\widetilde{V}]^{\dagger}(p)
\end{align*}
where we used $X^{\dagger}(p)=0$ and \eqref{eq:br1}. Since $[X,\widetilde{V}]\in \fn_{p}$ by \eqref{eq:br0}, this shows that $\onab_{V}X^{\dagger}(p)$ corresponds to $[X,\widetilde{V}]$ via the canonical identification $T_{p}N\simeq \fn_{p}$.

(ii) By (i) and the bracket relation \eqref{eq:br0}, we have
\begin{align}\label{eq:br2}
[Y,Z]^{\dagger}(p)=0\quad \textup{for any $Y,Z\in \fn_{p}$}
\end{align}
since $[Y,Z]\in \fk_{p}$.
On the other hand, since $X^{\dagger}$ is a Killing vector field on $N$, we have
\begin{align*}
X^{\dagger}\langle y,z\rangle&=\langle [X^{\dagger},y],z\rangle+\langle y,[X^{\dagger},z]\rangle
\end{align*}
for any vector fields $y, z$ on $N$.  Thus,  the Koszul formula and \eqref{eq:br1} show that
\begin{align*}
2\langle \overline{\nabla}_{X^{\dagger}}Y^{\dagger},Z^{\dagger}\rangle&=X^{\dagger}\langle Y^{\dagger},Z^{\dagger}\rangle+Y^{\dagger}\langle X^{\dagger},Z^{\dagger}\rangle-Z^{\dagger}\langle X^{\dagger},Y^{\dagger}\rangle\\
&\quad +\langle [X^{\dagger}, Y^{\dagger}],Z^{\dagger}\rangle-\langle [X^{\dagger},Z^{\dagger}], Y^{\dagger}\rangle-\langle [Y^{\dagger},Z^{\dagger}],X^{\dagger}\rangle\\
&=\langle [X^{\dagger}, Y^{\dagger}],Z^{\dagger}\rangle+\langle[X^{\dagger},Z^{\dagger}], Y^{\dagger}\rangle+\langle[Y^{\dagger},Z^{\dagger}], X^{\dagger}\rangle\\
&=-\langle [X, Y]^{\dagger},Z^{\dagger}\rangle-\langle[X,Z]^{\dagger}, Y^{\dagger}\rangle-\langle[Y,Z]^{\dagger}, X^{\dagger}\rangle
\end{align*}
for any $X,Y,Z\in \fg$.  In particular, by \eqref{eq:br2}, we obtain $\langle \overline{\nabla}_{X^{\dagger}}Y^{\dagger},Z^{\dagger}\rangle(p)=0$ for any $X,Y,Z\in \fn_{p}$. Since $\fn_{p}$ is isomorphic to $T_{p}N$ via the map $X\mapsto X^{\dagger}(p)$, this shows that $ \overline{\nabla}Y^{\dagger}(p)=0$. Because $Y\in \fn_{p}$ is arbitrary, this proves the second assertion. 
\end{proof}

A Riemannian symmetric space $N$ is said to be {\it semi-simple} if the identity component of the isometry group $G=I_{0}(N)$ is a semi-simple Lie group. In the following, we assume $N=G/K_{p}$ is a semi-simple RSS.  Note that this assumption is not restrictive. In fact, it is known that any simply-connected RSS is decomposed into a product of Euclidean space and a semi-simple RSS. Moreover, any semi-simple RSS is an Einstein manifold, and if furthermore $G$ is compact, then $N$ has positive Ricci curvature.  We remark that  the Euclidean space $\R^{n+k}$ is not a semi-simple RSS and somehow a special case. We will discuss the case when $N=\R^{n+k}$ in Appendix.  

When $G$ is semi-simple, then there is a canonical  bilinear form ${\bf B}$ on the Lie algebra $\fg$ of $G$, called the  {\it Killing  form}.  It is known that ${\bf B}$ is non-degenerate and invariant under any Lie algebra automorphism on $\fg$. 
Moreover, if $G$ is compact, then ${\bf B}$ is negative-definite. Thus, if this is the case, we can define a positive-definite inner product on $\fg$ by
\[
\langle X,Y\rangle_{\fg}:=-{\bf B}(X,Y).
\]
Actually, this inner product induces the canonical $G$-invariant Riemannian metric on a compact semi-simple symmetric space $N=G/K_{p}$. More precisely, the restriction of $\langle,\rangle_{\fg}$ to the subspace $\fn_{p}$ defines an ${\rm Ad}(K_{p})$-invariant inner product $\langle,\rangle_{\fn_{p}}$ on $\fn_{p}$. Then,  $\langle,\rangle_{\fn_{p}}$ extends to the $G$-invariant Riemannian metric on $N=G/K_{p}$ by the left $G$-action. In particular, $(\fn_{p},\langle,\rangle_{\fn_{p}})$ and $(T_{p}N, g_{p})$ are isometric  by the canonical isomorphism $\fn_{p}\simeq T_{p}N$.

We use the following simple fact for the canonical inner product:

\begin{lemma}\label{lem:key2}
Suppose $G$ is compact and  semi-simple. Then, the canonical decomposition $\fg=\fk_{p}\oplus \fn_{p}$ is an orthogonal decomposition w.r.t. $\langle,\rangle_{\fg}$ for every $p\in N$.
\end{lemma}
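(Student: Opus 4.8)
The plan is to exploit the invariance of the Killing form under Lie algebra automorphisms, which is exactly the property of $\mathbf{B}$ recalled just above the statement. Recall that the decomposition $\fg=\fk_{p}\oplus\fn_{p}$ arises as the eigenspace decomposition of the involutive automorphism $\sigma_{p}:\fg\to\fg$ (the differential of $\widetilde{\sigma}_{p}$ at $e$), where $\fk_{p}$ is the $(+1)$-eigenspace and $\fn_{p}$ is the $(-1)$-eigenspace. Since $\sigma_{p}$ is a Lie algebra automorphism, the stated invariance of $\mathbf{B}$ gives $\mathbf{B}(\sigma_{p}X,\sigma_{p}Y)=\mathbf{B}(X,Y)$ for all $X,Y\in\fg$.

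First I would take $X\in\fk_{p}$ and $Y\in\fn_{p}$ arbitrary. Then $\sigma_{p}X=X$ and $\sigma_{p}Y=-Y$, so invariance yields $\mathbf{B}(X,Y)=\mathbf{B}(\sigma_{p}X,\sigma_{p}Y)=\mathbf{B}(X,-Y)=-\mathbf{B}(X,Y)$, whence $2\mathbf{B}(X,Y)=0$ and therefore $\mathbf{B}(X,Y)=0$. By the definition $\langle X,Y\rangle_{\fg}=-\mathbf{B}(X,Y)$ this means $\langle X,Y\rangle_{\fg}=0$, so $\fk_{p}$ and $\fn_{p}$ are orthogonal with respect to $\langle,\rangle_{\fg}$. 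Since $p\in N$ was arbitrary, the orthogonality holds at every point, which is the assertion.

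There is essentially no obstacle here: the only ingredients are that $\sigma_{p}$ is an automorphism, so that $\mathbf{B}$ is $\sigma_{p}$-invariant, together with the eigenspace characterization of $\fk_{p}$ and $\fn_{p}$. I note that neither compactness nor semi-simplicity is actually needed for the orthogonality itself; those hypotheses enter only to guarantee that $\langle,\rangle_{\fg}=-\mathbf{B}$ is a genuine positive-definite inner product (via negative-definiteness of $\mathbf{B}$ for compact $G$), which is what makes the word ``orthogonal'' meaningful in the statement. Consequently, the proof is a two-line computation, and the real content lies in the setup already established before the lemma.
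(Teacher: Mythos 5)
Your proof is correct and is essentially identical to the paper's: both apply the invariance of the Killing form under the involutive automorphism $\sigma_{p}$ to $X\in\fk_{p}$, $Y\in\fn_{p}$ and conclude $\mathbf{B}(X,Y)=-\mathbf{B}(X,Y)=0$. Your closing observation that compactness and semi-simplicity are needed only for positive-definiteness of $\langle,\rangle_{\fg}$, not for the orthogonality itself, is also accurate.
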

\begin{proof}
 Take arbitrary $X\in \fk_{p}$ and $Y\in \fn_{p}$. Since the Killing form is invariant for any automorphism on the Lie algebra, we have
\begin{align*}
\langle X, Y\rangle_{\fg}=-{\bf B}(X,Y)=-{\bf B}(d\sigma_{p}X,d\sigma_{p}Y)={\bf B}(X,Y)=-\langle X,Y\rangle_{\fg}
\end{align*}
and hence, $\langle X, Y\rangle _{\fg}=0$.
\end{proof}

\begin{remark}
{\rm
When $G$ is non-compact, one can define a positive-definite inner product on $\fg$ by
$
\langle X, Y\rangle_{\fg}':=-{\bf B}(X,\sigma_{o}Y) 
$, where $\sigma_{o}: \fg\to \fg$ is the Cartan involution at a fixed point $o\in N$. Actually, $\langle , \rangle_{\fg}'$ induces the canonical $G$-invariant Riemannian metric on $N$. However,  we remark that $\langle,\rangle_{\fg}'$ depends on the fixed point $o\in N$ and because of this,  the canonical  decomposition $\fg=\fk_{p}\oplus \fn_{p}$ at $p\in N$ may not orthogonal w.r.t. $\langle,\rangle_{\fg}'$,  although the decomposition $\fg=\fk_{o}\oplus \fn_{o}$ at $o$ is  orthogonal. Lemma \ref{lem:key2} is crucial in our argument, and our computations given in Section \ref{sec:tr1} and \ref{sec:tr2} do not work if we choose $\langle , \rangle_{\fg}'$ as the inner product on the Lie algebra $\fg$ of a non-compact Lie group $G$.
}
\end{remark}

\section{Trace formula I}\label{sec:tr1}

Let $(M^{n},g)$ be a Riemannian manifold and $f: \Sigma^{n-1}\to M^{n}$ an isometric immersion of closed manifold $\Sigma^{n-1}$. Throughout this section, we suppose that  {\it $f$ has trivial normal bundle}  (i.e   $\Sigma$ is  {\it two-sided}  if  $f$ is an embedding) so that there is a smooth unit normal vector field $\nu$ along $\Sigma$. In this section, we provide a trace formula for a quadratic form by extending the method considered in \cite[Proposition 1]{ACS}, and give a simple application. 

\subsection{Trace formula}

Suppose that $M^{n}$ is isometrically immersed into a  compact semi-simple symmetric space $N^{n+k}=G/K$ via the map $F: M\to N$. Then, $\Sigma$ is isometrically immersed into $N$ by the map $F\circ f: \Sigma\to N$. We consider the following map:
\[
\Psi: \Gamma(T\Sigma)\times \fg\to \Gamma(\nu\Sigma),\quad \Psi(T,X)=\psi_{T,X}\nu,
\]
where
\[
\psi_{T,X}(p):=\langle T, X^{\dagger}\rangle_{F\circ f(p)}. 
\]
Also, we consider  a quadratic form on $\fg$ defined by
\[
r_{T}(X,Y):=Q(\psi_{T,X}, \psi_{T,Y}).
\]
We shall compute the trace of $r_{T}$ over $\fg$ with respect to  the inner product $\langle,\rangle_{\fg}$ defined by the negative Killing form. We put 
\[
r_{T,p}(X,Y):=\langle \mathcal{J}(\psi_{T, X}), \psi_{T,Y}\rangle(p)
\]
for $p\in \Sigma$
so that 
\[
{\rm Tr}_{\fg}r_{T}=-\int_{\Sigma}{\rm Tr}_{\fg}r_{T, p}\,d\mu_{\Sigma}.
\]
In the following, we abbreviate the fixed subscript $T$.  

We use the following notations:
\begin{itemize}
\item $\overline{\nabla}$, $\nabla^{M}$, $\nabla$: the Levi-Civita connections of $N$, $M$ and $\Sigma$, respectively.
\item $\overline{R}, R^{M}, R$, the curvature tensors of $\overline{\nabla}$, $\nabla^{M}$ and $\nabla$, respectively.  Note that we use the following definition of the curvature tensor (the sign is different from \cite{ACS}):
\[
\overline{R}(X,Y)Z:=\onab_{X}\onab_{Y}Z-\onab_{Y}\onab_{X}Z-\onab_{[X,Y]}Z.
\]
\item $\overline{\rm Ric}$, ${\rm Ric}^{M}$: the Ricci  tensor of $\overline{\nabla}$ and $\nabla^{M}$, respectively. 
\item $A,B$: the second fundamental forms of $\Sigma\to M$ and $M\to N$, respectively.
\item For a fixed point $p\in\Sigma$, we take a geodesic normal coordinate around $p$ in $\Sigma$, and we denote the coordinate basis by $\{\p_{\mu}\}_{\mu=1}^{n-1}$.
\end{itemize}

In this notation, we have the following fundamental formulas.
\begin{itemize}
\item (Gauss equation) For any $X,Y,Z,W\in T_p\Sigma$, we have 
\[\langle R^M(X,Y)Z,W\rangle=\langle R(X,Y)Z,W\rangle+\langle A(X,Z), A(Y,W)\rangle-\langle A(X,W), A(Y,Z)\rangle.\]
Gauss equation for the isometric immersion $M\to N$ is given as well. 
\item (Codazzi equation) For any $X,Y,Z\in T_p\Sigma$, we have
\[
(R^M(X,Y)Z)^\perp=(\nabla^\perp_XA)(Y,Z)-(\nabla_Y^\perp A)(X,Z),
\]
where $\nabla^\perp$ is the normal connection defined by $\nabla_X^\perp \xi:=(\nabla^M_X\xi)^\perp$ for a normal vector field $\xi$ along $\Sigma$ in $M$ and $(\nabla^\perp_XA)(Y,Z)=\nabla_X^\perp(A(Y,Z))-A(\nabla_XY,Z)-A(Y,\nabla_XZ)$.
\end{itemize}

We first prove the following general formula.

\begin{lemma}
Let $M^{n}$ be a Riemannian manifold and $\Sigma^{n-1}\to M^{n}$  a minimal immersion of a closed manifold $\Sigma^{n-1}$. For any tangent vector field $T$ on $\Sigma$, we have
\begin{align}\label{eq:BW}
\frac{1}{2}\Delta |T|^{2}-|\nabla T|^{2}=-(\Delta_{1}T^{\flat})(T)+\sum_{\mu=1}^{n-1}\langle {R}^{M}(T,\p_{\mu})\p_{\mu}, T\rangle-\sum_{\mu=1}^{n-1}| A(\p_{\mu},T)|^2,
\end{align}
where $T^{\flat}=\langle T,\cdot\rangle|_{\Sigma}$  and $\Delta_{1}$ is the Hodge Laplacian action on $\Omega^{1}(\Sigma)$  the space of $1$-forms on $\Sigma$.
\end{lemma}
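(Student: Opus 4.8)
The proposed proof treats \eqref{eq:BW} as a pointwise identity and derives it by combining a Bochner--Weitzenb\"ock identity on $\Sigma$ with the Gauss equation for the immersion $\Sigma\to M$. I would fix $p\in\Sigma$ and work in the geodesic normal coordinates $\{\p_{\mu}\}$ at $p$, so that $\{\p_{\mu}\}_{\mu=1}^{n-1}$ is orthonormal at $p$ and $\nabla_{\p_{\mu}}\p_{\nu}(p)=0$; all quantities below are evaluated at $p$.

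\textbf{Step 1 (rough Laplacian and Weitzenb\"ock).} First I would compute
\begin{align*}
\tfrac{1}{2}\Delta|T|^{2}=\sum_{\mu=1}^{n-1}\langle\nabla_{\p_{\mu}}\nabla_{\p_{\mu}}T,T\rangle+|\nabla T|^{2},
\end{align*}
which rearranges to $\tfrac{1}{2}\Delta|T|^{2}-|\nabla T|^{2}=\langle{\rm tr}\,\nabla^{2}T,T\rangle$, with ${\rm tr}\,\nabla^{2}=\sum_{\mu}\nabla_{\p_{\mu}}\nabla_{\p_{\mu}}$ the connection (rough) Laplacian. I then invoke the Weitzenb\"ock formula $\Delta_{1}=-{\rm tr}\,\nabla^{2}+{\rm Ric}^{\Sigma}$ on $\Omega^{1}(\Sigma)$ and identify $T$ with $T^{\flat}$ via the metric, which gives
\begin{align*}
\tfrac{1}{2}\Delta|T|^{2}-|\nabla T|^{2}=-(\Delta_{1}T^{\flat})(T)+{\rm Ric}^{\Sigma}(T,T),
\end{align*}
where ${\rm Ric}^{\Sigma}$ denotes the Ricci tensor of $\Sigma$.

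\textbf{Step 2 (Gauss equation and minimality).} It then remains to turn the intrinsic term ${\rm Ric}^{\Sigma}(T,T)$ into the ambient expression on the right of \eqref{eq:BW}. Setting $X=W=T$ and $Y=Z=\p_{\mu}$ in the Gauss equation for $\Sigma\to M$ and summing over $\mu$, and using the pair symmetry $\langle R(\p_{\mu},T)T,\p_{\mu}\rangle=\langle R(T,\p_{\mu})\p_{\mu},T\rangle$ to recognise the intrinsic curvature sum as ${\rm Ric}^{\Sigma}(T,T)$, I obtain
\begin{align*}
\sum_{\mu=1}^{n-1}\langle R^{M}(T,\p_{\mu})\p_{\mu},T\rangle={\rm Ric}^{\Sigma}(T,T)+\sum_{\mu=1}^{n-1}|A(\p_{\mu},T)|^{2}-\langle A(T,T),H\rangle .
\end{align*}
Since $\Sigma$ is minimal, $H=\sum_{\mu}A(\p_{\mu},\p_{\mu})=0$, so the last term drops out; solving for ${\rm Ric}^{\Sigma}(T,T)$ and substituting into the identity of Step 1 yields \eqref{eq:BW} exactly.

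The computation is essentially routine, so the only genuine obstacle is bookkeeping with sign and normalisation conventions: the Laplacian on functions must be $\Delta={\rm tr}\,\nabla^{2}$ (nonpositive spectrum), $\Delta_{1}=dd^{*}+d^{*}d$ must be the positive Hodge Laplacian, and the curvature sign must be the one fixed in the excerpt, so that the Weitzenb\"ock identity enters with the stated sign and the Gauss-equation terms line up. Provided these are kept consistent, Steps 1 and 2 combine directly to give the formula.
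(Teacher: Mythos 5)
Your proposal is correct and follows essentially the same route as the paper: the identity $\tfrac12\Delta|T|^2-|\nabla T|^2=\langle\mathrm{tr}\,\nabla^2 T,T\rangle$ at the center of a normal coordinate chart, the Bochner--Weitzenb\"ock formula to trade the rough Laplacian for $-\Delta_1 T^\flat+\mathrm{Ric}^{\Sigma}(T,\cdot)$, and the Gauss equation together with $H=0$ to convert $\mathrm{Ric}^{\Sigma}(T,T)$ into $\sum_{\mu}\langle R^M(T,\p_\mu)\p_\mu,T\rangle-\sum_{\mu}|A(\p_\mu,T)|^2$. The sign and normalization conventions you flag are exactly those fixed in the paper, so the steps combine as stated.
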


\begin{proof}
Fix arbitrary point $p\in \Sigma$ and we take a geodesic normal coordinate in $\Sigma$. 
By the Bochner-Weitzenb\"ock formula and the Gauss equation for minimal immersion $\Sigma\to M$, we have
\begin{align*}
\frac{1}{2}\Delta |T|^{2}-|\nabla T|^{2}
&=\sum_{\mu=1}^{n-1}\langle \nabla_{\p_{\mu}}\nabla_{\p_{\mu}}T,T\rangle(p)\\
&=-(\Delta_{1}T^{\flat})(T)+\sum_{\mu=1}^{n-1}\langle R(T,\p_{\mu})\p_{\mu}, T\rangle\nonumber\\
&=-(\Delta_{1}T^{\flat})(T)+\sum_{\mu=1}^{n-1}\langle {R}^{M}(T,\p_{\mu})\p_{\mu}, T\rangle-\sum_{\mu=1}^{n-1}| A(\p_{\mu},T)|^2.\nonumber
\end{align*}
This proves the formula.
\end{proof}

Next, we compute the trace of $r_p$ for a fixed point $p\in \Sigma$.

\begin{lemma}
\begin{align}\label{eq:trr}
{\rm Tr}_{\fg}r_{p}&=-(\Delta_{1}T^{\flat})(T)-2\sum_{\mu=1}^{n-1}|A(\p_{\mu},T)|^{2}+|A|^{2}|T|^{2}+{\rm Ric}^{M}(\nu,\nu)|T|^{2}\\
&\quad-\sum_{\mu=1}^{n-1}|B(\p_{\mu},T)|^{2} +\sum_{\mu=1}^{n-1}\langle {R}^{M}(T,\p_{\mu})\p_{\mu}, T\rangle-\sum_{\mu=1}^{n-1}\langle \overline{R}(T,\p_{\mu})\p_{\mu}, T\rangle.\nonumber
\end{align}
\end{lemma}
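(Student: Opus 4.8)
The plan is to evaluate ${\rm Tr}_{\fg}r_{p}=\sum_{i}\mathcal{J}(\psi_{i})\psi_{i}$ at the point $q:=F\circ f(p)\in N$, where $\psi_{i}:=\psi_{T,X_{i}}$ and $\{X_{i}\}$ is an orthonormal basis of $(\fg,\langle,\rangle_{\fg})$ chosen adapted to the decomposition $\fg=\fk_{q}\oplus\fn_{q}$; since the trace of a quadratic form is independent of the choice of orthonormal basis, this is harmless. The key algebraic input is the reproducing identity
\[
\sum_{i}\langle V,X_{i}^{\dagger}(q)\rangle\langle W,X_{i}^{\dagger}(q)\rangle=\langle V,W\rangle\qquad (V,W\in T_{q}N),
\]
which follows from Lemma \ref{lem:key1}(i) (so that $X^{\dagger}(q)=0$ for $X\in\fk_{q}$) together with Lemma \ref{lem:key2} and the fact that $\fn_{q}\to T_{q}N$, $X\mapsto X^{\dagger}(q)$ is an isometry; hence $\{X_{a}^{\dagger}(q)\}_{X_{a}\in\fn_{q}}$ is an orthonormal basis of $T_{q}N$. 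Applied with $V=W=T$ it gives $\sum_{i}\psi_{i}^{2}=|T|^{2}$, which immediately turns the zeroth-order part of $\mathcal{J}$ into $({\rm Ric}^{M}(\nu,\nu)+|A|^{2})|T|^{2}$. It then remains to compute $\sum_{i}(\Delta\psi_{i})\psi_{i}$.

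For this I would work in the geodesic normal coordinates $\{\p_{\mu}\}$ at $p$, so that $\Delta\psi_{i}(p)=\sum_{\mu}\p_{\mu}\p_{\mu}\psi_{i}(p)$, and differentiate $\psi_{i}=\langle T,X_{i}^{\dagger}\rangle$ twice using metric compatibility of $\overline{\nabla}$. This produces three groups of terms. The cross term $2\sum_{\mu,i}\langle\overline{\nabla}_{\p_{\mu}}T,\overline{\nabla}_{\p_{\mu}}X_{i}^{\dagger}\rangle\psi_{i}$ vanishes identically: for $X_{i}\in\fn_{q}$ Lemma \ref{lem:key1}(ii) gives $\overline{\nabla}X_{i}^{\dagger}(q)=0$, while for $X_{i}\in\fk_{q}$ we have $\psi_{i}(p)=\langle T,X_{i}^{\dagger}(q)\rangle=0$. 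The term $\sum_{\mu,i}\langle T,\overline{\nabla}^{2}_{\p_{\mu},\p_{\mu}}X_{i}^{\dagger}\rangle\psi_{i}$ I would handle by the Killing field identity $\overline{\nabla}^{2}_{Y,Z}X^{\dagger}=-\overline{R}(X^{\dagger},Y)Z$ (valid in the curvature sign convention fixed in this section); using that the Jacobi operator $V\mapsto\overline{R}(V,\p_{\mu})\p_{\mu}$ is self-adjoint and then the reproducing identity, this collapses to $-\sum_{\mu}\langle\overline{R}(T,\p_{\mu})\p_{\mu},T\rangle$. The remaining term $\sum_{\mu,i}\langle\overline{\nabla}^{2}_{\p_{\mu},\p_{\mu}}T,X_{i}^{\dagger}\rangle\psi_{i}$ collapses, again by the reproducing identity, to the purely geometric quantity $\sum_{\mu}\langle\overline{\nabla}^{2}_{\p_{\mu},\p_{\mu}}T,T\rangle$.

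To finish I would compute $\sum_{\mu}\langle\overline{\nabla}^{2}_{\p_{\mu},\p_{\mu}}T,T\rangle$ intrinsically. The iterated Gauss formula for $\Sigma\to M\to N$ gives the orthogonal splitting $\overline{\nabla}_{\p_{\mu}}T=\nabla_{\p_{\mu}}T+A(\p_{\mu},T)+B(\p_{\mu},T)$, whence $|\overline{\nabla}_{\p_{\mu}}T|^{2}=|\nabla_{\p_{\mu}}T|^{2}+|A(\p_{\mu},T)|^{2}+|B(\p_{\mu},T)|^{2}$ and $\langle\overline{\nabla}_{\p_{\mu}}T,T\rangle=\langle\nabla_{\p_{\mu}}T,T\rangle$. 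Differentiating once more and summing over $\mu$ yields
\[
\sum_{\mu}\langle\overline{\nabla}^{2}_{\p_{\mu},\p_{\mu}}T,T\rangle=\tfrac{1}{2}\Delta|T|^{2}-|\nabla T|^{2}-\sum_{\mu}|A(\p_{\mu},T)|^{2}-\sum_{\mu}|B(\p_{\mu},T)|^{2},
\]
into which I substitute the Bochner--Weitzenb\"ock identity \eqref{eq:BW}. Collecting all contributions reproduces \eqref{eq:trr}. The main obstacle is not any single estimate but the careful bookkeeping of the two second fundamental forms $A$ and $B$ together with pinning down the correct sign in the Killing field identity relative to the paper's curvature convention; once the reproducing identity and the vanishing of the cross term are in place via Lemma \ref{lem:key1} and Lemma \ref{lem:key2}, everything else is essentially forced.
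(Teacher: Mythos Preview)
Your proposal is correct and follows essentially the same route as the paper: choose an orthonormal basis of $\fg$ adapted to $\fk_{q}\oplus\fn_{q}$, use Lemma~\ref{lem:key1} so that the $\fk_{q}$-block contributes nothing and $\overline{\nabla}Y^{\dagger}(q)=0$ kills the cross term, apply the Killing field identity to handle $\overline{\nabla}_{\p_{\mu}}\overline{\nabla}_{\p_{\mu}}Y^{\dagger}$, and reduce $\sum_{\mu}\langle\overline{\nabla}_{\p_{\mu}}\overline{\nabla}_{\p_{\mu}}T,T\rangle$ via the Gauss splitting and \eqref{eq:BW}. The only cosmetic difference is that you package the step ``$\{Y_{i}^{\dagger}(q)\}$ is an orthonormal basis of $T_{q}N$'' as a reproducing identity over the full basis of $\fg$; the paper instead restricts the sum to $\fn_{q}$ at the outset, which amounts to the same thing.
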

\begin{proof}
Since the trace is independent of the choice of orthonormal basis, we shall take a preferred orthonormal basis of $\fg$. We put  $d={\rm dim}\fg$.
By Lemma \ref{lem:key2}, the canonical decomposition $\fg=\fk_{p}\oplus\ \fn_{p}$ is orthogonal, and hence, we can take an orthonormal basis $\{X_{1},\ldots, X_{d-(n+k)}, Y_{1},\ldots, Y_{n+k}\}$ of $\fg$ so that $\{X_{1},\ldots, X_{d-(n+k)}\}$ is an orthonormal basis of $\fk_{p}$ and $\{Y_{1},\ldots, Y_{n+k}\}$ is an orthonormal basis of $\fn_{p}$.  Since $X^{\dagger}(p)=0$ by Lemma \ref{lem:key1}, we see $r_{p}(X_{i}, X_{i})=0$ for any $i=1,\ldots, d-(n+k)$. Therefore, we have
\[
{\rm Tr}_{\fg}r_{p}=\sum_{i=1}^{n+k} r_{p}(Y_{i}, Y_{i})(p)=\sum_{i=1}^{n+k} \Big(\Delta\psi_{Y_{i}}\cdot \psi_{Y_{i}}(p)+({\rm Ric}^{M}(\nu,\nu)+|A|^{2})\psi_{Y_{i}}^{2}(p)\Big).
\]
Since the linear isomorphism $\fn_{p}\simeq T_{p}N$ is isometric, $\{Y_{i}^{\dagger}(p)\}_{i=1}^{n+k}$ becomes an orthonormal basis of $T_{p}N$. Therefore, we see
\[
\sum_{i=1}^{n+k}\psi_{Y_{i}}^{2}(p)=\sum_{i=1}^{n+k}\langle T, Y_{i}^{\dagger}\rangle^{2}(p)=|T|^{2}.
\]

  In the following computations, we take a geodesic normal coordinate of the hypersurface $\Sigma$ around $p$.
By Lemma \ref{lem:key1}, we have $\onab Y_{i}^{\dagger}(p)=0$, and hence, we see
\begin{align}
\Delta\psi_{Y_{i}}\cdot \psi_{Y_{i}}(p)\nonumber
&=\sum_{\mu=1}^{n-1}\p_{\mu}\p_{\mu}\langle T,Y_{i}^{\dagger}\rangle(p)\cdot \langle T, Y_{i}^{\dagger}\rangle(p)\nonumber\\
\label{eq:delta2}
&=\sum_{\mu=1}^{n-1}\langle \onab_{\p_{\mu}}\onab_{\p_{\mu}}T, Y_{i}^{\dagger}\rangle(p)\cdot \langle T, Y_{i}^{\dagger}\rangle(p)
+\sum_{\mu=1}^{n-1}\langle T, \onab_{\p_{\mu}}\onab_{\p_{\mu}}Y_{i}^{\dagger}\rangle(p)\cdot \langle T, Y_{i}^{\dagger}\rangle(p)
\end{align}
Because $\{Y_{i}^{\dagger}(p)\}_{i=1}^{n+k}$ is an orthonormal basis of $T_{p}N$,  the first term in \eqref{eq:delta2} is computed by 
\begin{align*}
\sum_{i=1}^{n+k}\sum_{\mu=1}^{n-1}\langle \onab_{\p_{\mu}}\onab_{\p_{\mu}}T, Y_{i}^{\dagger}\rangle(p)\cdot \langle T, Y_{i}^{\dagger}\rangle(p)
&=\sum_{\mu=1}^{n-1}\langle \onab_{\p_{\mu}}\onab_{\p_{\mu}}T, T\rangle\\
&=\frac{1}{2}\Delta |T|^{2}-|\nabla T|^{2}-\sum_{\mu=1}^{n-1}|A(\p_{\mu},T)|^{2}-\sum_{\mu=1}^{n-1}|B(\p_{\mu},T)|^{2},
\end{align*}
where we used the relation $\onab_{\p_{\mu}}T=\nabla_{\p_{\mu}}T+A(\p_{\mu},T)+B(\p_{\mu}, T)$. Thus, by using  \eqref{eq:BW}, the last equation becomes
\begin{align*}
-(\Delta_{1}T^{\flat})(T)+\sum_{\mu=1}^{n-1}\langle {R}^{M}(T,\p_{\mu})\p_{\mu}, T\rangle-2\sum_{\mu=1}^{n-1}|A(\p_{\mu},T)|^{2}-\sum_{\mu=1}^{n-1}|B(\p_{\mu},T)|^{2}.
\end{align*}

Next, we consider the second term of \eqref{eq:delta2}.
Since $Y^{\dagger}$ is a Killing vector field on $N$,  we have
$
(\onab^{2}Y^{\dagger})(Z,W)=-\overline{R}(Y^{\dagger},Z)W
$
 for any $Z,W\in T_{p}N$.  Combining this with $\onab Y_{i}^{\dagger}(p)=0$,  we have
 \begin{align}\label{eq:kill}
 (\onab_{\partial_{\mu}}\onab_{\partial_{\mu}}Y_{i}^{\dagger})(p)=(\onab^{2}Y_{i}^{\dagger})(\p_{\mu},\p_{\mu})(p)=-\overline{R}(Y_{i}^{\dagger}, \p_{\mu})\p_{\mu}.
\end{align}
Therefore, the second term of \eqref{eq:delta2} becomes
\begin{align*}
\sum_{i=1}^{n+k}\sum_{\mu=1}^{n-1}\langle T, \onab_{\p_{\mu}}\onab_{\p_{\mu}}Y_{i}^{\dagger}\rangle(p)\cdot \langle T, Y_{i}^{\dagger}\rangle(p)
&=\sum_{i=1}^{n+k}\sum_{\mu=1}^{n-1}-\langle T, \overline{R}(Y_{i}^{\dagger},\p_{\mu})\p_{\mu}\rangle(p)\cdot \langle T, Y_{i}^{\dagger}\rangle(p)\\
&=-\sum_{\mu=1}^{n-1}\langle \overline{R}(T,\p_{\mu})\p_{\mu}, T\rangle.
\end{align*}
Substituting these to \eqref{eq:delta2}, we obtain
\begin{align*}
\Delta\psi_{Y_{i}}\cdot \psi_{Y_{i}}(p)&=-(\Delta_{1}T^{\flat})(T)-2\sum_{\mu=1}^{n-1}|A(\p_{\mu},T)|^{2}-\sum_{\mu=1}^{n-1}|B(\p_{\mu},T)|^{2}\\
&\quad +\sum_{\mu=1}^{n-1}\langle {R}^{M}(T,\p_{\mu})\p_{\mu}, T\rangle-\sum_{\mu=1}^{n-1}\langle \overline{R}(T,\p_{\mu})\p_{\mu}, T\rangle.
\end{align*}
This implies the desired formula. 
\end{proof}

If we choose $T=\omega^{\sharp}$ the metric dual of a harmonic $1$-form $\omega$ on $\Sigma$, then we have $\Delta_{1}T^{\flat}=\Delta_{1}\omega=0$ and hence, by \eqref{eq:trr}, we obtain  
\begin{align}\label{eq:trr2}
{\rm Tr}_{\fg}r_{\omega^{\sharp}}&=\int_{\Sigma}2\sum_{\mu=1}^{n-1}|A(\p_{\mu},\omega^{\sharp})|^{2}-|A|^{2}|\omega|^{2} -{\rm Ric}^{M}(\nu,\nu)|\omega|^{2}\\
&\qquad+\sum_{\mu=1}^{n-1}\Big(|B(\p_{\mu},\omega^{\sharp})|^{2}-\langle {R}^{M}(\omega^{\sharp},\p_{\mu})\p_{\mu}, \omega^{\sharp}\rangle+\langle \overline{R}(\omega^{\sharp},\p_{\mu})\p_{\mu}, \omega^{\sharp}\rangle\Big) \,d\mu_{\Sigma}.\nonumber
\end{align}

%
%
%

If furthermore, $\Sigma$ is a surface,  we obtain the following which extends the formula given in  \cite[Proposition 1]{ACS}.

\begin{proposition}\label{prop:tr1}
Suppose $M^{3}$ is a $3$-dimensional Riemannian manifold isometrically immersed into a compact semi-simple symmetric space $N=G/K$. Let $\Sigma$ be a closed two-sided minimal surface in $M^{3}$. Then, for any harmonic $1$-form $\omega$ on $\Sigma$, we have
\begin{align}
\label{eq:trr3}
{\rm Tr}_{\fg}r_{\omega^{\sharp}}
&=\int_{\Sigma}-{\rm Ric}^{M}(\nu,\nu)|\omega|^{2}+\sum_{\mu=1}^{2}\Big(2|B(\p_{\mu},\omega^{\sharp})|^{2}-\langle  B(\p_{\mu},\p_{\mu}), B(\omega^{\sharp},\omega^{\sharp})\rangle\Big) \,d\mu_{\Sigma}.\\ 
\label{eq:trr4}
&=
\int_{\Sigma} \sum_{\mu=1}^{2} |B(e_{\mu}, \omega^{\sharp})|^{2}-\frac{{\rm scl}(M)}{2}|\omega|^{2}+\sum_{\mu=1}^{2}\langle \overline{R}(\omega^{\sharp},\p_{\mu})\p_{\mu}, \omega^{\sharp}\rangle\,d\mu_{\Sigma},
\end{align}
where ${\rm scl}(M)$ is the scalar curvature of $M$.
\end{proposition}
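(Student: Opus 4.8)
The plan is to start from the general trace identity \eqref{eq:trr2}, specialize it to the surface case $n=3$, and then systematically eliminate the intrinsic data $A$ and $R^{M}$ in favour of the outer immersion $M\to N$, namely its second fundamental form $B$ and the ambient curvature $\overline{R}$. Throughout I work pointwise at a fixed $p\in\Sigma$ with $\p_{\mu}=e_{\mu}$ an orthonormal frame adapted to $\Sigma$, so that $\{e_{1},e_{2},\nu\}$ is an orthonormal frame of $M^{3}$.

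The first step is to remove the terms involving $A$. Since $\Sigma$ is a minimal surface in $M^{3}$, the scalar second fundamental form $h$ defined by $A=h\,\nu$ is symmetric and trace-free on the $2$-dimensional space $T_{p}\Sigma$. Writing $H$ for the associated shape operator, Cayley--Hamilton gives $H^{2}=\tfrac{1}{2}|H|^{2}\,{\rm id}$, whence
\[
\sum_{\mu=1}^{2}|A(\p_{\mu},\omega^{\sharp})|^{2}=|H\omega^{\sharp}|^{2}=\tfrac{1}{2}|A|^{2}|\omega|^{2}.
\]
Consequently $2\sum_{\mu}|A(\p_{\mu},\omega^{\sharp})|^{2}-|A|^{2}|\omega|^{2}=0$, and \eqref{eq:trr2} collapses to
\[
{\rm Tr}_{\fg}r_{\omega^{\sharp}}=\int_{\Sigma}-{\rm Ric}^{M}(\nu,\nu)|\omega|^{2}+\sum_{\mu=1}^{2}\Big(|B(\p_{\mu},\omega^{\sharp})|^{2}-\langle R^{M}(\omega^{\sharp},\p_{\mu})\p_{\mu},\omega^{\sharp}\rangle+\langle\overline{R}(\omega^{\sharp},\p_{\mu})\p_{\mu},\omega^{\sharp}\rangle\Big)\,d\mu_{\Sigma}.
\]

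For \eqref{eq:trr3} I would apply the Gauss equation for $M\to N$ (ambient $N$, intrinsic $M$) to the quadruple $(\omega^{\sharp},\p_{\mu},\p_{\mu},\omega^{\sharp})$, obtaining
\[
\langle\overline{R}(\omega^{\sharp},\p_{\mu})\p_{\mu},\omega^{\sharp}\rangle-\langle R^{M}(\omega^{\sharp},\p_{\mu})\p_{\mu},\omega^{\sharp}\rangle=|B(\p_{\mu},\omega^{\sharp})|^{2}-\langle B(\p_{\mu},\p_{\mu}),B(\omega^{\sharp},\omega^{\sharp})\rangle.
\]
Substituting this into the displayed reduction and combining with the surviving $|B(\p_{\mu},\omega^{\sharp})|^{2}$ term produces exactly the integrand $\sum_{\mu}\big(2|B(\p_{\mu},\omega^{\sharp})|^{2}-\langle B(\p_{\mu},\p_{\mu}),B(\omega^{\sharp},\omega^{\sharp})\rangle\big)$ of \eqref{eq:trr3}. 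Getting the \emph{direction} of the Gauss equation right---ambient minus intrinsic---is the one sign I would verify carefully, since reversing it would flip the coefficient of $|B(\p_{\mu},\omega^{\sharp})|^{2}$.

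For \eqref{eq:trr4} I would instead eliminate the two purely intrinsic curvature quantities by exploiting that $\dim M=3$. Decomposing the scalar curvature into the sectional curvatures of the frame $\{e_{1},e_{2},\nu\}$ gives $\tfrac{1}{2}{\rm scl}(M)=K(e_{1},e_{2})+K(e_{1},\nu)+K(e_{2},\nu)$, while ${\rm Ric}^{M}(\nu,\nu)=K(e_{1},\nu)+K(e_{2},\nu)$, and, choosing $e_{1}$ parallel to $\omega^{\sharp}$, $\sum_{\mu}\langle R^{M}(\omega^{\sharp},\p_{\mu})\p_{\mu},\omega^{\sharp}\rangle=K(e_{1},e_{2})|\omega|^{2}$; as $K(e_{1},e_{2})$ is merely the sectional curvature of the fixed tangent plane $T_{p}\Sigma$, it does not depend on the rotation chosen. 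Adding these yields the pointwise identity
\[
{\rm Ric}^{M}(\nu,\nu)|\omega|^{2}+\sum_{\mu=1}^{2}\langle R^{M}(\omega^{\sharp},\p_{\mu})\p_{\mu},\omega^{\sharp}\rangle=\frac{{\rm scl}(M)}{2}|\omega|^{2}.
\]
Inserting this into the reduced form of ${\rm Tr}_{\fg}r_{\omega^{\sharp}}$ collapses the $-{\rm Ric}^{M}(\nu,\nu)$ and $-\langle R^{M}\rangle$ terms into $-\tfrac{1}{2}{\rm scl}(M)|\omega|^{2}$, leaving the $|B(e_{\mu},\omega^{\sharp})|^{2}$ and $\langle\overline{R}(\omega^{\sharp},\p_{\mu})\p_{\mu},\omega^{\sharp}\rangle$ terms untouched, which is precisely \eqref{eq:trr4}. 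The only genuinely dimension-specific input---and hence the step I regard as the crux---is this last identity, which relies on the three-dimensional fact that the scalar curvature equals the sum over all coordinate planes of the sectional curvatures.
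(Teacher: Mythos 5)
Your proposal is correct and follows essentially the same route as the paper: specialize \eqref{eq:trr2}, use minimality in dimension two to cancel the $A$-terms (your Cayley--Hamilton identity $H^{2}=\tfrac12|H|^{2}\,\mathrm{id}$ is just the paper's eigenvalue argument $\lambda_{1}=-\lambda_{2}$ in different clothing), then apply the Gauss equation for $M\to N$ to get \eqref{eq:trr3} and the three-dimensional decomposition ${\rm Ric}^{M}(\nu,\nu)+K(T_{p}\Sigma)=\tfrac12{\rm scl}(M)$ to get \eqref{eq:trr4}. The signs in your Gauss-equation step and the basis-independence remark for $\sum_{\mu}\langle R^{M}(\omega^{\sharp},\p_{\mu})\p_{\mu},\omega^{\sharp}\rangle$ both check out against the paper's conventions.
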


\begin{proof}
We may assume  $\{\p_{1},\p_{2}\}$ is an orthonormal basis of $T_{p}\Sigma$ satisfying that $A(\p_{i},\p_{j})=\lambda_{i}\delta_{ij}$, where $\lambda_{1}$ and $\lambda_{2}$ are eigenvalues of $A$.  Since $\Sigma$ is a minimal surface, we have $\lambda=\lambda_{1}=-\lambda_{2}$, and hence, we see
\[
2\sum_{\mu=1}^{2}|A(\p_{\mu},\omega^{\sharp})|^{2}-|A|^{2}|\omega|^{2}=0.
\]
Substituting this to \eqref{eq:trr2} and by using the Gauss equation for $M\to N$, we obtain \eqref{eq:trr3}.
Moreover, the Gauss equation for the minimal surface $\Sigma\to M^{3}$ shows that  (see also \cite{ACS})
\[
\sum_{\mu=1}^{n-1}\langle {R}^{M}(\omega^{\sharp},\p_{\mu})\p_{\mu}, \omega^{\sharp}\rangle+{\rm Ric}^{M}(\nu,\nu)|\omega|^{2}=\frac{{\rm scl}(M)}{2}|\omega|^{2}.
\]
Substituting this to \eqref{eq:trr2}, we obtain \eqref{eq:trr4}.
\end{proof}

By Proposition \ref{prop:ind}-(i), we have the following.
\begin{proposition}\label{prop:meth1}
Let $M^{3}$ be a $3$-dimensional Riemannian manifold and $\Sigma$ a closed two-sided minimal surface embedded in $M^{3}$. If there is an isometric immersion $F: M^{3}\to N^{3+k}$ into a compact semi-simple Riemannian symmetric space $N=G/K$ so that ${\rm Tr}_{\fg}r_{\omega^{\sharp}}<0$ for any non-trivial harmonic $1$-form $\omega$ on $\Sigma$, then we have
\[
{\rm index}(\Sigma)\geq \frac{1}{d}b_{1}(\Sigma),
\]
 where $d={\rm dim}G={\rm dim}{\rm Isom}_{0}(N)$.
\end{proposition}

\subsection{An application} We give a simple application of the trace formula \eqref{eq:trr3}. We take the complex projective space $\C P^{2}$ as a symmetric space $N$, and consider a real hypersurface $M^3$ in $\C P^2$. Note that, as a symmetric space, $\C P^{2}$ is isometric to $G/K=SU(3)/S(U(1)\times U(2))$.  Without loss of generality, we may assume that the holomorphic sectional curvature of $\C P^{2}$ is equal to $4$.

Let $S_{r}^{3}$ be the geodesic hypersphere of geodesic radius $r$ in $\C P^{2}(4)$. 
Note that $r\in (0,\pi/2)$ (where $\pi/2$ is equal to the injectivity radius of $\C P^{m}(4)$) and $S_r$ coincides with an orbit of the isotropy subgroup $S(U(1)\times U(2))$. Moreover, $S_r$ is diffeomorphic to the $3$-dimensional sphere $S^3$, and  $S_r$ equipped with the induced metric from $\C P^2(4)$ is isometric to a {\it Berger sphere}.

It is known  that $S_{r}$  is an {\it $\eta$-umbilical} hypersurface in $\C P^{2}(4)$. Namely, for a unit normal vector field $\nu_S$ of $S_{r}$ in $\C P^2(4)$, we define a $1$-form $\eta$ on $S_{r}$ by $\eta(X):=-\langle J\nu_S, X\rangle$, where $J$ is the complex structure on $\C P^{2}$, and then the second fundamental form of $S_{r}$ is given by
\begin{align}\label{eq:umb}
B(X,Y)=\{a\langle X,Y\rangle+b\eta(X)\eta(Y)\}\nu_{S},
\end{align}
with $a=\cot r$ and $b=-\tan r$. We refer to \cite{BV} for details of the above facts. 

By  the Gauss equation and \eqref{eq:umb},  it turns out that the Ricci tensor of $M=S^3_{r}$ is given by (see also \cite[p.359]{BV})
\begin{align}\label{eq:ricS}
{\rm Ric}^{M}(X,Y)=(2a^{2}+4)\langle X,Y\rangle-4 \eta(X)\eta(Y)
\end{align}
for any $X,Y\in T_{p}S_{r}$. If $X$ is a unit tangent vector, then we see
\begin{align*}
{\rm Ric}^{M}(X,X)=(2a^{2}+4)-4(1-|X_{h}|^{2})=2a^{2}+4|X_{h}|^{2}>0,
\end{align*}
where we set $X=X_{h}-\eta(X)J\nu_{S}$. Therefore, any geodesic hypersphere $S_{r}$ has positive Ricci curvature. Moreover, if $r\neq r'$, then $S_{r}$ and $S_{r'}$ are neither isometric nor homothetic. 

Now, let us consider a closed minimal surface $\Sigma$ in $S_{r}^{3}$. We remark that Torralbo \cite{Tora} proved that there exists an embedded closed minimal surface of arbitrary genus in the Berger sphere. Moreover,  any embedded surface  in $S_r^3$ is two-sided and any closed minimal surface embedded in $S^3_{r}$ is unstable since $S_r^3$ is simply-connected and has positive Ricci curvature. 
Applying Proposition \ref{prop:tr1}, we obtain the following result:

\begin{theorem}\label{thm:a1}
Let $S_{r}^{3}$ be the  geodesic hypersphere of radius $r\in (0,\pi/2)$ in $\C P^{2}(4)$. If the geodesic radius $r$ satisfies
\[
0<\tan r<\sqrt{1+\sqrt{3}}=1.652\cdots,
\]
then, it holds that
\[
{\rm index}(\Sigma)\geq \frac{1}{8}b_{1}(\Sigma)
\]
for any closed minimal surface $\Sigma$ embedded in $S_{r}^{3}$.
\end{theorem}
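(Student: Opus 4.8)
The plan is to apply Proposition \ref{prop:ind}(i) to the isometric immersion $S_r^3\hookrightarrow \C P^2(4)=SU(3)/S(U(1)\times U(2))$, taking $\mathcal{G}=\fg=\mathfrak{su}(3)$ equipped with its negative Killing form. Since $\dim\fg=\dim SU(3)=8$, the bound ${\rm index}(\Sigma)\geq\frac18 b_1(\Sigma)$ follows the moment we show ${\rm Tr}_{\fg}r_{\omega^{\sharp}}<0$ for every nontrivial harmonic $1$-form $\omega$ on $\Sigma$. Note that $\Sigma$ is automatically two-sided (as $S_r^3$ is diffeomorphic to $S^3$), so Trace formula I applies, and Proposition \ref{prop:ind}(i) yields the linear bound directly without invoking instability. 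Thus the whole problem reduces to a pointwise estimate of the integrand in \eqref{eq:trr3}.

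First I would substitute the $\eta$-umbilical data. Fixing at $p\in\Sigma$ an orthonormal frame $\{\p_1,\p_2\}$ of $T_p\Sigma$, I insert \eqref{eq:umb} and \eqref{eq:ricS} into \eqref{eq:trr3} and carry out the contraction. The crucial simplification is $ab=\cot r\cdot(-\tan r)=-1$, together with $\sum_{\mu}\eta(\p_\mu)^2=1-\eta(\nu)^2$ (because $\eta$ is a unit $1$-form on $S_r^3$ and $\{\p_1,\p_2,\nu\}$ is an orthonormal frame of $T_pS_r^3$). Setting $u:=1-\eta(\nu)^2\in[0,1]$, a direct computation collapses the integrand of \eqref{eq:trr3} at $p$ to
\begin{align*}
(-2a^2-3u)\,|\omega|^2+(b^2u-2)\,\eta(\omega^{\sharp})^2,\qquad a=\cot r,\ b=-\tan r,
\end{align*}
so that ${\rm Tr}_{\fg}r_{\omega^{\sharp}}$ is the integral over $\Sigma$ of this quantity. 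I expect this contraction to be the most computation-heavy step, but it is routine given the explicit forms of $B$ and ${\rm Ric}^M$.

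The remaining step is to bound this expression uniformly in the angle functions $\eta(\nu)$ and $\eta(\omega^{\sharp})$. By Cauchy--Schwarz, $\eta(\omega^{\sharp})^2\le u\,|\omega|^2$, since $\omega^{\sharp}$ is tangent to $\Sigma$ and the dual vector field of $\eta$ is a unit field whose tangential part has squared length $u$. Discarding the favorable term $-3u\le 0$ gives $(-2a^2-3u)|\omega|^2\le -2a^2|\omega|^2$, hence on the region where $b^2u-2\ge 0$,
\begin{align*}
(-2a^2-3u)|\omega|^2+(b^2u-2)\eta(\omega^{\sharp})^2\le\big(-2a^2+(b^2u-2)u\big)|\omega|^2\le(b^2-2a^2-2)|\omega|^2,
\end{align*}
the last step using that $u\mapsto(b^2u-2)u$ is increasing on $[2/b^2,1]$ and $u\le 1$; on the complementary region $b^2u-2<0$ the $\eta(\omega^{\sharp})^2$-term is nonpositive and the estimate $-2a^2|\omega|^2<0$ is even stronger. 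Thus the integrand is strictly negative wherever $\omega^{\sharp}\neq 0$ as soon as $b^2-2a^2<2$, i.e. $\tan^2 r-2\cot^2 r<2$. Writing $t=\tan^2 r$, this is $t^2-2t-2<0$, whose positive root is $t=1+\sqrt3$, equivalently $\tan r<\sqrt{1+\sqrt3}$. Since a nonzero harmonic form is nonzero almost everywhere, integrating the strict pointwise inequality gives ${\rm Tr}_{\fg}r_{\omega^{\sharp}}<0$, and Proposition \ref{prop:ind}(i) finishes the proof. The main obstacle is organizing the pointwise estimate so that the threshold emerges in the stated closed form: the two angle functions $\eta(\nu)$ and $\eta(\omega^{\sharp})$ vary over $\Sigma$ and must be controlled simultaneously, and the clean constant $1+\sqrt3$ comes precisely from bounding the $|\omega|^2$-coefficient by $-2a^2$ before applying Cauchy--Schwarz.
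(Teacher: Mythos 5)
Your proposal is correct and follows essentially the same route as the paper: apply Proposition \ref{prop:ind}(i) with $\mathcal{G}=\fg=\mathfrak{su}(3)$ ($\dim\fg=8$), substitute the $\eta$-umbilical data \eqref{eq:umb} and the Ricci tensor \eqref{eq:ricS} into the trace formula \eqref{eq:trr3}, and show the integrand is pointwise bounded by a negative multiple of $|\omega|^2$ precisely when $\tan^2 r<1+\sqrt{3}$. Your reduced expression $(-2a^2-3u)|\omega|^2+(b^2u-2)\eta(\omega^\sharp)^2$ agrees with the paper's $\beta(T)$ after the substitution $\eta(\nu)^2=1-u$; the only (immaterial) difference is that you close the estimate with the sharper bound $\eta(\omega^\sharp)^2\le u|\omega|^2$ and a two-case analysis, whereas the paper simply uses $\eta(T)^2\le 1$, both yielding the same threshold.
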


\begin{proof}
We take $M=S_{r}^{3}$ and compute the trace formula \eqref{eq:trr3} given in Proposition \ref{prop:tr1}. 

For a unit tangent vector $T\in T_{p}\Sigma$, we consider the following quantity:
\begin{align*}
\beta(T):=-{\rm Ric}^{M}(\nu,\nu)+\sum_{\mu=1}^{2}\Big(2|B(\p_{\mu},T)|^{2}-\langle  B(\p_{\mu},\p_{\mu}), B(T,T)\rangle\Big)
\end{align*}
By \eqref{eq:trr3}, if $\beta(T)<0$ for any unit vector $T$ and $p\in \Sigma$, then we have ${\rm Tr}_{\fg}q_{\omega^{\sharp}}<0$. Thus, the result follows from Proposition \ref{prop:meth1} since ${\rm dim}G={\rm dim}SU(3)=8$.

By using \eqref{eq:umb}, we have
\begin{align*}
\sum_{\mu=1}^{2}\Big{(}2|B(\p_{\mu},T)|^{2}-\langle B(\p_{\mu},\p_{\mu}), B(T,T)\rangle\Big{)}
=1-\eta(\nu)^{2}-b^{2}\eta(T)^{2}\eta(\nu)^{2}+(b^{2}-2)\eta(T)^{2}.
\end{align*}
where we used the relation $\sum_{\mu=1}^{2}\eta(\p_{\mu})^{2}=1-\eta(\nu)^{2}$, $ab=-1$ and $|T|^{2}=1$. Moreover, by  \eqref{eq:ricS}, we have
\[
{\rm Ric}^{M}(\nu,\nu)=(2a^{2}+4)-4\eta(\nu)^{2}.
\]
Therefore, we see
\[
\beta(T)=-2a^{2}-3(1-\eta(\nu)^{2})-b^{2}\eta(T)^{2}\eta(\nu)^{2}+(b^{2}-2)\eta(T)^{2}.
\]
%
Since $\eta(T)^{2}=\langle J\nu_{S},T\rangle^{2}\leq |J\nu_{S}|^{2}|T|^{2}\leq 1$ for any unit vector $T$,  we see
\[
\beta(T)\leq -2a^{2}+b^{2}-2=-2\cot^{2}r+\tan^{2}r-2.
\]
Moreover, the inequality $-2\cot^{2}r+\tan^{2}r-2<0$ satisfies if $0<\tan^{2}r<1+\sqrt{3}$. Thus, if this is the case, we have $\beta(T)<0$ for any $T\in T_{p}\Sigma$ and $p\in \Sigma$, and the desired conclusion holds.
\end{proof}

\begin{remark}
{\rm 
By the result of \cite{Savo}, we have ${\rm index}(\Sigma)\geq Cb_{1}(\Sigma)$ for any closed minimal hypersurface $\Sigma$ in  the standard sphere $(S^{3},g_{st})$. Moreover, it follows from \cite[Theorem A]{GMR} that there exists an $\epsilon>0$ such that for any smooth metric $g'$ on $S^{3}$ satisfying that $\|g_{st}-g'\|_{C^{2,\lambda}}<\epsilon$, $\lambda\in (0,1)$,  a similar index estimate holds even in $(S^{3},g')$. Since the Berger sphere is actually a deformation of the standard sphere, one may obtain the index estimate in the Berger sphere by using \cite[Theorem A]{GMR} if the Berger metric is sufficiently close to the standard metric.  
}
\end{remark}


\section{Trace formula II}\label{sec:tr2}
Continued from the previous section, let $N^{n+k}=G/K$ be a compact semi-simple RSS and suppose that a Riemannian manifold $M^{n}$ is isometrically immersed into $N^{n+k}$ via the map $F: M\to N$.  Note that we can assume that $M=N$ and $F$ is the identity map. Let $f:\Sigma^{n-1}\to M^{n}$ be a minimal immersion, and throughout  this section, we suppose that $f$ has a trivial normal bundle. In this section, we extend the trace formula \cite[Proposition 2]{ACS} to  compact semi-simple symmetric spaces.

\subsection{Trace formula}
 We denote the $\R$-linear space of exterior algebra  of second degree on the Lie algebra $\fg$ by $\wedge^{2}\fg$, namely, $\wedge^{2}\fg$ is a vector space spanned by elements of the form $X\wedge Y$ for $X,Y\in \fg$. For an element  $v\in \wedge^{2}\fg$, we define a smooth function on $\Sigma$ as follows: By using the inner product $\langle,\rangle_{\fg}$ on $\fg$,  we define a positive-definite inner product on $\wedge^{2}\fg$  by
\[
\langle X\wedge Y, Z\wedge W\rangle_{\wedge^{2}\fg}:=\langle X,Z\rangle_{\fg}\langle  Y,W\rangle_{\fg}-\langle X,W\rangle_{\fg}\langle  Y,Z\rangle_{\fg}
\]
for elements $X\wedge Y, Z\wedge W\in \wedge^{2}\fg$ and we linearly extend it to whole $\wedge^{2}\fg$.

Let $\{\theta_{i}\}_{i=1}^{d}$ be a basis of $\fg$. Then the associated basis of $\wedge^{2}\fg$ is given by $\{\theta_{i}\wedge \theta_{j}\}_{1\leq i<j\leq d}$. For each $p\in \Sigma$, we define a linear map $\Pi: \wedge^{2}\fg\to\Gamma(\wedge^{2}TN)$ by
\[\Pi(\theta_{i}\wedge \theta_{j})(p)=\theta_{i}^{\dagger}(p)\wedge \theta_{j}^{\dagger}(p)\]
 and we linearly extend the map to whole $\wedge^{2}\fg$. Note that the definition of $\Pi$ does not depend on the choice of  basis of $\fg$.

 Now, we consider a minimal immersion $f:\Sigma^{n-1}\to M^{n}$ with trivial normal bundle.  We fix a unit normal vector field $\nu$ along $f$. Then, we consider a map
 \[
\Phi: \Gamma(T\Sigma)\times \wedge^2\fg\to \Gamma(\nu\Sigma),\quad \Phi(T,v):=\varphi_{T,v}\cdot \nu,
\]
where $\varphi_{T,v}\in C^{\infty}(\Sigma)$ is defined by 
\begin{align}\label{def:phi}
\varphi_{T,v}=\langle \nu\wedge T, \Pi(v)\rangle.
\end{align}
Also, we consider the quadratic form on $\wedge^{2}\fg$ by
\[
q_{T}(v,w):=Q(\varphi_{T,v},\varphi_{T,w})
\]
for $T\in \Gamma(T\Sigma)$.
The aim of this section is to compute the trace of $q_{T}$ over $\wedge^{2}\fg$.

\begin{remark}
{\rm If $M=N$ (i.e. $F=id_{M}$), it turns out that the test function \eqref{def:phi} coincides with the one considered in \cite{MR}. See Appendix \ref{A:MR} for the detail.}
\end{remark}

For each $p\in \Sigma$, we define a quadratic form $q_{T,p}$ on $\wedge^{2}\fg$ by
\[
q_{T,p}(v,w):=\langle \mathcal{J}(\varphi_{T,v}), \varphi_{T,w}\rangle(p)
\]
so that 
\begin{align}\label{eq:trtr}
{\rm Tr}_{\wedge^{2}\fg} q_{T}=-\int_{\Sigma}{\rm Tr}_{\wedge^{2}\fg}q_{T,p}\,d\mu_{\Sigma}.
\end{align}
In the following, we abbreviate the fixed index $T$, e.g. $q_{\theta_{i}}=q_{T,\theta_{i}}$ etc.

\begin{proposition}\label{prop:tr}
 For any smooth tangent vector filed $T$ of $\Sigma$, we have
\begin{align}\label{eq:trq0}
{\rm Tr}_{\wedge^{2}\fg} q_{T,p}&=-(\Delta_{1}T^{\flat})(T)\\
&\quad -\sum_{\mu=1}^{n-1}\Big(|B(\p_{\mu},T)|^{2}+|B(\p_{\mu},\nu)|^{2}|T|^{2}\Big)+\sum_{\mu=1}^{n-1}\langle {R}^{M}(T,\p_{\mu})\p_{\mu}, T\rangle+{\rm Ric}^{M}(\nu,\nu)|T|^{2}\nonumber\\
&\quad -\sum_{\mu=1}^{n-1}\Big(\langle \overline{R}(T,\p_{\mu})\p_{\mu}, T\rangle+\langle \overline{R}(\nu, \p_{\mu})\p_{\mu},\nu\rangle|T|^{2}\Big),\nonumber
\end{align}
where $T^{\flat}:=\langle T, \cdot \rangle|_{\Sigma}$, $\Delta_{1}$ is the Hodge Laplacian acting on $\Omega^{1}(\Sigma)$.
\end{proposition}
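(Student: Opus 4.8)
The plan is to run the same pointwise computation used for the first trace formula, but now over $\wedge^{2}\fg$ with test functions built from $\nu\wedge T$. Fix $p\in\Sigma$. By Lemma \ref{lem:key2} the decomposition $\fg=\fk_{p}\oplus\fn_{p}$ is orthogonal, so I would choose an orthonormal basis $\{X_{a}\}\cup\{Y_{i}\}_{i=1}^{n+k}$ of $\fg$ with $\{X_{a}\}\subset\fk_{p}$ and $\{Y_{i}\}\subset\fn_{p}$; the wedges of distinct basis elements then form an orthonormal basis of $\wedge^{2}\fg$. Since $X_{a}^{\dagger}(p)=0$ by Lemma \ref{lem:key1}(i), every basis wedge involving some $X_{a}$ has $\Pi(\cdot)(p)=0$, hence $\varphi_{\cdot}(p)=0$ and so $q_{T,p}(\cdot,\cdot)=\mathcal{J}(\varphi)(p)\cdot\varphi(p)=0$. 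Thus only the pairs $Y_{i}\wedge Y_{j}$ contribute, and, writing $\phi_{i}:=\langle\nu,Y_{i}^{\dagger}\rangle$, $\psi_{i}:=\langle T,Y_{i}^{\dagger}\rangle$ and $\varphi_{ij}:=\varphi_{Y_{i}\wedge Y_{j}}=\phi_{i}\psi_{j}-\phi_{j}\psi_{i}$, the trace reduces to $\sum_{i<j}\langle\mathcal{J}(\varphi_{ij}),\varphi_{ij}\rangle(p)$. Throughout I would use the two facts from Lemma \ref{lem:key1}(ii): $\onab Y_{i}^{\dagger}(p)=0$ and $\{Y_{i}^{\dagger}(p)\}$ is an orthonormal basis of $T_{p}N$, so all completeness relations hold at $p$.

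Next I would split $\mathcal{J}(\varphi)=\Delta\varphi+({\rm Ric}^{M}(\nu,\nu)+|A|^{2})\varphi$ and treat the two orders separately. For the zeroth-order part, the purely algebraic identity
\[
\sum_{i<j}\varphi_{ij}^{2}=\Big(\sum_{i}\phi_{i}^{2}\Big)\Big(\sum_{i}\psi_{i}^{2}\Big)-\Big(\sum_{i}\phi_{i}\psi_{i}\Big)^{2}=:ab-c^{2}
\]
holds as functions on $\Sigma$, and at $p$ the completeness relations give $a(p)=|\nu|^{2}=1$, $b(p)=|T|^{2}$, $c(p)=\langle\nu,T\rangle=0$, whence $\sum_{i<j}\varphi_{ij}^{2}(p)=|T|^{2}$. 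This produces the term $({\rm Ric}^{M}(\nu,\nu)+|A|^{2})|T|^{2}$, whose $|A|^{2}|T|^{2}$ part will later cancel.

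For the Laplacian part I would write $\sum_{i<j}\varphi_{ij}\Delta\varphi_{ij}=\tfrac12\Delta(ab-c^{2})-\sum_{i<j}|\nabla\varphi_{ij}|^{2}$. At $p$ the first term simplifies dramatically: one checks $\nabla a(p)=0$ since $\p_{\mu}a(p)=2\langle\nu,\onab_{\p_{\mu}}\nu\rangle=\p_{\mu}|\nu|^{2}=0$, and $c(p)=0$ together with $\nabla c(p)=0$ since $\p_{\mu}c(p)=\langle\onab_{\p_{\mu}}\nu,T\rangle+\langle\nu,\onab_{\p_{\mu}}T\rangle=\p_{\mu}\langle\nu,T\rangle=0$. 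Hence $\tfrac12\Delta(ab-c^{2})(p)=\tfrac12(\Delta a\cdot|T|^{2}+\Delta b)$. The Laplacians $\Delta a$ and $\Delta b$ are then computed exactly as in the first trace formula: expanding $\Delta(\sum\cdot^{2})$, applying the Killing identity \eqref{eq:kill} and the Bochner--Weitzenb\"ock formula \eqref{eq:BW}, and decomposing $\onab_{\p_{\mu}}T=\nabla_{\p_{\mu}}T+A(\p_{\mu},T)+B(\p_{\mu},T)$ and $\onab_{\p_{\mu}}\nu=\nabla^{M}_{\p_{\mu}}\nu+B(\p_{\mu},\nu)$ with $\sum_{\mu}|\nabla^{M}_{\p_{\mu}}\nu|^{2}=|A|^{2}$. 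This gives $\Delta a(p)=-2\sum_{\mu}\langle\overline{R}(\nu,\p_{\mu})\p_{\mu},\nu\rangle$ and the analogous expression for $\Delta b$ involving $-(\Delta_{1}T^{\flat})(T)$, $\sum_{\mu}\langle R^{M}(T,\p_{\mu})\p_{\mu},T\rangle$ and $-\sum_{\mu}\langle\overline{R}(T,\p_{\mu})\p_{\mu},T\rangle$.

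The remaining piece $\sum_{i<j}|\nabla\varphi_{ij}|^{2}=\tfrac12\sum_{\mu}\sum_{i,j}(\p_{\mu}\varphi_{ij})^{2}$ is the main bookkeeping obstacle. I would expand each $\p_{\mu}\varphi_{ij}$ and reduce the resulting bilinear sums over the basis $\{Y_{i}^{\dagger}(p)\}$ using completeness; most of them collapse thanks to the orthogonality facts $\sum_{i}\phi_{i}\psi_{i}=\langle\nu,T\rangle=0$, $\sum_{i}\phi_{i}\,\p_{\mu}\phi_{i}=\langle\nu,\onab_{\p_{\mu}}\nu\rangle=0$, together with $\langle\onab_{\p_{\mu}}\nu,T\rangle^{2}=\langle\onab_{\p_{\mu}}T,\nu\rangle^{2}=|A(\p_{\mu},T)|^{2}$. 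The net outcome is $\sum_{i<j}|\nabla\varphi_{ij}|^{2}=\sum_{\mu}\big(|\onab_{\p_{\mu}}\nu|^{2}|T|^{2}+|\onab_{\p_{\mu}}T|^{2}\big)-2\sum_{\mu}|A(\p_{\mu},T)|^{2}$. Assembling the three contributions, the $|A|^{2}|T|^{2}$, $|\nabla T|^{2}$ and $\sum_{\mu}|A(\p_{\mu},T)|^{2}$ terms all cancel and the stated formula \eqref{eq:trq0} drops out. The only genuinely delicate point is this last expansion: keeping track of exactly which of the several bilinear traces over $\{Y_{i}^{\dagger}(p)\}$ survive, and recognizing that the survivors reduce to the second fundamental forms precisely because $\langle\nu,T\rangle\equiv0$ and $|\nu|\equiv1$.
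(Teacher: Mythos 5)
Your proposal is correct: I checked the assembly and every term of \eqref{eq:trq0} comes out right, including the cancellation of $|\nabla T|^{2}$, $|A|^{2}|T|^{2}$ and $\sum_{\mu}|A(\p_{\mu},T)|^{2}$. The overall skeleton is the same as the paper's: both proofs fix $p$, use Lemma \ref{lem:key2} to choose the adapted orthonormal basis $\{X_{a}\}\cup\{Y_{i}\}$ so that only the $Y_{i}\wedge Y_{j}$ wedges contribute, and both feed in Lemma \ref{lem:key1}, the Killing identity \eqref{eq:kill} and the Bochner--Weitzenb\"ock formula \eqref{eq:BW}. Where you genuinely diverge is in the organization of the second-order term. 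The paper never forms $\frac12\Delta(\varphi^{2})$; instead it splits $\Delta\varphi_{Y_i\wedge Y_j}\cdot\varphi_{Y_i\wedge Y_j}$ into the two pieces $\langle \onab\onab(\nu\wedge T),Y_i^{\dagger}\wedge Y_j^{\dagger}\rangle\varphi_{ij}$ and $\langle\nu\wedge T,\onab\onab(Y_i^{\dagger}\wedge Y_j^{\dagger})\rangle\varphi_{ij}$ (the cross term dying because $\onab(Y_i^{\dagger}\wedge Y_j^{\dagger})(p)=0$), and collapses each sum over $i<j$ with the bilinear completeness relation \eqref{eq:lem}, of which your Lagrange identity $\sum_{i<j}\varphi_{ij}^{2}=ab-c^{2}$ is the diagonal special case. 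Your route --- $\sum\varphi\Delta\varphi=\tfrac12\Delta(ab-c^{2})-\sum|\nabla\varphi_{ij}|^{2}$, with $\nabla a(p)=\nabla c(p)=0$ and $c(p)=0$ killing all mixed terms --- reduces the wedge computation to two copies of the Section \ref{sec:tr1} scalar computation ($\Delta a$ and $\Delta b$), which is tidier on the curvature side and in particular avoids the paper's explicit expansion of $\langle\nu\wedge T,\overline{R}(Y_i^{\dagger},\p_{\mu})\p_{\mu}\wedge Y_j^{\dagger}\rangle\varphi_{ij}$; the price is the separate gradient bookkeeping for $\sum_{i<j}|\nabla\varphi_{ij}|^{2}$, which you correctly resolve via the bilinear Lagrange identity and the vanishing of $\langle\onab_{\p_{\mu}}\nu,\nu\rangle$ and $\langle T,\nu\rangle$. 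One presentational caution: $a$, $b$, $c$ agree with $|\nu|^{2}$, $|T|^{2}$, $\langle\nu,T\rangle$ only at $p$ (since $\{Y_i^{\dagger}\}$ is orthonormal and $\onab Y_i^{\dagger}=0$ only there), so identities like ``$\p_{\mu}a=\p_{\mu}|\nu|^{2}$'' should be asserted only at $p$ after using $\onab Y_i^{\dagger}(p)=0$ --- which is how you in fact use them, so no gap results.
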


\begin{proof}
 By Lemma \ref{lem:key2}, the decomposition $\fg=\fk_{p}\oplus \fn_{p}$ is orthogonal w.r.t. $\langle,\rangle_{\fg}$,  and hence, we can take an orthonormal basis $\{X_{1},\ldots X_{d-(n+k)}, Y_{1},\ldots, Y_{n+k}\}$ of $\fg$ so that $\{X_{1},\ldots, X_{d-(n+k)}\}$ is a basis of $\fk_{p}$ and $\{Y_{1},\ldots, Y_{n+k}\}$ is a basis of $\fn_{p}$. Then, the orthonormal basis of $\wedge^{2}\fg$ is given by $X_{l}\wedge X_{m}, X_{l}\wedge Y_{i}, Y_{i}\wedge Y_{j}$ for $1\leq l<m\leq d-(n+k)$ and $1\leq i<j\leq n+k$.
Since $X_{l}^{\dagger}(p)=0$ for any $l=1,\ldots, d-(n+k)$ by Lemma \ref{lem:key1}, we see
\begin{align*}
\varphi_{X_{l}\wedge X_{m}}(p)=\varphi_{X_{l}\wedge Y_{i}}(p)=0 
\end{align*}
for any $l,m=1,\ldots, d-(n+k)$ and $i=1,\ldots, n+k$. In particular, we have
\begin{align}\label{eq:tr0}
{\rm Tr}_{\wedge^{2}\fg}q_{p}&=\sum_{1\leq i<j\leq n+k}  q_{p}(Y_{i}\wedge Y_{j}, Y_{i}\wedge Y_{j})
\end{align}
Recall that, under the identification $\fn_{p}\simeq T_{p}N$, $\{Y_{1}^{\dagger}(p),\ldots, Y_{n+k}^{\dagger}(p)\}$ becomes an orthonormal basis of $T_{p}N$. We also take a geodesic normal coordinate of the hypersurface $\Sigma$ around $p$, and denote the coordinate basis by $\{\p_{\mu}\}_{\mu=1}^{n-1}$. 

By \eqref{eq:tr0}, we have
\begin{align}\label{eq:j1}
{\rm Tr}_{\wedge^{2}\fg}q_{p}
&=\sum_{1\leq i<j\leq n+k} \Delta\varphi_{Y_{i}\wedge Y_{j}}\cdot \varphi_{Y_{i}\wedge Y_{j}}(p)+({\rm Ric}^{M}(\nu,\nu)+|A|^{2})\varphi_{Y_{i}\wedge Y_{j}}^{2}(p).
\end{align}

Note that we  have
\begin{align}\label{eq:lem}
&\sum_{1\leq i<j\leq n+k}\langle Z\wedge W, Y_{i}^{\dagger}\wedge Y_{j}^{\dagger}\rangle\cdot \varphi_{Y_{i}\wedge Y_{j}} (p)=\langle Z,\nu\rangle\langle W,T\rangle-\langle Z,T\rangle\langle W,\nu\rangle
\end{align}
 for any $Z,W\in T_{p}N$.
Indeed, we see
\begin{align}\label{eq:lem1}
&\langle Z\wedge W, Y_{i}^{\dagger}\wedge Y_{j}^{\dagger}\rangle\cdot \varphi_{Y_{i}\wedge Y_{j}}\\
&=\Big(\langle Z, Y_{i}^{\dagger}\rangle\langle W,Y_{j}^{\dagger}\rangle-\langle Z, Y_{j}^{\dagger}\rangle\langle W,Y_{i}^{\dagger}\rangle\Big)
\Big(\langle \nu, Y_{i}^{\dagger}\rangle\langle T,Y_{j}^{\dagger}\rangle-\langle \nu, Y_{j}^{\dagger}\rangle\langle T,Y_{i}^{\dagger}\rangle\Big)\nonumber\\
&=\langle Z, Y_{i}^{\dagger}\rangle\langle W,Y_{j}^{\dagger}\rangle\langle \nu, Y_{i}^{\dagger}\rangle\langle T,Y_{j}^{\dagger}\rangle
-\langle Z, Y_{i}^{\dagger}\rangle\langle W,Y_{j}^{\dagger}\rangle \langle \nu, Y_{j}^{\dagger}\rangle\langle T,Y_{i}^{\dagger}\rangle\nonumber\\
&\quad-\langle Z, Y_{j}^{\dagger}\rangle\langle W,Y_{i}^{\dagger}\rangle\langle \nu, Y_{i}^{\dagger}\rangle\langle T,Y_{j}^{\dagger}\rangle
+\langle Z, Y_{j}^{\dagger}\rangle\langle W,Y_{i}^{\dagger}\rangle\langle \nu, Y_{j}^{\dagger}\rangle\langle T,Y_{i}^{\dagger}\rangle. \nonumber
\end{align}
On the other hand, we have 
\begin{align}\label{eq:lem2}
&\sum_{1\leq i<j\leq n+k}\langle Z\wedge W, Y_{i}^{\dagger}\wedge Y_{j}^{\dagger}\rangle\cdot \varphi_{Y_{i}\wedge Y_{j}} (p)
=\frac{1}{2}\sum_{i=1}^{n+k}\sum_{j=1}^{n+k}\langle Z\wedge W, Y_{i}^{\dagger}\wedge Y_{j}^{\dagger}\rangle\cdot \varphi_{Y_{i}\wedge Y_{j}} (p)
\end{align}
since $\langle Z\wedge W, Y_{i}^{\dagger}\wedge Y_{j}^{\dagger}\rangle\cdot \varphi_{Y_{i}\wedge Y_{j}}=\langle Z\wedge W, Y_{j}^{\dagger}\wedge Y_{i}^{\dagger}\rangle\cdot \varphi_{Y_{j}\wedge Y_{i}}$ and $Y_{i}^{\dagger}\wedge Y_{i}^{\dagger}=0$.
Substituting \eqref{eq:lem1} to \eqref{eq:lem2} and by using the fact that $\{Y_{i}^{\dagger}(p)\}_{i}$ is an orthonormal basis of $T_{p}N$, we obtain \eqref{eq:lem}.

In particular, in the last term of \eqref{eq:j1}, we have
\begin{align}\label{eq:j12}
\sum_{1\leq i<j\leq n+k} \varphi_{Y_{i}\wedge Y_{j}}^{2}=|\nu|^{2}|T|^{2}-\langle \nu,T\rangle^{2}=|T|^{2}
\end{align}
since $|\nu|^{2}=1$ and $\langle \nu,T\rangle=0$.

We shall consider the first term of the RHS of \eqref{eq:j1}. 
Since $\onab Y_{i}^{\dagger}(p)=0$ for any $i$ by Lemma \ref{lem:key1}, we have $\onab(Y_{i}^{\dagger}\wedge Y_{j}^{\dagger})(p)=\onab Y_{i}^{\dagger}\wedge Y_{j}^{\dagger}(p)+Y_{i}^{\dagger}\wedge \onab Y_{j}^{\dagger}(p)=0$. Therefore, we see
\begin{align}
&\Delta\varphi_{Y_{i}\wedge Y_{j}}(p)\cdot \varphi_{Y_{i}\wedge Y_{j}}(p)\nonumber\\
&=\sum_{\mu=1}^{n-1}\p_{\mu}\p_{\mu}\langle \nu\wedge T, Y_{i}^{\dagger}\wedge Y_{j}^{\dagger}\rangle(p)\cdot \varphi_{Y_{i}\wedge Y_{j}}(p)\nonumber\\
\label{eq:lap}
&=\sum_{\mu=1}^{n-1}\langle \onab_{\p_{\mu}}\onab_{\p_{\mu}}(\nu\wedge T), Y_{i}^{\dagger}\wedge Y_{j}^{\dagger}\rangle(p)\cdot \varphi_{Y_{i}\wedge Y_{j}}(p)
+\sum_{\mu=1}^{n-1}\langle \nu\wedge T, \onab_{\p_{\mu}}\onab_{\p_{\mu}}(Y_{i}^{\dagger}\wedge Y_{j}^{\dagger})\rangle(p)\cdot \varphi_{Y_{i}\wedge Y_{j}}(p),
\end{align}

We consider the first term  of \eqref{eq:lap}. We have
\begin{align}\label{eq:lap2}
\onab_{\p_{\mu}}\onab_{\p_{\mu}}(\nu\wedge T)
=(\onab_{\p_{\mu}}\onab_{\p_{\mu}}\nu)\wedge T+2\onab_{\p_{\mu}}\nu\wedge \onab_{\p_{\mu}}T+\nu\wedge (\onab_{\p_{\mu}}\onab_{\p_{\mu}}T),
\end{align}
and hence, by \eqref{eq:lem}, we obtain
\begin{align*}
&\sum_{1\leq i<j\leq n+k} \langle \onab_{\p_{\mu}}\onab_{\p_{\mu}}(\nu\wedge T), Y_{i}^{\dagger}\wedge Y_{j}^{\dagger}\rangle\cdot \varphi_{Y_{i}\wedge Y_{j}} (p)\\
&=\langle \onab_{\p_{\mu}}\onab_{\p_{\mu}}\nu,\nu\rangle |T|^{2}+2\langle \onab_{\p_{\mu}}\nu,T\rangle^{2}
+\langle \onab_{\p_{\mu}}\onab_{\p_{\mu}}T,T\rangle.
\end{align*}
where we used the facts $\langle \nu, T\rangle=0$ and  $|\nu|^{2}=1$ along $\Sigma$ which also imply $\langle \onab_{\p_{\mu}}\nu,T\rangle=-\langle \nu, \onab_{\p_{\mu}}T\rangle$ and $\langle \onab_{\p_{\mu}}\nu,\nu\rangle=0$. 
Moreover, we have
\begin{align*}
\sum_{\mu=1}^{n-1}\langle \onab_{\p_{\mu}}\onab_{\p_{\mu}}\nu,\nu\rangle |T|^{2}
&=\sum_{\mu=1}^{n-1}-|\onab_{\p_{\mu}}\nu|^{2} |T|^{2}\quad (\textup{since $\langle \onab_{\p_{\mu}}\nu,\nu\rangle=0$ along $\Sigma$})\\
&=\sum_{\mu=1}^{n-1}(-|\nabla^{M}_{\p_{\mu}}\nu|^{2}-|B(\p_{\mu}, \nu)|^{2})|T|^{2}
=-|A|^{2}|T|^{2}-\sum_{\mu=1}^{n-1}|B(\p_{\mu}, \nu)|^{2}|T|^{2}.\\
2\sum_{\mu=1}^{n-1}\langle \onab_{\p_{\mu}}\nu,T\rangle^{2}
&=2\sum_{\mu=1}^{n-1}\langle \nabla^{M}_{\p_{\mu}}\nu,T\rangle^{2}
=2\sum_{\mu=1}^{n-1}|A(\p_{\mu},T)|^{2}.\\
 \sum_{\mu=1}^{n-1}\langle \onab_{\p_{\mu}}\onab_{\p_{\mu}}T,T\rangle
& =\frac{1}{2}\Delta |T|^{2}-\sum_{\mu=1}^{n-1}|\onab_{\p_{\mu}}T|^{2}\\
&=\frac{1}{2}\Delta |T|^{2}-|\nabla T|^{2}-\sum_{\mu=1}^{n-1}|A(\p_{\mu},T)|^{2}-\sum_{\mu=1}^{n-1}|B(\p_{\mu},T)|^{2}.
\end{align*}
Summing up these, we obtain
\begin{align}\label{eq:lap1}
&\sum_{1\leq i<j\leq m}\sum_{\mu=1}^{n-1} \langle \onab_{\p_{\mu}}\onab_{\p_{\mu}}(\nu\wedge T), Y_{i}^{\dagger}\wedge Y_{j}^{\dagger}\rangle\cdot \varphi_{Y_{i}\wedge Y_{j}} (p)\\
&=\frac{1}{2}\Delta |T|^{2}-|\nabla T|^{2}+\sum_{\mu=1}^{n-1}|A(\p_{\mu},T)|^{2}-|A|^{2}|T|^{2}
-\sum_{\mu=1}^{n-1}\Big(|B(\p_{\mu},T)|^{2}+|B(\p_{\mu},\nu)|^{2}|T|^{2}\Big).\nonumber
\end{align}

Next, we consider the second term of \eqref{eq:lap}.
Since $\onab Y_{i}^{\dagger}(p)=0$, we have
\begin{align*}
\onab_{\partial_{\mu}}\onab_{\partial_{\mu}}(Y_{i}^{\dagger}\wedge Y_{j}^{\dagger})(p)
=\{(\onab_{\partial_{\mu}}\onab_{\partial_{\mu}}Y_{i}^{\dagger})\wedge Y_{j}^{\dagger}+Y_{i}^{\dagger}\wedge (\onab_{\partial_{\mu}}\onab_{\partial_{\mu}}Y_{j}^{\dagger})\}(p).
\end{align*}
Moreover, by \eqref{eq:kill},
we obtain 
\begin{align}\label{eq:nn}
\onab_{\partial_{\mu}}\onab_{\partial_{\mu}}(Y_{i}^{\dagger}\wedge Y_{j}^{\dagger})(p)
=-(\overline{R}(Y_{i}^{\dagger},\p_{\mu})\p_{\mu})\wedge Y_{j}^{\dagger}(p)+(\overline{R}(Y_{j}^{\dagger},\p_{\mu})\p_{\mu})\wedge Y_{i}^{\dagger}(p).
\end{align}
Since $\varphi_{Y_{i}\wedge Y_{j}}=-\varphi_{Y_{j}\wedge Y_{i}}$, the second term of \eqref{eq:lap} becomes
\begin{align}\label{eq:lap20}
&\sum_{1\leq i<j\leq n+k} \sum_{\mu=1}^{n-1}\langle \nu\wedge T, \onab_{\p_{\mu}}\onab_{\p_{\mu}}(Y_{i}^{\dagger}\wedge Y_{j}^{\dagger})\rangle\cdot \varphi_{Y_{i}\wedge Y_{j}} (p)\\
&=\sum_{\mu=1}^{n-1}\sum_{i=1}^{n+k}\sum_{j=1}^{n+k}-\langle \nu\wedge T, \overline{R}(Y_{i}^{\dagger},\p_{\mu})\p_{\mu}\wedge Y_{j}^{\dagger}\rangle \cdot \varphi_{Y_{i}\wedge Y_{j}} (p). \nonumber
\end{align}
Here, we have
\begin{align*}
\langle \nu\wedge T, \overline{R}(Y_{i}^{\dagger},\p_{\mu})\p_{\mu}\wedge Y_{j}^{\dagger}\rangle
&=
\langle \nu, \overline{R}(Y_{i}^{\dagger},\p_{\mu})\p_{\mu}\rangle\langle T, Y_{j}^{\dagger}\rangle
-\langle T, \overline{R}(Y_{i}^{\dagger},\p_{\mu})\p_{\mu}\rangle\langle \nu, Y_{j}^{\dagger}\rangle\\
\varphi_{Y_{i}\wedge Y_{j}}
&=\langle \nu, Y_{i}^{\dagger}\rangle\langle T, Y_{j}^{\dagger}\rangle-\langle T, Y_{i}^{\dagger}\rangle\langle \nu, Y_{j}^{\dagger}\rangle
\end{align*}
and hence, 
\begin{align*}
&-\langle \nu\wedge T, \overline{R}(Y_{i}^{\dagger},\p_{\mu})\p_{\mu}\wedge Y_{j}^{\dagger}\rangle \cdot \varphi_{Y_{i}\wedge Y_{j}}\\
&=-\langle \nu, \overline{R}(Y_{i}^{\dagger},\p_{\mu})\p_{\mu}\rangle\langle \nu, Y_{i}^{\dagger}\rangle\langle T, Y_{j}^{\dagger}\rangle^{2}
+\langle \nu, \overline{R}(Y_{i}^{\dagger},\p_{\mu})\p_{\mu}\rangle\langle \nu, Y_{j}^{\dagger}\rangle\langle T, Y_{i}^{\dagger}\rangle\langle T, Y_{j}^{\dagger}\rangle\\
&\quad +\langle T, \overline{R}(Y_{i}^{\dagger},\p_{\mu})\p_{\mu}\rangle\langle \nu, Y_{i}^{\dagger}\rangle\langle \nu, Y_{j}^{\dagger}\rangle\langle T, Y_{j}^{\dagger}\rangle
-\langle T, \overline{R}(Y_{i}^{\dagger},\p_{\mu})\p_{\mu}\rangle\langle \nu, Y_{j}^{\dagger}\rangle^{2}\langle T, Y_{i}^{\dagger}\rangle.
\end{align*}
Summing up for $i,j=1,\ldots, n+k$, \eqref{eq:lap20} shows that
\begin{align}\label{eq:lap2}
&\sum_{1\leq i<j\leq n+k} \sum_{\mu=1}^{n-1}\langle \nu\wedge T, \onab_{\p_{\mu}}\onab_{\p_{\mu}}(Y_{i}^{\dagger}\wedge Y_{j}^{\dagger})\rangle\cdot \varphi_{Y_{i}\wedge Y_{j}} (p)\\
&=\sum_{\mu=1}^{n-1}\Big(-\langle \overline{R}(\nu, \p_{\mu})\p_{\mu},\nu\rangle|T|^{2}-\langle \overline{R}(T,\p_{\mu})\p_{\mu}, T\rangle\Big).\nonumber
\end{align}
since $\langle T,\nu\rangle=0$ and $|\nu|^{2}=1$.

By \eqref{eq:j1}, \eqref{eq:j12}, \eqref{eq:lap}, \eqref{eq:lap1} and \eqref{eq:lap2}, we obtain 
\begin{align}\label{eq:trq1}
{\rm Tr}_{\wedge^{2}\fg} q_{p}&=\frac{1}{2}\Delta |T|^{2}-|\nabla T|^{2}+\sum_{\mu=1}^{n-1}|A(\p_{\mu},T)|^{2}
-\sum_{\mu=1}^{n-1}\Big(|B(\p_{\mu},T)|^{2}+|B(\p_{\mu},\nu)|^{2}|T|^{2}\Big)\\
&\quad -\sum_{\mu=1}^{n-1}\Big(\langle \overline{R}(T,\p_{\mu})\p_{\mu}, T\rangle+\langle \overline{R}(\nu, \p_{\mu})\p_{\mu},\nu\rangle|T|^{2}\Big)+{\rm Ric}^{M}(\nu,\nu)|T|^{2}.\nonumber
\end{align}

Finally, substituting the identity \eqref{eq:BW} to \eqref{eq:trq1}, the term $\sum_{\mu=1}^{n-1}| A(\p_{\mu},T)|^2$ in \eqref{eq:trq1} is cancelled out, and we obtain the desired formula \eqref{eq:trq0}.
\end{proof}

Now, we consider a special situation. Let $\omega$ be a harmonic $1$-form on $\Sigma$. We take $T=\omega^{\sharp}$ the metric dual of $\omega$. Then, we have $\Delta_{1}T^{\flat}=\Delta_{1}\omega=0$, and hence, by \eqref{eq:trtr}--\eqref{eq:trq0}, we obtain the following trace formula.

\begin{theorem}\label{thm:tr}
Suppose a Riemannian manifold $M^{n}$ is isometrically immersed into a compact semi-simple Riemannian symmetric space $N^{n+k}$, and  $f: \Sigma^{n-1}\to M^{n}$ is a minimal immersion with trivial normal bundle. Then, we have
\begin{align}
\label{eq:tr1}
{\rm Tr}_{\wedge^{2}\fg} q_{\omega^\sharp}
&=\int_{\Sigma}\sum_{\mu=1}^{n-1}\Big(|B(e_{\mu},\omega^{\sharp})|^{2}+|B(e_{\mu},\nu)|^{2}|\omega|^{2}\Big)
-\Big(\sum_{\mu=1}^{n-1}\langle R^{M}(\omega^{\sharp},e_{\mu})e_{\mu}, \omega^{\sharp}\rangle+{\rm Ric}^{M}(\nu,\nu)|\omega|^{2}\Big)\\
&\qquad +\sum_{\mu=1}^{n-1}\Big(\langle \overline{R}(\omega^{\sharp},e_{\mu})e_{\mu}, \omega^{\sharp}\rangle+\langle \overline{R}(\nu, e_{\mu})e_{\mu},\nu\rangle|\omega|^{2}\Big)\,d\mu_{\Sigma},\nonumber
\end{align}
for any harmonic $1$-form $\omega$ on $\Sigma$, where $\{e_{\mu}\}_{\mu=1}^{n-1}$ is an orthonormal basis of $T_{p}\Sigma$.
\end{theorem}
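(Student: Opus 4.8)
The plan is to read off Theorem \ref{thm:tr} as the harmonic specialization of the pointwise trace identity already established in Proposition \ref{prop:tr}. Concretely, I would substitute $T = \omega^{\sharp}$ into \eqref{eq:trq0} and exploit that $\omega$ is harmonic. Because the Hodge Laplacian annihilates harmonic forms, the leading term $-(\Delta_1 T^{\flat})(T) = -(\Delta_1\omega)(\omega^{\sharp})$ vanishes identically; this deletes the one summand of \eqref{eq:trq0} that is not already a curvature or second-fundamental-form expression.

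Next I would integrate the resulting pointwise formula over $\Sigma$ by means of the relation \eqref{eq:trtr}, that is, ${\rm Tr}_{\wedge^{2}\fg}q_T = -\int_\Sigma {\rm Tr}_{\wedge^{2}\fg}q_{T,p}\,d\mu_\Sigma$. The overall sign it introduces flips every surviving term: the $-|B|^2$ contributions become $+|B|^2$, the $+\langle R^M\cdot\rangle$ and $+{\rm Ric}^M$ contributions pass to their negatives, and the $-\langle\overline{R}\cdot\rangle$ contributions become $+\langle\overline{R}\cdot\rangle$. Regrouping these three families of terms reproduces verbatim the integrand of \eqref{eq:tr1}. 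One bookkeeping point deserves attention: Proposition \ref{prop:tr} is phrased in the geodesic normal coordinate $\{\p_\mu\}$ centred at the fixed point $p$, where $\{\p_\mu\}$ is orthonormal at $p$; since each sum $\sum_\mu|B(\p_\mu,\omega^{\sharp})|^2$, $\sum_\mu\langle R^M(\omega^{\sharp},\p_\mu)\p_\mu,\omega^{\sharp}\rangle$, and the like, is an intrinsic trace over $T_p\Sigma$, it may be evaluated in the arbitrary orthonormal frame $\{e_\mu\}$ of the statement without change.

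I do not expect a genuine obstacle here, because all the analytic weight has already been carried in Proposition \ref{prop:tr}: Lemma \ref{lem:key1} on the vanishing of the fundamental vector fields $Y_i^{\dagger}$ and their covariant derivatives at $p$, the Killing-field Hessian identity \eqref{eq:kill}, the orthogonality of $\fg = \fk_p\oplus\fn_p$ from Lemma \ref{lem:key2}, and the Bochner--Weitzenb\"ock identity \eqref{eq:BW}. The only genuinely delicate point is the sign convention under \eqref{eq:trtr}, together with the observation that it is harmonicity of $\omega$---not merely its closedness---that is needed to kill the full Hodge Laplacian term and thereby collapse \eqref{eq:trq0} to the purely geometric form \eqref{eq:tr1}.
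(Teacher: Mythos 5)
Your proposal is correct and coincides with the paper's own derivation: the authors likewise obtain \eqref{eq:tr1} by setting $T=\omega^{\sharp}$ in \eqref{eq:trq0}, using $\Delta_{1}\omega=0$ to drop the Hodge Laplacian term, and integrating via \eqref{eq:trtr}, whose minus sign flips the remaining terms exactly as you describe. Your remarks on frame independence and on needing full harmonicity (not just closedness) are accurate but routine; there is nothing to add.
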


By Proposition \ref{prop:ind}-(i), we have the following result which generalizes \cite[Theorem A]{ACS}.
\begin{proposition}\label{prop:meth2}
Let $M^{n}$ be a Riemannian manifold and $\Sigma$ a two-sided closed minimal hypersurface $\Sigma$ in $M$. If there is an isometric embedding $F: M^{n}\to N^{n+k}$ into a compact semi-simple Riemannian symmetric space $N=G/K$ so that ${\rm Tr}_{\wedge^{2}\fg}q_{\omega^{\sharp}}<0$ for any non-trivial harmonic $1$-form $\omega$ on $\Sigma$, then we have 
\[
{\rm index}(\Sigma)\geq \frac{2}{d(d-1)}b_{1}(\Sigma),
\]
where $d:={\rm dim}G={\rm dim}{\rm Isom}_{0}(N)$.
\end{proposition}

We shall mention another important fact. By the Gauss equation, we have 
\begin{align*}
\sum_{\mu=1}^{n-1}\Big(-\langle R^{M}(\omega^{\sharp},e_{\mu})e_{\mu}, \omega^{\sharp}\rangle+\langle \overline{R}(\omega^{\sharp},e_{\mu})e_{\mu}, \omega^{\sharp}\rangle\Big)
&=\sum_{\mu=1}^{n-1}\Big(|B(\omega^{\sharp},e_{\mu})|^{2}-\langle B(\omega^{\sharp},\omega^{\sharp}), B(e_{\mu},e_{\mu})\rangle\Big),\\
-{\rm Ric}^{M}(\nu,\nu)+\sum_{\mu=1}^{n-1}\langle\overline{R}(\nu, e_{\mu})e_{\mu},\nu\rangle&=\sum_{\mu=1}^{n-1}\Big(|B(\nu,e_{\mu})|^{2}-\langle B(\nu,\nu), B(e_{\mu},e_{\mu})\rangle\Big).
\end{align*}
Thus, the trace formula \eqref{eq:tr1} is written as
\begin{align}
\label{eq:tr2}
{\rm Tr}_{\wedge^{2}\fg} q_{\omega^\sharp}&=\int_{\Sigma}\sum_{\mu=1}^{n-1}2\Big(|B(\omega^{\sharp},e_{\mu})|^{2}+|B(\nu,e_{\mu})|^{2}|\omega|^{2}\Big)
\\
&\qquad -\sum_{\mu=1}^{n-1}\Big(\langle B(\omega^{\sharp},\omega^{\sharp}), B(e_{\mu},e_{\mu})\rangle+\langle B(\nu,\nu), B(e_{\mu},e_{\mu})\rangle|\omega|^{2}\Big)\,d\mu_{\Sigma} \nonumber
\end{align}
In particular,  we obtain the following  consequence.

\begin{proposition}\label{prop:key2}
Let $M$ be a totally geodesic submanifold (possibly $M=N$) in a compact semi-simple Riemannian symmetric space $N$, and $f: \Sigma^{n-1}\to M^{n}$ be a minimal immersion with trivial normal bundle. Then we have ${\rm Tr}_{\wedge^{2}\fg}q_{\omega^{\sharp}}=0$ for any harmonic $1$-form $\omega$ on $\Sigma$.
\end{proposition}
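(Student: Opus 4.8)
The plan is to observe that the hypothesis that $M$ is totally geodesic in $N$ is exactly the statement that the second fundamental form $B$ of the isometric immersion $F\colon M\to N$ vanishes identically, together with the fact that the trace formula has already been rearranged, in \eqref{eq:tr2}, into a shape in which \emph{every} summand of the integrand carries a factor of $B$. Once this is noticed, the proposition is an immediate corollary of Theorem \ref{thm:tr}.

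Concretely, I would first recall the standard characterization that a submanifold $M\subset N$ is totally geodesic if and only if $B(X,Y)=(\onab_X Y)^{\perp}=0$ for all $X,Y\in\Gamma(TM)$; in the degenerate case $M=N$ with $F=\mathrm{id}_M$ (i.e.\ $k=0$) there is no normal space to $M$ in $N$ at all, so $B\equiv 0$ trivially and this case is covered as well. I would then invoke the form \eqref{eq:tr2} of the trace, which expresses ${\rm Tr}_{\wedge^{2}\fg}q_{\omega^{\sharp}}$ purely through the four $B$-quadratic quantities $|B(\omega^{\sharp},e_{\mu})|^{2}$, $|B(\nu,e_{\mu})|^{2}|\omega|^{2}$, $\langle B(\omega^{\sharp},\omega^{\sharp}),B(e_{\mu},e_{\mu})\rangle$ and $\langle B(\nu,\nu),B(e_{\mu},e_{\mu})\rangle|\omega|^{2}$. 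Substituting $B\equiv 0$ makes each of these vanish identically along $\Sigma$, so the integrand is pointwise zero and hence ${\rm Tr}_{\wedge^{2}\fg}q_{\omega^{\sharp}}=0$ for every harmonic $1$-form $\omega$.

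As a sanity check one can reach the same conclusion directly from \eqref{eq:tr1} without passing through \eqref{eq:tr2}: the explicit $B$-terms drop out, and the remaining curvature contributions cancel because the Gauss equation for the totally geodesic immersion $M\to N$ reduces to $\langle R^{M}(X,Y)Z,W\rangle=\langle \overline{R}(X,Y)Z,W\rangle$ on tangent vectors, giving $\sum_{\mu}\langle R^{M}(\omega^{\sharp},e_{\mu})e_{\mu},\omega^{\sharp}\rangle=\sum_{\mu}\langle \overline{R}(\omega^{\sharp},e_{\mu})e_{\mu},\omega^{\sharp}\rangle$ and ${\rm Ric}^{M}(\nu,\nu)=\sum_{\mu}\langle \overline{R}(\nu,e_{\mu})e_{\mu},\nu\rangle$. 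I do not expect any genuine obstacle here: the entire difficulty has been absorbed into Theorem \ref{thm:tr} and the Gauss-equation rewriting that produced \eqref{eq:tr2}, so this statement is a formal consequence. The only point requiring a word of care is the bookkeeping when $M=N$, where the notion of $B$ degenerates; there one simply notes that the derivation of Theorem \ref{thm:tr} is valid for all $k\geq 0$, so \eqref{eq:tr2} still applies with all $B$-terms absent.
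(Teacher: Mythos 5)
Your proof is correct and is essentially the paper's own argument: the proposition is stated there as an immediate consequence of the rewritten trace formula \eqref{eq:tr2}, in which every term of the integrand is quadratic in the second fundamental form $B$ of $M\to N$, so $B\equiv 0$ (which holds for totally geodesic $M$, trivially including $M=N$) forces the trace to vanish. Your alternative check via \eqref{eq:tr1} and the Gauss equation is a harmless reformulation of the same cancellation.
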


We remark that any complete totally geodesic submanifold in a symmetric space is also a symmetric space (but not necessarily semi-simple). 

Combining Proposition \ref{prop:key2} with Proposition \ref{prop:ind}-(ii),  we obtain the following theorem which has been proved in \cite[Theorem A]{MR} (However, our approach is different from \cite{MR}).

\begin{theorem}[Mendes-Radeschi \cite{MR}]
Let $\Sigma$ be a closed, two-sided minimal hypersurface in a compact semi-simple symmetric space $N=G/K$ of ${\rm dim}\ G=d$. Then we have
\[
{\rm index}(\Sigma)+{\rm nullity}(\Sigma)\geq \frac{2}{d(d-1)}b_{1}(\Sigma).
\]
\end{theorem}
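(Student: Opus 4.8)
The plan is to obtain this as a direct corollary of the work already done in Section \ref{sec:tr2}, by specializing the ambient isometric immersion to the identity. Concretely, I would take $M=N$ and let $F=\mathrm{id}\colon N\to N$ play the role of the isometric immersion $M\to N$. Then $M=N$ is (trivially) a totally geodesic submanifold of $N$, and the hypothesis that $\Sigma$ is two-sided guarantees that the minimal immersion $f\colon\Sigma^{n-1}\to N$ has trivial normal bundle, so there is a globally defined unit normal field $\nu$. Hence all the standing assumptions of Section \ref{sec:tr2} are in force, and we may form the bilinear map $\Phi$ and the associated quadratic form $q_T$ on $\mathcal{G}:=\wedge^{2}\fg$ exactly as in \eqref{def:phi}.

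Next I would apply Proposition \ref{prop:key2} with this choice: since $M=N$ is totally geodesic in $N$ and $f$ has trivial normal bundle, the proposition yields
\[
{\rm Tr}_{\wedge^{2}\fg}q_{\omega^{\sharp}}=0
\]
for every harmonic $1$-form $\omega$ on $\Sigma$. In particular the hypothesis ${\rm Tr}_{\mathcal{G}}q_{\omega^{\sharp}}\leq 0$ of Proposition \ref{prop:ind}-(ii) is satisfied (indeed with equality). Invoking the first conclusion of Proposition \ref{prop:ind}-(ii) with $\mathcal{G}=\wedge^{2}\fg$ then gives immediately
\[
{\rm index}(\Sigma)+{\rm nullity}(\Sigma)\geq \frac{1}{{\rm dim}\,\wedge^{2}\fg}\,b_{1}(\Sigma).
\]

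To finish I would perform the dimension count: since ${\rm dim}\,\fg=d$, we have ${\rm dim}\,\wedge^{2}\fg=\binom{d}{2}=\tfrac{1}{2}d(d-1)$, so $1/{\rm dim}\,\wedge^{2}\fg=\tfrac{2}{d(d-1)}$, which is precisely the constant in the statement. In this formulation there is essentially no remaining obstacle: all of the analytic and geometric content — the Bochner--Weitzenb\"ock computation, the Killing-field identities of Lemma \ref{lem:key1}, and the trace evaluation — has already been absorbed into Proposition \ref{prop:key2} (via Theorem \ref{thm:tr}) and into the eigenvalue-counting argument of Lemma \ref{lem:key}/Proposition \ref{prop:ind}. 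The only points requiring attention are the bookkeeping ones, namely confirming that $M=N$ meets the totally geodesic hypothesis, that two-sidedness supplies the trivial normal bundle used throughout Section \ref{sec:tr2}, and the elementary computation of ${\rm dim}\,\wedge^{2}\fg$.
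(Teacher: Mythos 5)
Your proposal is correct and follows essentially the same route as the paper: the paper obtains this theorem precisely by taking $M=N$ with $F=\mathrm{id}$, invoking Proposition \ref{prop:key2} to get ${\rm Tr}_{\wedge^{2}\fg}q_{\omega^{\sharp}}=0$ for all harmonic $1$-forms, and then applying the first conclusion of Proposition \ref{prop:ind}-(ii) with ${\rm dim}\wedge^{2}\fg=\tfrac{1}{2}d(d-1)$. Your bookkeeping of the hypotheses (totally geodesic, trivial normal bundle, dimension count) matches the paper's argument exactly.
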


\begin{remark}[Comparison with the trace formula in $\R^{m}$]
{\rm 
Under the same assumption given in Theorem \ref{thm:tr}, we further assume that the semi-simple compact RSS $N$ is isometrically immersed into the Euclidean space $\R^{m}$ via the map $G: N\to \R^{m}$.  Namely, we suppose that we have the following isometric immersions:
\[
\Sigma^{n-1}\xrightarrow{f} M^{n}\xrightarrow{F} N^{n+k}\xrightarrow{G} \R^{m}.
\]
Note that, for a symmetric space $N$, there are some well-studied isometric immersions from $N$ into $\R^{m}$ (e.g. see \cite{ACS, GMR}). 
We denote the second fundamental form of three isometric immersions $F: M\to N$,  $G\circ F: M\to \R^{m}$ and $G: N\to \R^{m}$ by $B$, $B'$ and $B_{N}$, respectively.

Since $M^{n}$ is isometrically immersed into $\R^{m}$ by the map $G\circ F$, we can apply the tracing method  used in \cite{ACS} to the map $G\circ F$, and then, we have the following trace formula (see \eqref{eq:acs} in Appendix):
\begin{align}\label{eq:tr22}
{\rm Tr}_{\wedge^{2}\R^{m}} \widehat{q}_{\omega^{\sharp}}&=\int_{\Sigma}\sum_{\mu=1}^{n-1}2\Big(|B'(\omega^{\sharp},e_{\mu})|^{2}+|B'(\nu,e_{\mu})|^{2}|\omega|^{2}\Big)
\\
&\qquad -\sum_{\mu=1}^{n-1}\Big(\langle B'(\omega^{\sharp},\omega^{\sharp}), B'(e_{\mu},e_{\mu})\rangle+\langle B'(\nu,\nu), B'(e_{\mu},e_{\mu})\rangle|\omega|^{2}\Big)\,d\mu_{\Sigma},\nonumber
\end{align}
where $\widehat{q}_{\omega^{\sharp}}$ is a quadratic form on $\wedge^{2}\R^{m}$ considered in \cite{ACS} (see Appendix for details).
Notice that the right hand side  is nothing but the replacing $B$ with $B'$ in  \eqref{eq:tr2}.
Since $B'(X,Y)=B(X,Y)+B_{N}(X,Y)$ for any $X,Y\in T_{p}M$, and $B(X,Y)$ is orthogonal to $B_{N}(X,Y)$, the formulas \eqref{eq:tr2} and  \eqref{eq:tr22} show the following relation:
\begin{align}\label{eq:com}
{\rm Tr}_{\wedge^{2}\R^{m}} \widehat{q}_{\omega^{\sharp}}&={\rm Tr}_{\wedge^{2}\fg}q_{\omega^{\sharp}}+\int_{\Sigma} \beta_{N}\Big(\frac{\omega^{\sharp}}{|\omega|},\nu\Big)\cdot |\omega|^{2}\,d\mu_{\Sigma},
\end{align}
where we put
\begin{align*}
\beta_{N}(X,\nu)&:=\sum_{\mu=1}^{n-1}2\Big(|B_{N}(X,e_{\mu})|^{2}+|B_{N}(\nu,e_{\mu})|^{2}\Big)
\\
&\quad -\sum_{\mu=1}^{n-1}\Big(\langle B_{N}(X,X), B_{N}(e_{\mu},e_{\mu})\rangle+\langle B_{N}(\nu,\nu), B_{N}(e_{\mu},e_{\mu})\rangle \Big)\nonumber
\end{align*}
for a unit tangent vector $X\in T_{p}\Sigma$, and $\{e_{\mu}\}_{\mu=1}^{n-1}$ is an orthonormal basis of $T_{p}\Sigma$.

We remark that, by the relation \eqref{eq:com}, if $\beta_{N}(X,\nu)<0$ for any unit vector $X\in T_{p}\Sigma$ and $p\in \Sigma$, then it holds that ${\rm Tr}_{\wedge^{2}\R^{m}} \widehat{q}_{\omega^{\sharp}}<{\rm Tr}_{\wedge^{2}\fg}q_{\omega^{\sharp}}$ for any non-trivial harmonic $1$-form $\omega$. Therefore, in this case, it is more reasonable to use ${\rm Tr}_{\wedge^{2}\R^{m}} \widehat{q}_{\omega^{\sharp}}$ rather than ${\rm Tr}_{\wedge^{2}\fg}q_{\omega^{\sharp}}$ to show that the trace is negative (or non-positive) in order to apply Proposition \ref{prop:ind}. For example, if 
$G: S^{n+k}(r)\to \R^{n+k+1}$ is the canonical isometric embedding of the sphere of radius $r$,  then it is easy to see that $\beta_{N}(X,\nu)=-\frac{2}{r^{2}}(n-2)$ and  $\beta_{N}(X,\nu)<0$ if $n\geq 3$.  This means that if $M^{n}$ is isometrically immersed into $N=S^{n+k}$, it seems better to consider $G\circ F: M\to \R^{n+k+1}$ and apply the method used in \cite{ACS} instead of using our trace formula \eqref{eq:tr1} for $F: M^{n}\to S^{n+k}$.

However, we may not expect such a situation for a general symmetric space $N$. Namely, there is a possibility that the opposite inequality holds; ${\rm Tr}_{\wedge^{2}\R^{m}} \widehat{q}_{\omega^{\sharp}}>{\rm Tr}_{\wedge^{2}\fg}q_{\omega^{\sharp}}$. For example, by Proposition \ref{prop:key2}, when $M=N$ and $F$ is the identity map, we always have ${\rm Tr}_{\wedge^{2}\fg}q_{\omega^{\sharp}}=0$ even if $ {\rm Tr}_{\wedge^{2}\R^{m}} \widehat{q}_{\omega^{\sharp}}>0$.}
\end{remark}

\subsection{Applications}

As a generalization of Theorem \ref{thm:a1} concerning on the Berger sphere in $\C P^{2}$,  by using our trace formula given in Theorem \ref{thm:tr},
we shall prove an index estimate for minimal hypersurface in some geodesic hypersphere of the $\mathbb{K}$-projective space, where $\K=\C, \mathbb{H}$ or $\mathbb{O}$. More precisely, we show the following result.
\begin{theorem}\label{thm:a2}
Let $\mathbb{K}P^{m}$ be the $\mathbb{K}$-projective space, where $\mathbb{K}=\C, \mathbb{H}$ (for $m\geq 2$) or $\mathbb{O}$ (with $m=2$), and $S_{r}^{n}$ be the geodesic hypersphere of radius $r\in (0,\pi/2)$ in $\mathbb{K}P^{m}$. Then there exists some positive constant $c_{n}$ depending only on $n$ such that if 
$
0<\tan r<c_{n}
$
then we have 
\[
{\rm index}(\Sigma)\geq \frac{2}{d(d-1)}b_{1}(\Sigma)
\]
for any closed minimal hypersurface $\Sigma$ embedded in $S_{r}^{n}$, where $d={\rm dim}{\rm Isom}_{0}\mathbb{K}P^{m}$.
\end{theorem}

An explicit value of $c_{n}$ is given in the following proof.

\begin{proof}
 It is known that in any case, the geodesic hypersphere $S_{r}^{n}$ has exactly two constant principal curvatures, and we may assume that they are given by $\lambda_{1}=\cot r$ and $\lambda_{2}=2\cot(2r)$ (see \cite{Berndt, BV, Mur}). Moreover the multiplicity $m(\lambda_{i})$ of each principal curvature is given in the table \ref{table}.

\begin{table}[h]
\centering
\begin{tabular}{|c|c|c|c|} \hline
$\K$ & $n$ & $m(\lambda_{1})$ & $m(\lambda_{2})$ \\ \hline
 $\C$ & $2m-1$ &  $2m-2$ & 1 \\ 
 $\mathbb{H}$ & $4m-1$ & $4m-4$ & 3 \\ 
 $\mathbb{O}$ & 15 &  $8$ & $7$ \\\hline
 \end{tabular}
  \caption{Multiplicities of principal curvatures of $S_{r}^{n}$}
  \label{table}
 \end{table}

We take an orthonormal basis $\{U_{1},\ldots, U_{\delta}\}$ of the $\lambda_{2}$-eigenspace, where we put $\delta:=m(\lambda_{2})$. Then it is easy to see that the shape operator $A_{S_{r}}$ of $S_{r}$ in $\mathbb{K}P^{m}$ is given by
\[
A_{S_{r}}(X)=\lambda_{1}X+(\lambda_{2}-\lambda_{1})\sum_{i=1}^{\delta}\eta_{i}(X) U_{i},
\]
where $X$ is a tangent vector of $S_{r}$ and we set $\eta_{i}(X):=\langle U_{i},X\rangle$.
We thus put $a:=\lambda_{1}=\cot r$ and $b:=\lambda_{2}-\lambda_{1}=-\tan r$. Then the second fundamental form of $S_{r}$ is written by
\begin{align}\label{eq:ber}
B(X,Y)=\Big\{a \langle X, Y\rangle+b\sum_{i=1}^{\delta}\eta_{i}(X)\eta_{i}(Y)\Big\}\cdot \nu_{S},
\end{align}
where $\nu_{S}$ is the unit normal vector field of $S_{r}$ in $\mathbb{K}P^{m}$.

Let $\Sigma^{n-1}$ be a closed minimal hypersurface embedded in $S_{r}^{n}$.  Note that $S_{r}^{n}$ is homeomorphic to the $n$-dimensional sphere, and thus, any embedded hypersurface $\Sigma$ in $S_{r}^{n}$ is two-sided. Thus, by taking \eqref{eq:tr2} into account, we consider the following quantity:
\begin{align*}
\gamma(T)&:=\sum_{\mu=1}^{n-1}2\Big(|B(T,e_{\mu})|^{2}+|B(\nu,e_{\mu})|^{2}\Big)\\
&\quad -\sum_{\mu=1}^{n-1} \Big(\langle B(T,T), B(e_{\mu},e_{\mu})\rangle+\langle B(\nu,\nu), B(e_{\mu},e_{\mu})\rangle\Big),
\end{align*}
where $T$ is a unit tangent vector in $T_{p}\Sigma$, $\nu$ is the unit normal vector field of $\Sigma$ in $S_{r}$ and $\{e_{\mu}\}_{\mu=1}^{n-1}$ is an orthonormal basis of $T_{p}\Sigma$.  If $\gamma(T)<0$ for any unit tangent vector $T\in T_{p}\Sigma$ and any $p\in \Sigma$, then we have that ${\rm Tr}_{\wedge^{2}\fg}q_{\omega^{\sharp}}<0$ for any non-trivial harmonic $1$-form $\omega$, and hence,  Proposition \ref{prop:meth2} implies the desired conclusion.

By using \eqref{eq:ber}, we compute 
\begin{align*}
\sum_{\mu=1}^{n-1}|B(T,e_{\mu})|^{2}&=
\sum_{\mu=1}^{n-1}\Big(a\langle T,e_{\mu}\rangle+b\sum_{i=1}^{\delta}\eta_{i}(T)\eta_{i}(e_{\mu})\Big)^{2}\\
&=\sum_{\mu=1}^{n-1}\Big(a^{2}\langle T,e_{\mu}\rangle^{2}+2ab\sum_{i=1}^{\delta}\langle T,e_{\mu}\rangle \eta_{i}(T)\eta_{i}(e_{\mu})+b^{2}\sum_{i=1}^{\delta}\eta_{i}(T)^{2}\eta_{i}(e_{\mu})^{2}\\
&\qquad +b^{2}\sum_{i\neq j}\eta_{i}(T)\eta_{i}(e_{\mu})\eta_{j}(T)\eta_{j}(e_{\mu})\Big{)} \\
&=a^{2}-2\sum_{i=1}^{\delta}\eta_{i}(T)^{2}+b^{2}\sum_{i=1}^{\delta}\eta_{i}(T)^{2}(1-\eta_{i}(\nu)^{2})
-b^{2}\sum_{i\neq j} \eta_{i}(T)\eta_{j}(T)\eta_{i}(\nu)\eta_{j}(\nu)\\
&=a^{2}+(b^{2}-2)\sum_{i=1}^{\delta}\eta_{i}(T)^{2}-b^{2}\Big(\sum_{i=1}^{\delta}\eta_{i}(T)\eta_{i}(\nu)\Big)^{2}.\\
\sum_{\mu=1}^{n-1}|B(\nu,e_{\mu})|^{2}&=\sum_{\mu=1}^{n-1}\Big(b\sum_{i=1}^{\delta}\eta_{i}(\nu)\eta_{i}(e_{\mu})\Big)^{2}\\
&=\sum_{\mu=1}^{n-1}b^{2}\Big(\sum_{i=1}^{\delta}\eta_{i}(\nu)^{2}\eta_{i}(e_{\mu})^{2}+ \sum_{i\neq j}\eta_{i}(\nu)\eta_{j}(\nu)\eta_{i}(e_{\mu})\eta_{j}(e_{\mu})\Big)\\
&=\sum_{i=1}^{\delta}b^{2}\eta_{i}(\nu)^{2}(1-\eta_{i}(\nu)^{2})-\sum_{i\neq j} b^{2}\eta_{i}(\nu)^{2}\eta_{j}(\nu)^{2}\\
&=b^{2}\sum_{i=1}^{\delta}\eta_{i}(\nu)^{2}-b^{2}\Big(\sum_{i=1}^{\delta}\eta_{i}(\nu)^{2}\Big)^{2}.
\end{align*}
where we used the relations $\sum_{\mu}\eta_{i}(e_{\mu})^{2}=1-\eta_{i}(\nu)^{2}$ and $0=\langle U_{i}, U_{j}\rangle=\sum_{\mu}\eta_{i}(e_{\mu})\eta_{j}(e_{\mu})+\eta_{i}(\nu)\eta_{j}(\nu)$ if $i\neq j$. 
Similarly, we have
\begin{align*}
\sum_{\mu=1}^{n-1}\langle B(T,T), B(e_{\mu},e_{\mu})\rangle
&=\sum_{\mu=1}^{n-1}\Big\{a|T|^{2}+b\sum_{i=1}^{\delta}\eta_{i}(T)^{2}\Big\}\Big\{a|e_{\mu}|^{2}+b\sum_{i=1}^{\delta}\eta_{i}(e_{\mu})^{2}\Big\}\\
&=\Big\{a+b\sum_{i=1}^{\delta}\eta_{i}(T)^{2}\Big\}\Big\{(n-1)a+\delta b-b\sum_{i=1}^{\delta}\eta_{i}(\nu)^{2}\Big\}\\
&=(n-1)a^{2}-\delta+\sum_{i=1}^{\delta}\eta_{i}(\nu)^{2}+\{-(n-1)+\delta b^{2}\}\sum_{i=1}^{\delta}\eta_{i}(T)^{2}
-b^{2}\Big(\sum_{i=1}^{\delta}\eta_{i}(T)^{2}\Big)\Big(\sum_{i=1}^{\delta}\eta_{i}(\nu)^{2}\Big).\\
\sum_{\mu=1}^{n-1}\langle B(\nu,\nu), B(e_{\mu},e_{\mu})\rangle
&=(n-1)a^{2}-\delta+\sum_{i=1}^{\delta}\eta_{i}(\nu)^{2}+\{-(n-1)+\delta b^{2}\}\sum_{i=1}^{\delta}\eta_{i}(\nu)^{2} -b^{2}\Big(\sum_{i=1}^{\delta}\eta_{i}(\nu)^{2}\Big)^{2}.
\end{align*}

We put
\[
P:=\sum_{i=1}^{\delta}\eta_{i}(T)^{2},\quad Q:=\sum_{i=1}^{\delta}\eta_{i}(\nu)^{2}.
\]
Note that for any unit tangent vector $X$,  we have
\begin{align*}
\sum_{i=1}^{\delta}\eta_{i}(X)^{2}=\sum_{i=1}^{\delta}\langle U_{i},X\rangle^{2}\leq \|X\|^{2}=1
\end{align*}
since $\{U_{1},\ldots, U_{\delta}\}$ is orthonormal. By the above computations, we see
\begin{align}\label{eq:gamm}
\gamma(T)&=2\Big{\{}a^{2}+(b^{2}-2)P-b^{2}\Big(\sum_{i=1}^{\delta}\eta_{i}(T)\eta_{i}(\nu)\Big)^{2}
+b^{2}Q-b^{2}Q^{2}\Big{\}}\\
&\quad -\Big{\{}(n-1)a^{2}-\delta+Q+\{-(n-1)+\delta b^{2}\}P-b^{2}PQ \nonumber\\
&\qquad +(n-1)a^{2}-\delta+Q+\{-(n-1)+\delta b^{2}\}Q-b^{2}Q^{2}\Big{\}}\nonumber\\
&=(-2n+4)a^{2}+\{(2-\delta) b^{2}+n-5\}P-2b^{2}\Big(\sum_{i=1}^{\delta}\eta_{i}(T)\eta_{i}(\nu)\Big)^{2}+b^{2}PQ\nonumber\\
&\quad +\{(2-\delta)b^{2}+n-3\}Q-b^{2}Q^{2}+2\delta.\nonumber
\end{align}


First, we consider the case when $\mathbb{K}=\mathbb{C}$. Since  the geodesic sphere $S_{r}^{n}\subset \mathbb{C}P^{m}$ is odd-dimensional, we may assume $n\geq 3$. Using the facts that $\delta=1$,  $0\leq P\leq 1$ and $0\leq  Q\leq 1$, we see
\begin{align*}
&-2b^{2}\Big(\sum_{i=1}^{\delta}\eta_{i}(T)\eta_{i}(\nu)\Big)^{2}+b^{2}PQ
+\{(2-\delta)b^{2}+n-3\}Q-b^{2}Q^{2}\\
&=-b^{2}PQ+(b^{2}+n-3)Q-b^{2}Q^{2}\\
&\leq (b^{2}+n-3)Q-b^{2}Q^{2}\leq 
\begin{cases}
n-3 & \textup{if $b^{2}+n-3\geq  2b^{2}$}\\
(b^{2}+n-3)^{2}/4b^{2} & \textup{if $b^{2}+n-3< 2b^{2}$}\\
\end{cases}.
\end{align*}
Suppose $n\geq 5$.
Then, if $b^{2}+n-3< 2b^{2}$ (or equivalently, $\tan^{2}r> n-3$),  it turns out  that by using \eqref{eq:gamm}$,\gamma(T)$ cannot be bounded from above by any negative constant.  Thus, we  may assume that $b^{2}+n-3\geq 2b^{2}$ or equivalently, $\tan^{2}r\leq n-3$. Then, by \eqref{eq:gamm}, we see
\begin{align*}
\gamma(T)&\leq (-2n+4)a^{2}+(b^{2}+n-5)+(n-3)+2\\
&=(-2n+4)a^{2}+b^{2}+2n-6.
\end{align*}
Thus, if furthermore
\[
(-2n+4)\cot^{2}r+\tan^{2}r+2n-6<0,
\]
then we obtain $\gamma(T)<0$. It is easy to see that this inequality satisfies when
\[
\tan^{2}r<\sqrt{n^{2}-4n+5}-(n-3).
\]
(Note that $\sqrt{n^{2}-4n+5}-(n-3)<n-3$ if $n\geq 5$). For example, when $n=5$, the last inequality is equivalent to that  $\tan ^{2}r<\sqrt{10}-2=1.162\cdots.$ Moreover, we remark that  $\sqrt{n^{2}-4n+5}-(n-3)>1$ and tends to $1$ if $n\to \infty$.

One can obtain a similar result even when $n=3$ by estimating $\gamma(T)$. However, it turns out that the result is not better than the ones given in Theorem \ref{thm:a1}. Thus, we omit the details for the case when $n=3$.

Next, we consider the case when $\mathbb{K}=\mathbb{H}$ or $\mathbb{O}$. Note that we may assume $n\geq 7$.  In this case, by \eqref{eq:gamm}, we have
\[
\gamma(T)\leq (-2n+4)a^{2}+(n-5)P+\{(3-\delta)b^{2}+n-3\}Q-b^{2}Q^{2}+2\delta,
\]
where we used the fact that $b^{2}PQ\leq b^{2}Q$ since $0\leq P\leq 1$ and $0\leq Q\leq 1$.

If $\delta=3$, we have
\[
\{(3-\delta)b^2+n-3\}Q-b^{2}Q^{2}=(n-3)Q-b^{2}Q^{2}\leq 
\begin{cases}
n-3-b^{2} & \textup{if $n-3\geq  2b^{2}$}\\
(n-3)^{2}/4b^{2} & \textup{if $n-3< 2b^{2}$}
\end{cases}
\]
since $0\leq Q\leq 1$. When $n-3< 2b^{2}$, it turns out that $\gamma(T)$ cannot be bounded from above by any negative constant, and hence, we may assume that $n-3\geq 2b^{2}$ or equivalently, $\tan^{2}r\leq (n-3)/2$. Then we see
\begin{align*}
\gamma(T)&\leq (-2n+4)a^{2}+(n-5)+(n-3-b^{2})+2\cdot 3\\
&=(-2n+4)a^{2}-b^{2}+2n-2.
\end{align*}
Using this,  we easily see that if furthermore
\[
\tan^{2}r<n-1-\sqrt{n^{2}-4n-5}
\]
then $\gamma(T)<0$. For example, if $n=7$, then $\tan^{2}r<6-\sqrt{16}=2$. Moreover, we remark that $\lim_{n\to \infty} (n-1-\sqrt{n^{2}-4n-5})=1$.

In the case when $\mathbb{K}=\mathbb{O}$, we have $\delta=7$ and $n=15$. Then,
\begin{align*}
\{(3-\delta)b^2+n-3\}Q-b^{2}Q^{2}
&=(-4b^{2}+12)Q-b^{2}Q^{2}.
\end{align*}
We may assume that $(-4b^{2}+12)/2b^{2}\geq1$ or equivalently $b^{2}\leq 2$ so that $(-4b^{2}+12)Q-b^{2}Q^{2}\leq (-4b^{2}+12)-b^{2}=-5b^{2}+12$. Otherwise, it turns out that $\gamma(T)$ cannot be bounded from above by any negative constant. When $b^{2}=\tan^{2}r\leq 2$, then 
\begin{align*}
\gamma(T)&\leq (-2\cdot 15+4)a^{2}+(15-5)+(-5b^{2}+12)+2\cdot 7\\
&=-26a^{2}-5b^{2}+36.
\end{align*}
Thus it follows that if 
\[
\tan^{2}r<\frac{1}{5}(18-\sqrt{194})=0.8143\cdots
\]
then $\gamma(T)<0$.

Summarizing the argument, we have proved that if 
\[
\tan^{2}r<
\begin{cases}
\sqrt{n^{2}-4n+5}-(n-3) & \textup{if $\mathbb{K}=\mathbb{C}$ and $n\geq 5$}\\
n-1-\sqrt{n^{2}-4n-5} & \textup{if $\mathbb{K}=\mathbb{H}$}\\
\frac{1}{5}(18-\sqrt{194})& \textup{if $\mathbb{K}=\mathbb{O}$}
\end{cases},
\]
then we have $\gamma(T)<0$ for any unit tangent vector $T\in T_{p}\Sigma$ and we obtain the required conclusion.
\end{proof}

\section{Proof of Theorem \ref{thm:main0}}\label{sec:pf}

The aim of this section is to show the following result (Theorem \ref{thm:main0}).

\begin{theorem}\label{thm:main}
Let $\Sigma$ be a closed minimal hypersurface (not necessarily two-sided)  in a compact semi-simple Riemannian symmetric space $M=G/K$. If $\Sigma$ is unstable (i.e. ${\rm ind}(\Sigma)\geq 1$), then we have
\begin{align}\label{eq:main}
{\rm index}(\Sigma)\geq \frac{2}{d(d-1)+2(2n-3)}b_{1}(\Sigma),
\end{align}
where  $d={\rm dim} G$ and $n={\rm dim}M$. 
\end{theorem}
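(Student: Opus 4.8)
The plan is to apply Proposition~\ref{prop:ind}-(ii) to the identity immersion $F={\rm id}\colon M\to N=M$ (which is totally geodesic, so $B\equiv0$) with test space $\mathcal{G}=\wedge^{2}\fg$, for which $\dim\mathcal{G}=\binom{d}{2}=\tfrac{d(d-1)}{2}$. First I would note that the construction of Section~\ref{sec:tr2} survives the loss of two-sidedness: although $\varphi_{T,v}=\langle\nu\wedge T,\Pi(v)\rangle$ changes sign with $\nu$, the normal field $\Phi(T,v)=\varphi_{T,v}\,\nu$ is invariant under $\nu\mapsto-\nu$ and hence is a genuine global section of $\nu\Sigma$ in the one-sided case; moreover the integrand computing ${\rm Tr}_{\wedge^{2}\fg}q_{T,p}$ is even in $\nu$, so the conclusion ${\rm Tr}_{\wedge^{2}\fg}q_{\omega^{\sharp}}=0$ of Proposition~\ref{prop:key2} persists once $d\mu_{\Sigma}$ is read as the Riemannian density. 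Feeding ${\rm Tr}_{\wedge^{2}\fg}q_{\omega^{\sharp}}=0\le0$ into Proposition~\ref{prop:ind}-(ii) yields the affine bound
\[
{\rm index}(\Sigma)\ \ge\ \frac{2}{d(d-1)}\bigl(b_{1}(\Sigma)-\dim\mathcal{H}_{0}\bigr),
\]
where $\mathcal{H}_{0}=\{\omega\in\mathcal{H}\mid \mathcal{J}(\Phi_{\omega^{\sharp},v})\equiv0\ \forall v\in\wedge^{2}\fg\}$.

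The crux, and the step I expect to be hardest, is the curvature-free dimension estimate $\dim\mathcal{H}_{0}\le 2n-3$. For $\omega\in\mathcal{H}_{0}$ every field $\Phi_{\omega^{\sharp},v}$ is a Jacobi field. Because the $\fn_{p}$-parts of the fundamental fields $\{\theta^{\dagger}(p)\}$ span $T_{p}M$ (Lemmas~\ref{lem:key1} and~\ref{lem:key2}), letting $v$ range over $\wedge^{2}\fg$ recovers pointwise the bivector $\nu\wedge\omega^{\sharp}$ together with its covariant derivatives, so $\omega\in\mathcal{H}_{0}$ forces a rigid, overdetermined first-order system relating $\omega^{\sharp}$, its derivative $\nabla\omega$, and the second fundamental form $A$ of $\Sigma$. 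I would extract from this system a linear embedding of $\mathcal{H}_{0}$ into a fixed space of dimension at most $2n-3=2\dim\Sigma-1$, the admissible pointwise jets of $\omega$ being cut down by the harmonicity relations $d\omega=\delta\omega=0$. This is exactly the place where \cite{ACS2,MR} assumed a point with distinct principal curvatures in order to force $\mathcal{H}_{0}=0$; the aim here is to replace that nondegeneracy hypothesis by the unconditional bound $\dim\mathcal{H}_{0}\le 2n-3$. Establishing this dimension count uniformly in $\Sigma$, and without any assumption on $A$, is the main technical difficulty of the proof.

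Finally, I would convert the affine bound into the asserted linear bound \eqref{eq:main} by an elementary two-case argument in which instability enters. Writing $C=\tfrac{2}{d(d-1)}$ and $C'=\tfrac{2}{d(d-1)+2(2n-3)}$, one checks $C'=\tfrac{C}{C(2n-3)+1}$, so $(C-C')/C'=C(2n-3)$ and hence $1/C'=C(2n-3)/(C-C')$. The estimate $\dim\mathcal{H}_{0}\le 2n-3$ upgrades the affine bound to ${\rm index}(\Sigma)\ge C\bigl(b_{1}(\Sigma)-(2n-3)\bigr)$. If $C'b_{1}(\Sigma)\le 1$, then unstability gives ${\rm index}(\Sigma)\ge 1\ge C'b_{1}(\Sigma)$; if instead $b_{1}(\Sigma)>1/C'$, then $(C-C')b_{1}(\Sigma)>C(2n-3)$, whence $C\bigl(b_{1}(\Sigma)-(2n-3)\bigr)-C'b_{1}(\Sigma)=(C-C')b_{1}(\Sigma)-C(2n-3)>0$. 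In either case ${\rm index}(\Sigma)\ge C'b_{1}(\Sigma)$, which is precisely \eqref{eq:main}.
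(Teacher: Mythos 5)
Your overall architecture matches the paper's: vanishing of ${\rm Tr}_{\wedge^{2}\fg}q_{\omega^{\sharp}}$ for the identity (totally geodesic) immersion, Proposition \ref{prop:ind}-(ii) to get the affine bound with constant $C=2/(d(d-1))$, a bound $\dim\mathcal{H}_{0}\le 2n-3$, and the elementary instability argument to pass from the affine to the linear bound (your two-case arithmetic is correct and equivalent to the paper's inequality $a\ge (a+c)/(1+c)$ for $a\ge 1$). Your observation that $\varphi_{T,v}\cdot\nu$ is invariant under $\nu\mapsto-\nu$ and hence defines a global section in the one-sided case is a legitimate shortcut; the paper instead passes to the orientation double cover $\widetilde\Sigma$, identifies normal fields on $\Sigma$ with odd functions on $\widetilde\Sigma$, and uses $Q_{\widetilde\Sigma}(\widetilde V,\widetilde V)=2Q(V,V)$, which amounts to the same thing but also tracks how $\mathcal{H}_{0}$ injects into the corresponding space upstairs so that the $2n-3$ bound applies.

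However, there is a genuine gap: the estimate $\dim\mathcal{H}_{0}\le 2n-3$, which you yourself flag as ``the main technical difficulty,'' is precisely the new content of the proof and you do not supply it. Two separate arguments are needed. First, one must compute $\mathcal{J}(\varphi_{\omega^{\sharp},X\wedge Y})$ at a point for $X,Y\in\fn_{p}$; after cancellations that use the Einstein condition $\overline{\rm Ric}=cg$, the Killing-field identities $\onab Y^{\dagger}(p)=0$ and $\onab^{2}Y^{\dagger}=-\overline R(Y^{\dagger},\cdot)\cdot$, the Codazzi equation, and harmonicity of $\omega$, everything collapses to the antisymmetrization $-2\langle A(\nabla_{(Y^{\dagger})^{\top}}\omega^{\sharp},(X^{\dagger})^{\top}),\nu\rangle+2\langle A(\nabla_{(X^{\dagger})^{\top}}\omega^{\sharp},(Y^{\dagger})^{\top}),\nu\rangle$ (Proposition \ref{prop:jac}); hence $\mathcal{H}_{0}$ sits inside $\mathcal{H}_{1}=\{\omega: [S_{\nu},\nabla\omega^{\sharp}]=0\}$ (Lemma \ref{lem:aeq}). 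Your assertion that ranging $v$ over $\wedge^{2}\fg$ ``recovers the bivector $\nu\wedge\omega^{\sharp}$ together with its covariant derivatives'' does not by itself produce this identity. Second, the bound $\dim\mathcal{H}_{1}\le 2n-3$ is not a pointwise jet count cut down by $d\omega=\delta\omega=0$: the paper proves it by (a) a local \emph{smooth} simultaneous diagonalization of $S_{\nu}$ and $\nabla\omega^{\sharp}$ on some open set (Proposition \ref{prop:key5}, which is exactly what lets one drop the distinct-principal-curvature hypothesis of \cite{ACS2,MR}), (b) showing that the $2n-3$ functions $\omega(E_{i})$ and $\nabla\omega^{\sharp}(E_{i},E_{i})$ satisfy a linear ODE system of normal form along radial geodesics, so that $\omega$ on a geodesic ball is determined by $2n-3$ initial values, and (c) the unique continuation theorem for harmonic forms \cite{AKS} to globalize. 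Without these steps the proof is incomplete at its central point.
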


The following proof is a refinement and an extention of the argument given by Ambrozio-Carlotto-Sharp \cite{ACS2} and Mendes-Radeschi  \cite{MR}. 
We first prove Theorem \ref{thm:main} in the case when $\Sigma$ is two-sided, and then, we shall deal with the one-sided case. 
Throughout this section,  we suppose that $M$ is a semi-simple compact Riemannian symmetric space, and by assuming $M=N$, we use the same notation given in the previous sections (e.g. $\nabla^{M}=\onab$, $R^{M}=\overline{R}$, etc).
We begin with the following formula. 

\begin{proposition}\label{prop:jac}
Let $M=N$ be a compact semi-simple Riemannian symmetric space and $\Sigma$ be a two-sided minimal hypersurface  in $M$. Fix arbitrary point $p\in \Sigma$. Then for any harmonic $1$-form $\omega$ on $\Sigma$ and any $X,Y\in \fn_{p}$, we have
\begin{align*}
\mathcal{J}(\varphi_{\omega^{\sharp},X\wedge Y})(p)=-2\langle A(\nabla_{(Y^{\dagger})^{\top}}\omega^{\sharp},(X^{\dagger})^{\top}),\nu\rangle
+2\langle A(\nabla_{(X^{\dagger})^{\top}}\omega^{\sharp},(Y^{\dagger})^{\top}),\nu\rangle,
\end{align*}
where $\top$ means the orthogonal projection to $T_{p}\Sigma$.
\end{proposition}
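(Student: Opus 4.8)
The plan is to expand the test function explicitly and reduce the computation of $\mathcal{J}$ to a single pointwise identity, exploiting the classical fact that the normal part of an ambient Killing field along a minimal hypersurface is a Jacobi field. For $X\in\fg$ put $f_{X}:=\langle X^{\dagger},\nu\rangle\in C^{\infty}(\Sigma)$ and $\xi_{X}:=(X^{\dagger})^{\top}\in\Gamma(T\Sigma)$, so that $X^{\dagger}|_{\Sigma}=\xi_{X}+f_{X}\nu$. Since $\Pi(X\wedge Y)=X^{\dagger}\wedge Y^{\dagger}$ and $\omega^{\sharp}$ is tangent to $\Sigma$, the definition \eqref{def:phi} gives
\[
\varphi_{\omega^{\sharp},X\wedge Y}=\langle \nu\wedge\omega^{\sharp},X^{\dagger}\wedge Y^{\dagger}\rangle=f_{X}\,h_{Y}-f_{Y}\,h_{X},\qquad h_{Y}:=\langle\omega^{\sharp},Y^{\dagger}\rangle=\omega(\xi_{Y}).
\]

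First I would recall that the flow of the Killing field $X^{\dagger}$ moves $\Sigma$ through congruent, hence minimal, hypersurfaces, so its normal component is a Jacobi field: $\mathcal{J}(f_{X})=0$ for every $X\in\fg$. Applying the Leibniz rule $\mathcal{J}(gh)=g\,\mathcal{J}(h)+h\,\Delta g+2\langle\nabla g,\nabla h\rangle$ to each product and using $\mathcal{J}(f_{X})=\mathcal{J}(f_{Y})=0$ then yields
\[
\mathcal{J}(\varphi_{\omega^{\sharp},X\wedge Y})=f_{X}\,\Delta h_{Y}+2\langle\nabla f_{X},\nabla h_{Y}\rangle-f_{Y}\,\Delta h_{X}-2\langle\nabla f_{Y},\nabla h_{X}\rangle.
\]

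Next I would evaluate each term at $p$ in a geodesic normal coordinate $\{\p_{\mu}\}$ of $\Sigma$. Since $X,Y\in\fn_{p}$, Lemma \ref{lem:key1}(ii) gives $\onab X^{\dagger}(p)=\onab Y^{\dagger}(p)=0$, and combining this with the Weingarten relation $\onab_{\p_{\mu}}\nu=-S\p_{\mu}$ (where $S$ is the shape operator, $\langle SX,Y\rangle=\langle A(X,Y),\nu\rangle$), the splitting $\onab_{\p_{\mu}}\omega^{\sharp}=\nabla_{\p_{\mu}}\omega^{\sharp}+A(\p_{\mu},\omega^{\sharp})$ (here $M=N$, so no term from a second immersion appears), and the Killing identity \eqref{eq:kill} for the second derivatives of $Y^{\dagger}$, one finds $\nabla f_{X}(p)=-S\xi_{X}$ and computes $\nabla h_{Y}(p)$ and $\Delta h_{Y}(p)$. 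I would then sort all contributions according to whether they carry a factor $f_{X}$ or $f_{Y}$. The $f$-free part comes only from the two cross terms and equals $-2\langle\nabla_{S\xi_{X}}\omega^{\sharp},\xi_{Y}\rangle+2\langle\nabla_{S\xi_{Y}}\omega^{\sharp},\xi_{X}\rangle$; since $\omega$ is closed we may symmetrize through $\langle\nabla_{u}\omega^{\sharp},v\rangle=\langle\nabla_{v}\omega^{\sharp},u\rangle$ and use self-adjointness of $S$, which turns this into exactly the claimed right-hand side $-2\langle A(\nabla_{\xi_{Y}}\omega^{\sharp},\xi_{X}),\nu\rangle+2\langle A(\nabla_{\xi_{X}}\omega^{\sharp},\xi_{Y}),\nu\rangle$.

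The main obstacle is to show that every term carrying a factor $f_{X}$ or $f_{Y}$ cancels. The $f_{X}f_{Y}$-terms drop out by antisymmetry in $X\leftrightarrow Y$, and the remainder collects into $f_{X}\langle\mathcal{Q},\xi_{Y}\rangle-f_{Y}\langle\mathcal{Q},\xi_{X}\rangle$ with
\[
\mathcal{Q}:=\sum_{\mu}\nabla_{\p_{\mu}}\nabla_{\p_{\mu}}\omega^{\sharp}+S^{2}\omega^{\sharp}-\sum_{\mu}\big(\overline{R}(\omega^{\sharp},\p_{\mu})\p_{\mu}\big)^{\top}.
\]
It therefore suffices to establish the intrinsic identity $\mathcal{Q}=0$, which is the sum of two standard facts: the Bochner--Weitzenb\"ock formula applied to the harmonic form $\omega$ gives $\sum_{\mu}\nabla_{\p_{\mu}}\nabla_{\p_{\mu}}\omega^{\sharp}={\rm Ric}^{\Sigma}(\omega^{\sharp})$, while the Gauss equation for the minimal hypersurface $\Sigma\subset M$ (using ${\rm tr}\,S=0$) gives $\sum_{\mu}(\overline{R}(\omega^{\sharp},\p_{\mu})\p_{\mu})^{\top}={\rm Ric}^{\Sigma}(\omega^{\sharp})+S^{2}\omega^{\sharp}$; substituting both, the intrinsic Ricci terms cancel and $\mathcal{Q}=0$. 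Keeping the tangential/normal bookkeeping in $\Delta h_{Y}$ straight, and matching these two curvature identities to the sign conventions of the paper, is the only delicate point; the remaining manipulations are routine.
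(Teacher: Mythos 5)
Your proposal is correct, and I checked that the bookkeeping closes: with $f_X=\langle X^{\dagger},\nu\rangle$, $h_Y=\langle\omega^{\sharp},Y^{\dagger}\rangle$, one indeed gets at $p$ (using $\onab X^{\dagger}(p)=\onab Y^{\dagger}(p)=0$, the identity \eqref{eq:kill}, the Einstein condition and the symmetry $\langle\overline{R}(\xi_Y,\nu)\nu,\omega^{\sharp}\rangle=\langle\overline{R}(\omega^{\sharp},\nu)\nu,\xi_Y\rangle$) that $\Delta h_Y(p)=-2\langle S^{2}\omega^{\sharp},\xi_Y\rangle+f_Y\cdot N$ for a scalar $N$ independent of $X,Y$, whose contribution cancels by antisymmetry, while the $f_Y\langle S^{2}\omega^{\sharp},\xi_X\rangle$-type terms from the cross products cancel against these; this is exactly your statement that the residue is $f_X\langle\mathcal{Q},\xi_Y\rangle-f_Y\langle\mathcal{Q},\xi_X\rangle$ with $\mathcal{Q}=0$ by Bochner--Weitzenb\"ock plus the traced Gauss equation for a minimal hypersurface. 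Your route is genuinely different in organization from the paper's: the paper computes $\Delta\varphi_{X\wedge Y}$ head-on by expanding $\onab_{\p_\mu}\onab_{\p_\mu}(\nu\wedge\omega^{\sharp})$ and $\onab_{\p_\mu}\onab_{\p_\mu}(X^{\dagger}\wedge Y^{\dagger})$ into the four blocks \eqref{eq:la11}--\eqref{eq:la14} (in particular it proves $\sum_\mu\onab_{\p_\mu}\onab_{\p_\mu}\nu=-|A|^{2}\nu$ via Codazzi, minimality and the Einstein condition), whereas you factor $\varphi_{\omega^{\sharp},X\wedge Y}=f_Xh_Y-f_Yh_X$ and let the classical Jacobi-field property $\mathcal{J}(f_X)=0$ of normal components of ambient Killing fields absorb the $\onab\onab\nu$-block and part of the curvature bookkeeping; this is cleaner and makes the antisymmetric cancellations more transparent. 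The one point you should make explicit is the justification of $\mathcal{J}(f_X)=0$: it is classical (it follows from invariance of the area functional under the isometry flow of $X^{\dagger}$, or from Simons' computation), but it is not free --- it encodes essentially the same Codazzi--minimality--Einstein computation the paper performs in \eqref{eq:la11}; so either cite it or include the short verification if a self-contained proof is wanted.
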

\begin{remark}
{\rm A similar formula was proved in \cite[Proposition 3]{ACS2} when $M$ is the flat torus. 
}
\end{remark}

To prove this, we shall compute $\mathcal{J}(\varphi_{\omega^{\sharp}, X\wedge Y})=\Delta\varphi_{\omega^{\sharp}, X\wedge Y}+(\overline{\rm Ric}(\nu,\nu)+|A|^{2})\varphi_{\omega^{\sharp}, X\wedge Y}$ at the fixed point $p$.  We denote $\varphi_{\omega^{\sharp},X\wedge Y}$ by $\varphi_{X\wedge Y}$ for simplicity.
We take the geodesic normal coordinate $\{\p_{\mu}\}_{\mu=1}^{n-1}$ of $\Sigma$ around $p\in \Sigma$. Then, for any $X,Y\in \fn_{p}$, we have $\onab X^{\dagger}(p)=\onab Y^{\dagger}(p)=0$ by Lemma \ref{lem:key1} and hence, we see
\begin{align}\label{eq:lap3}
\Delta \varphi_{X\wedge Y}(p)=\sum_{\mu=1}^{n-1}\langle \onab_{\p_{\mu}}\onab_{\p_{\mu}}(\nu\wedge \omega^{\sharp}), X^{\dagger}\wedge Y^{\dagger}\rangle(p)
+\sum_{\mu=1}^{n-1}\langle \nu\wedge \omega^{\sharp}, \onab_{\p_{\mu}}\onab_{\p_{\mu}}(X^{\dagger}\wedge Y^{\dagger})\rangle(p).
\end{align}
Here, we note that
\begin{align}\label{eq:lap21}
\onab_{\p_{\mu}}\onab_{\p_{\mu}}(\nu\wedge \omega^{\sharp})
=(\onab_{\p_{\mu}}\onab_{\p_{\mu}}\nu)\wedge \omega^{\sharp}+2\onab_{\p_{\mu}}\nu\wedge \onab_{\p_{\mu}}\omega^{\sharp}+\nu\wedge (\onab_{\p_{\mu}}\onab_{\p_{\mu}}\omega^{\sharp}).
\end{align}
Recall that any compact semi-simple RSS $M$ is an Einstein manifold with positive Ricci curvature. 

\begin{lemma}
Suppose $M=N$ is a compact semi-simple RSS with $\overline{\rm Ric}=cg$ and $\omega$ is a harmonic $1$-form on $\Sigma$. Then, we have
\begin{align}
\label{eq:la11}
\sum_{\mu=1}^{n-1}\langle (\onab_{\p_{\mu}}\onab_{\p_{\mu}}\nu)\wedge \omega^{\sharp}, X^{\dagger}\wedge Y^{\dagger}\rangle(p)
&= -|A|^{2}\varphi_{X\wedge Y}.\\
\label{eq:la12}
\sum_{\mu=1}^{n-1}2\langle\onab_{\p_{\mu}}\nu\wedge \onab_{\p_{\mu}}\omega^{\sharp},X^{\dagger}\wedge Y^{\dagger}\rangle(p)
&=-2\langle A(\nabla_{(Y^{\dagger})^{\top}}\omega^{\sharp},(X^{\dagger})^{\top}),\nu\rangle-2\sum_{\mu=1}^{n-1}\langle A(\p_{\mu}, (X^{\dagger})^{\top}),\nu\rangle \langle A(\p_{\mu},\omega^{\sharp}),Y^{\dagger}\rangle\\
&\quad +2\langle A(\nabla_{(X^{\dagger})^{\top}}\omega^{\sharp},(Y^{\dagger})^{\top}),\nu\rangle+2\sum_{\mu=1}^{n-1}\langle A(\p_{\mu}, (Y^{\dagger})^{\top}),\nu\rangle \langle A(\p_{\mu},\omega^{\sharp}),X^{\dagger}\rangle. \nonumber\\
\label{eq:la13}
\sum_{\mu=1}^{n-1}\langle\nu\wedge (\onab_{\p_{\mu}}\onab_{\p_{\mu}}\omega^{\sharp}),X^{\dagger}\wedge Y^{\dagger}\rangle(p)
&= -\langle \overline{R}(Y^{\dagger}, \nu)\nu, \omega^{\sharp}\rangle\langle X^{\dagger}, \nu\rangle-2\sum_{\mu=1}^{n-1}\langle A(\p_{\mu}, (Y^{\dagger})^{\top}),\nu\rangle \langle A(\p_{\mu},\omega^{\sharp}),X^{\dagger}\rangle\\
&\quad+ \langle \overline{R}(X^{\dagger}, \nu)\nu, \omega^{\sharp}\rangle\langle Y^{\dagger}, \nu\rangle+2\sum_{\mu=1}^{n-1}\langle A(\p_{\mu}, (X^{\dagger})^{\top}),\nu\rangle \langle A(\p_{\mu},\omega^{\sharp}),Y^{\dagger}\rangle\nonumber\\
&\quad +c\cdot \varphi_{X\wedge Y}.\nonumber\\
\label{eq:la14}
\sum_{\mu=1}^{n-1}\langle \nu\wedge \omega^{\sharp}, \onab_{\p_{\mu}}\onab_{\p_{\mu}}(X^{\dagger}\wedge Y^{\dagger})\rangle(p)
&=-\langle \overline{R}(X^{\dagger}, \nu)\nu, \omega^{\sharp}\rangle\langle Y^{\dagger}, \nu\rangle\\
&\quad+\langle \overline{R}(Y^{\dagger}, \nu)\nu, \omega^{\sharp}\rangle\langle X^{\dagger}, \nu\rangle-2c\cdot \varphi_{X\wedge Y}.\nonumber
\end{align}
\end{lemma}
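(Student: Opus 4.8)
The plan is to verify the four identities \eqref{eq:la11}--\eqref{eq:la14}, which together evaluate the three summands of \eqref{eq:lap21} and the remaining term of \eqref{eq:lap3}, by a direct computation at the fixed point $p$, exploiting the structure available when $M=N$. Since $F=\mathrm{id}$, the second fundamental form of $M\hookrightarrow N$ vanishes, so $\onab=\nabla^{M}$, $\overline{R}=R^{M}$, and the only extrinsic quantity is the second fundamental form $A$ of $\Sigma\hookrightarrow M$; I write $S$ for the associated shape operator, $\langle SX,Y\rangle=\langle A(X,Y),\nu\rangle$. In the geodesic normal coordinates $\{\p_{\mu}\}$ already fixed, I would first record the three derivative formulas that drive everything: the Weingarten relation $\onab_{\p_{\mu}}\nu=-S\p_{\mu}$ (a tangent vector), the Gauss decomposition $\onab_{\p_{\mu}}\omega^{\sharp}=\nabla_{\p_{\mu}}\omega^{\sharp}+A(\p_{\mu},\omega^{\sharp})$, and the Killing identities $\onab X^{\dagger}(p)=0$ together with $\onab_{\p_{\mu}}\onab_{\p_{\mu}}X^{\dagger}(p)=-\overline{R}(X^{\dagger},\p_{\mu})\p_{\mu}$ from Lemma \ref{lem:key1} and \eqref{eq:kill}.

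The common mechanism is to expand every pairing through $\langle U\wedge V,X^{\dagger}\wedge Y^{\dagger}\rangle=\langle U,X^{\dagger}\rangle\langle V,Y^{\dagger}\rangle-\langle U,Y^{\dagger}\rangle\langle V,X^{\dagger}\rangle$ and then split each ambient vector via $X^{\dagger}=(X^{\dagger})^{\top}+\langle X^{\dagger},\nu\rangle\nu$. A useful preliminary observation, which I would isolate at the outset, is that when the first factor of the wedge is $\nu$, the \emph{normal} component of the other entry drops out of the antisymmetric pairing: for any $L$ one has $\langle\nu\wedge L,X^{\dagger}\wedge Y^{\dagger}\rangle=\langle\nu,X^{\dagger}\rangle\langle L^{\top},(Y^{\dagger})^{\top}\rangle-\langle\nu,Y^{\dagger}\rangle\langle L^{\top},(X^{\dagger})^{\top}\rangle$, the $\langle L,\nu\rangle$ terms cancelling identically. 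This reduces \eqref{eq:la13} to computing only the tangential part of $\sum_{\mu}\onab_{\p_{\mu}}\onab_{\p_{\mu}}\omega^{\sharp}$.

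With these reductions, \eqref{eq:la11} is the easiest: the normal component $\sum_{\mu}\langle\onab_{\p_{\mu}}\onab_{\p_{\mu}}\nu,\nu\rangle=-\sum_{\mu}|\onab_{\p_{\mu}}\nu|^{2}=-|A|^{2}$ produces $-|A|^{2}\varphi_{X\wedge Y}$, while the tangential part $\sum_{\mu}(\onab_{\p_{\mu}}\onab_{\p_{\mu}}\nu)^{\top}=-\sum_{\mu}(\nabla_{\p_{\mu}}S)\p_{\mu}$ is killed by the contracted Codazzi identity together with minimality (${\rm tr}\,S=0$) and the Einstein condition ($\overline{\rm Ric}(\,\cdot\,,\nu)^{\top}=0$). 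For \eqref{eq:la14} I would substitute \eqref{eq:kill} into $\onab_{\p_{\mu}}\onab_{\p_{\mu}}(X^{\dagger}\wedge Y^{\dagger})$, expand, and collapse the partial traces using $\sum_{\mu}\overline{R}(Z,\p_{\mu})\p_{\mu}=\overline{\rm Ric}(Z)-\overline{R}(Z,\nu)\nu=cZ-\overline{R}(Z,\nu)\nu$; this yields exactly the two curvature terms and $-2c\,\varphi_{X\wedge Y}$. For \eqref{eq:la12} only the \emph{closedness} of $\omega$ is needed: after expanding $\onab_{\p_{\mu}}\nu\wedge\onab_{\p_{\mu}}\omega^{\sharp}$, the cross term $\sum_{\mu}\langle S\p_{\mu},(X^{\dagger})^{\top}\rangle\nabla_{\p_{\mu}}\omega^{\sharp}=\nabla_{S(X^{\dagger})^{\top}}\omega^{\sharp}$ is converted by the symmetry of $\nabla\omega$ into $\langle A(\nabla_{(Y^{\dagger})^{\top}}\omega^{\sharp},(X^{\dagger})^{\top}),\nu\rangle$, and the remaining pieces assemble into the paired second-fundamental-form sums.

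The main obstacle is \eqref{eq:la13}. Here I would differentiate the Gauss decomposition twice: the tangential part of $\sum_{\mu}\onab_{\p_{\mu}}\onab_{\p_{\mu}}\omega^{\sharp}$ equals $\sum_{\mu}\nabla_{\p_{\mu}}\nabla_{\p_{\mu}}\omega^{\sharp}-S^{2}\omega^{\sharp}$, where the extra $-S^{2}\omega^{\sharp}$ arises from differentiating the normal term $A(\p_{\mu},\omega^{\sharp})$ and is precisely what supplies the factor $2$ in the paired terms. The intrinsic rough Laplacian $\sum_{\mu}\nabla_{\p_{\mu}}\nabla_{\p_{\mu}}\omega^{\sharp}$ is then replaced, via the Bochner--Weitzenb\"ock formula for the harmonic form $\omega$ (this is where \emph{coclosedness} enters) followed by the Gauss equation for the Ricci tensor and the Einstein condition, by $c\,\omega^{\sharp}-(\overline{R}(\omega^{\sharp},\nu)\nu)^{\top}-S^{2}\omega^{\sharp}$. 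Combining gives tangential part $c\,\omega^{\sharp}-(\overline{R}(\omega^{\sharp},\nu)\nu)^{\top}-2S^{2}\omega^{\sharp}$, and pairing as above produces \eqref{eq:la13}. The genuine difficulty is bookkeeping: tracking the tangent/normal splittings, the symmetry of $\nabla\omega$, the contracted Codazzi identity, and the Einstein constant simultaneously without sign errors. A decisive cross-check, which I would state explicitly, is that summing \eqref{eq:la11}--\eqref{eq:la14} and adding $(\overline{\rm Ric}(\nu,\nu)+|A|^{2})\varphi_{X\wedge Y}=(c+|A|^{2})\varphi_{X\wedge Y}$ must cancel all $\varphi_{X\wedge Y}$-terms, all $\overline{R}(\,\cdot\,,\nu)\nu$-terms, and all paired $A$-sums, leaving only the two antisymmetric terms of Proposition \ref{prop:jac}; this cancellation fixes every sign and constant.
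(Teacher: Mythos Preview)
Your proposal is correct and follows essentially the same route as the paper: the paper likewise shows that the tangential part of $\sum_{\mu}\onab_{\p_{\mu}}\onab_{\p_{\mu}}\nu$ vanishes via Codazzi, minimality, and the Einstein condition for \eqref{eq:la11}; uses closedness of $\omega$ to symmetrize $\nabla\omega^{\sharp}$ for \eqref{eq:la12}; combines the Gauss decomposition with Bochner--Weitzenb\"ock and the Gauss equation for \eqref{eq:la13}; and substitutes \eqref{eq:nn}/\eqref{eq:kill} and collapses via $\overline{\rm Ric}=cg$ for \eqref{eq:la14}. Your explicit use of the shape operator $S$ and the upfront observation that the normal component of the second factor drops out of $\langle\nu\wedge L,\,\cdot\,\rangle$ are cosmetic streamlinings, and your final cross-check against Proposition \ref{prop:jac} is a sensible addition, but the argument is the same.
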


\begin{proof}
First, we shall show \eqref{eq:la11}. We have
\begin{align*}
\langle \onab_{\p_{\mu}}\onab_{\p_{\mu}}\nu, \p_{\xi}\rangle
&=\p_{\mu}\langle \onab_{\p_{\mu}}\nu, \p_{\xi}\rangle-\langle\onab_{\p_{\mu}}\nu,\onab_{\p_{\mu}}\p_{\xi}\rangle\\
&=-\p_{\mu}\langle A(\p_{\mu},\p_{\xi}), \nu\rangle-\langle\onab_{\p_{\mu}}\nu,\onab_{\p_{\mu}}\p_{\xi}\rangle\\
&=-\langle \onab_{\p_{\mu}}(A(\p_{\mu},\p_{\xi})),\nu\rangle-\langle A(\p_{\mu},\p_{\xi}),\onab_{\p_{\mu}}\nu\rangle-\langle\onab_{\p_{\mu}}\nu,\onab_{\p_{\mu}}\p_{\xi}\rangle.
\end{align*}
Since $A(\p_{\mu},\p_{\xi})$ is parallel to $\nu$ and $\langle \onab_{\p_{\mu}}\nu,\nu\rangle=0$ along $\Sigma$, we have $\langle A(\p_{\mu},\p_{\xi}),\onab_{\p_{\mu}}\nu\rangle=0$. Moreover, we have $\onab_{\p_{\mu}}\p_{\xi}(p)=A(\p_{\mu},\p_{\xi})(p)$ at the fixed point $p\in \Sigma$, and hence, $\langle\onab_{\p_{\mu}}\nu,\onab_{\p_{\mu}}\p_{\xi}\rangle(p)=0$. Thus, we see
\begin{align}\label{eq:la112}
\langle \onab_{\p_{\mu}}\onab_{\p_{\mu}}\nu, \p_{\xi}\rangle(p)
&=-\langle \onab_{\p_{\mu}}(A(\p_{\mu},\p_{\xi})),\nu\rangle\\
&=-\langle (\nabla_{\p_{\mu}}^{\perp}A)(\p_{\mu},\p_{\xi}),\nu\rangle\nonumber\\
&=-\langle (\nabla_{\p_{\mu}}^{\perp}A)(\p_{\xi},\p_{\mu}),\nu\rangle\nonumber\\
&=-\langle (\nabla_{\p_{\xi}}^{\perp}A)(\p_{\mu},\p_{\mu}),\nu\rangle-\langle(\overline{R}(\p_{\mu},\p_{\xi})\p_{\mu})^{\perp},\nu\rangle \nonumber\\
&=-\langle (\nabla_{\p_{\xi}}^{\perp}A)(\p_{\mu},\p_{\mu}),\nu\rangle+\langle \overline{R}(\p_{\xi},\p_{\mu})\p_{\mu},\nu\rangle,\nonumber
\end{align}
where $\nabla^{\perp}$ is the normal connection and $(\nabla_{X}^{\perp}A)(Y,Z)=\nabla^{\perp}_{X}(A(Y,Z))-A(\nabla_{X}Y,Z)-A(Y,\nabla_{X}Z)$, and we used the Codazzi equation in the forth equality. Since $\Sigma$ is a minimal hypersurface, we have 
\[
\sum_{\mu,\lambda=1}^{n-1}g^{\mu\lambda}\langle A(\p_{\mu},\p_{\lambda}),\nu\rangle =0
\]
around the point $p$. Differentiating this equation in the direction of $\p_{\xi}$, we see 
\begin{align*}
0&=\p_{\xi}\Big(\sum_{\mu,\lambda=1}^{n-1}g^{\mu\lambda}\langle A(\p_{\mu},\p_{\lambda}),\nu\rangle\Big)(p)=\sum_{\mu=1}^{n-1}\partial_{\xi}\langle A(\p_{\mu},\p_{\mu}),\nu\rangle(p)\\
&=\sum_{\mu=1}^{n-1} \langle \nabla_{\p_{\xi}}^{\perp}(A(\p_{\mu},\p_{\mu})),\nu\rangle(p)=\sum_{\mu=1}^{n-1} \langle (\nabla_{\p_{\xi}}^{\perp}A)(\p_{\mu},\p_{\mu})),\nu\rangle(p)
\end{align*}
for any $\xi=1,\ldots,n-1$.
Moreover, since $M$ is Einstein, we have
\[
\sum_{\mu=1}^{n-1}\langle \overline{R}(\p_{\xi},\p_{\mu})\p_{\mu},\nu\rangle={\overline{\rm Ric}}(\p_{\xi},\nu)=c\langle \p_{\xi},\nu\rangle=0
\]
for any $\xi=1,\ldots,n-1$. Therefore, by \eqref{eq:la112}, we obtain
\[
\sum_{\mu=1}^{n-1}\langle \onab_{\p_{\mu}}\onab_{\p_{\mu}}\nu, \p_{\xi}\rangle(p)=0\quad \textup{for any $\xi=1,\ldots, n-1$.}
\]
In particular, $\sum_{\mu=1}^{n-1}\onab_{\p_{\mu}}\onab_{\p_{\mu}}\nu (p)$ is proportional to $\nu$.
Since $\langle \onab_{\p_{\mu}}\nu,\nu\rangle=0$ along $\Sigma$, we see
\begin{align*}
\sum_{\mu=1}^{n-1}\langle \onab_{\p_{\mu}}\onab_{\p_{\mu}}\nu, \nu\rangle(p)=-\sum_{\mu=1}^{n-1}|\onab_{\p_{\mu}}\nu|^{2}=-|A|^{2}
\end{align*}
and this implies \eqref{eq:la11}.

Next, we shall prove  \eqref{eq:la12}. Note that 
\begin{align}\label{eq:la121}
2\sum_{\mu=1}^{n-1}\langle\onab_{\p_{\mu}}\nu\wedge \onab_{\p_{\mu}}\omega^{\sharp},X^{\dagger}\wedge Y^{\dagger}\rangle=2\sum_{\mu=1}^{n-1}\Big(\langle \onab_{\p_{\mu}}\nu, X^{\dagger}\rangle\langle\onab_{\p_{\mu}}\omega^{\sharp},Y^{\dagger}\rangle-\langle \onab_{\p_{\mu}}\nu, Y^{\dagger}\rangle\langle\onab_{\p_{\mu}}\omega^{\sharp},X^{\dagger}\rangle\Big).
\end{align}
Since $\omega$ is a closed $1$-form, we have $\langle \nabla_{T}\omega^{\sharp},U\rangle=\langle \nabla_{U}\omega^{\sharp},T\rangle$ for any $T,U\in T_{p}\Sigma$.
Thus, for any $X\in T_{p}M$, we see
\begin{align*}
\langle \onab_{\p_{\mu}}\omega^{\sharp}, X\rangle
&=\langle \nabla_{\p_{\mu}}\omega^{\sharp}, X^{\top}\rangle+\langle \onab_{\p_{\mu}}\omega^{\sharp}, \nu\rangle\langle \nu,X\rangle\\
&=\langle \nabla_{X^{\top}}\omega^{\sharp}, \p_{\mu}\rangle+ \langle A(\p_{\mu},\omega^{\sharp}), X\rangle.
\end{align*}
Therefore, we have
\begin{align*}
\sum_{\mu=1}^{n-1}\langle \onab_{\p_{\mu}}\nu, X^{\dagger}\rangle\langle\onab_{\p_{\mu}}\omega^{\sharp},Y^{\dagger}\rangle(p)
&=-\sum_{\mu=1}^{n-1}\langle A(\p_{\mu}, (X^{\dagger})^{\top}),\nu\rangle\{\langle \nabla_{(Y^{\dagger})^{\top}}\omega^{\sharp}, \p_{\mu}\rangle+ \langle A(\p_{\mu},\omega^{\sharp}), Y^{\dagger}\rangle\}(p)\\
&=-\langle A(\nabla_{(Y^{\dagger})^{\top}}\omega^{\sharp},(X^{\dagger})^{\top}),\nu\rangle
-\sum_{\mu=1}^{n-1}\langle A(\p_{\mu}, (X^{\dagger})^{\top}),\nu\rangle \langle A(\p_{\mu},\omega^{\sharp}),Y^{\dagger}\rangle.
\end{align*}
Substituting this to \eqref{eq:la121}, we obtain \eqref{eq:la12}.

We shall consider  \eqref{eq:la13}. We have
\begin{align*}
\langle\onab_{\p_{\mu}}\onab_{\p_{\mu}}\omega^{\sharp},\p_{\xi}\rangle(p)
&=\p_{\mu}\langle\onab_{\p_{\mu}}\omega^{\sharp},\p_{\xi}\rangle-\langle \onab_{\p_{\mu}}\omega^{\sharp},\onab_{\p_{\mu}}\p_{\xi}\rangle\\
&=\p_{\mu}\langle\nabla_{\p_{\mu}}\omega^{\sharp},\p_{\xi}\rangle-\langle \onab_{\p_{\mu}}\omega^{\sharp},A(\p_{\mu},\p_{\xi})\rangle\\
&=\langle \nabla_{\p_{\mu}}\nabla_{\p_{\mu}}\omega^{\sharp},\p_{\xi}\rangle-\langle A(\p_{\mu},\omega^{\sharp}),A(\p_{\mu},\p_{\xi})\rangle.
\end{align*}
Thus, 
\begin{align}\label{eq:la132}
\langle\nu\wedge \onab_{\p_{\mu}}\onab_{\p_{\mu}}\omega^{\sharp},X^{\dagger}\wedge Y^{\dagger}\rangle(p)
&=\sum_{\xi=1}^{n-1}
\langle\onab_{\p_{\mu}}\onab_{\p_{\mu}}\omega^{\sharp},\p_{\xi}\rangle\langle\nu\wedge \p_{\xi},X^{\dagger}\wedge Y^{\dagger}\rangle\\
&=\sum_{\xi=1}^{n-1}
\langle\onab_{\p_{\mu}}\onab_{\p_{\mu}}\omega^{\sharp},\p_{\xi}\rangle\Big(\langle\nu,X^{\dagger}\rangle\langle \p_{\xi}, Y^{\dagger}\rangle-\langle\nu,Y^{\dagger}\rangle\langle \p_{\xi}, X^{\dagger}\rangle\Big)\nonumber\\
&=\Big(\langle \nabla_{\p_{\mu}}\nabla_{\p_{\mu}}\omega^{\sharp},(Y^{\dagger})^{\top}\rangle-\langle A(\p_{\mu},\omega^{\sharp}),A(\p_{\mu},(Y^{\dagger})^{\top})\rangle\Big)\langle  X^{\dagger},\nu\rangle \nonumber\\
&\quad -\Big(\langle \nabla_{\p_{\mu}}\nabla_{\p_{\mu}}\omega^{\sharp},(X^{\dagger})^{\top}\rangle-\langle A(\p_{\mu},\omega^{\sharp}),A(\p_{\mu},(X^{\dagger})^{\top})\rangle\Big)\langle Y^{\dagger},\nu\rangle. \nonumber
\end{align}
Note that we have
\begin{align}\label{eq:la133}
\langle A(\p_{\mu},\omega^{\sharp}),A(\p_{\mu},(Y^{\dagger})^{\top})\rangle\langle X^{\dagger},\nu\rangle=\langle A(\p_{\mu},(Y^{\dagger})^{\top}), \nu\rangle\langle A(\p_{\mu},\omega^{\sharp}),X^{\dagger}\rangle.
\end{align}

Since $\omega$ is a harmonic $1$-form, the Bochner-Weitzenb\"ock formula and the Gauss equation for minimal hypersurface $\Sigma\to N$  shows that, for any $T\in T_{p}\Sigma$, we have
\begin{align*}
\sum_{\mu=1}^{n-1}\langle \nabla_{\p_{\mu}}\nabla_{\p_{\mu}}\omega^{\sharp},T\rangle(p)
&=\sum_{\mu=1}^{n-1}\langle R(\omega^{\sharp},\p_{\mu})\p_{\mu}, T\rangle\\
&=\sum_{\mu=1}^{n-1}\langle \overline{R}(\omega^{\sharp},\p_{\mu})\p_{\mu}, T\rangle-\sum_{\mu=1}^{n-1}\langle A(\p_{\mu},\omega^{\sharp}), A(\p_{\mu}, T)\rangle\\
&={\overline{\rm Ric}}(\omega^{\sharp}, T)-\langle\overline{R}(\omega^{\sharp},\nu)\nu, T\rangle-\sum_{\mu=1}^{n-1}\langle A(\p_{\mu},\omega^{\sharp}), A(\p_{\mu}, T)\rangle.
\end{align*}
Substituting this formula to \eqref{eq:la132} and using \eqref{eq:la133}, we obtain \eqref{eq:la13}.

Finally, we shall show \eqref{eq:la14}. The equation \eqref{eq:nn} shows that 
\begin{align*}
\langle \nu\wedge \omega^{\sharp}, \onab_{\p_{\mu}}\onab_{\p_{\mu}}(X^{\dagger}\wedge Y^{\dagger})\rangle(p)
&=\langle \nu\wedge \omega^{\sharp}, -(\overline{R}(X^{\dagger},\p_{\mu})\p_{\mu})\wedge Y^{\dagger}+(\overline{R}(Y^{\dagger},\p_{\mu})\p_{\mu})\wedge X^{\dagger}\rangle(p),
\end{align*}
where
\begin{align*}
\sum_{\mu=1}^{n-1}\langle \nu\wedge \omega^{\sharp}, (\overline{R}(X^{\dagger},\p_{\mu})\p_{\mu})\wedge Y^{\dagger}\rangle
&=\sum_{\mu=1}^{n-1}\Big(\langle \overline{R}(X^{\dagger},\p_{\mu})\p_{\mu},\nu\rangle\langle Y^{\dagger}, \omega^{\sharp}\rangle
-\langle \overline{R}(X^{\dagger},\p_{\mu})\p_{\mu},\omega^{\sharp}\rangle\langle Y^{\dagger}, \nu\rangle\Big)\\
&=\overline{\rm Ric}(X^{\dagger},\nu)\langle Y^{\dagger}, \omega^{\sharp}\rangle
-\overline{\rm Ric}(X^{\dagger},\omega^{\sharp})\langle Y^{\dagger},\nu\rangle+\langle \overline{R}(X^{\dagger}, \nu)\nu, \omega^{\sharp}\rangle\langle Y^{\dagger}, \nu\rangle\\
&=c\varphi_{X\wedge Y}
+\langle \overline{R}(X^{\dagger}, \nu)\nu, \omega^{\sharp}\rangle\langle Y^{\dagger}, \nu\rangle.
\end{align*} 
This implies \eqref{eq:la14}.
\end{proof}

By \eqref{eq:lap3}--\eqref{eq:la14}, we obtain
\begin{align*}
\mathcal{J}(\varphi_{X\wedge Y})(p)&=\Delta\varphi_{X\wedge Y}+(c+|A|^{2})\varphi_{X\wedge Y}\\
&=
-2\langle A(\nabla_{(Y^{\dagger})^{\top}}\omega^{\sharp},(X^{\dagger})^{\top}),\nu\rangle
+2\langle A(\nabla_{(X^{\dagger})^{\top}}\omega^{\sharp},(Y^{\dagger})^{\top}),\nu\rangle
\end{align*}
for any $X,Y\in \fn_{p}$, and this proves Proposition \ref{prop:jac}.

As a consequence,  if a harmonic $1$-form $\omega$ satisfies $\mathcal{J}(\varphi_{\omega^{\sharp},\alpha})=0$ along $\Sigma$ for any $\alpha\in \wedge^{2}\fg$, then we obtain 
\[A(\nabla_{X}\omega^{\sharp},Y)=A(\nabla_{Y}\omega^{\sharp},X)\]
 for any $X,Y\in T_{p}\Sigma$ and any $p\in \Sigma$ since $\fn_{p}$ is isomorphic to $T_{p}M$ by the correspondence $X\mapsto X^{\dagger}(p)$. Thus, we define  a linear subspace of $\mathcal{H}$ by
 \begin{align*}
\mathcal{H}_{1}:=\{\omega\in \mathcal{H}\mid A(\nabla_{X}\omega^{\sharp},Y)=A(\nabla_{Y}\omega^{\sharp},X),\,\forall X,Y\in T_{p}\Sigma, \forall p\in \Sigma\}.
 \end{align*}
 Then, $\mathcal{H}_{0}=\{\omega\in \mathcal{H}\mid \mathcal{J}(\varphi_{\omega^{\sharp},\alpha})=0, \forall \alpha\in \wedge^{2}\fg\}$ is a linear subspace of $\mathcal{H}_{1}$.
 
We shall estimate the dimension of $\mathcal{H}_{1}$.
The following proposition is a generalization of \cite[Proposition 5]{ACS2}, and the proof is inspired by the argument of Li \cite{Li}.

\begin{proposition}\label{prop:key4}
Suppose $\Sigma^{n-1}$ is a closed two-sided minimal hypersurface in a compact semi-simple symmetric space $M^{n}$.  Then we have
\[
{\rm dim}\mathcal{H}_{1}\leq 2n-3.
\]
\end{proposition}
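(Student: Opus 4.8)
Since $\Sigma$ is two-sided I fix a unit normal $\nu$ and write the second fundamental form as $A(X,Y)=\langle \mathcal{A}X,Y\rangle\,\nu$, where $\mathcal{A}$ is the self-adjoint shape operator, which is trace-free because $\Sigma$ is minimal. For a harmonic $1$-form $\omega$ the endomorphism $S:=\nabla\omega^{\sharp}$ is self-adjoint (because $d\omega=0$) and trace-free (because $d^{*}\omega=0$). A short computation shows that the defining condition $A(\nabla_{X}\omega^{\sharp},Y)=A(\nabla_{Y}\omega^{\sharp},X)$ of $\mathcal{H}_{1}$ is equivalent to $\langle\mathcal{A}SX,Y\rangle=\langle S\mathcal{A}X,Y\rangle$ for all $X,Y$, i.e. to $[\mathcal{A},S]=0$. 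Thus $\mathcal{H}_{1}=\{\omega\in\mathcal{H}:\ \nabla\omega^{\sharp}\text{ commutes with }\mathcal{A}\text{ at every point}\}$, and this is the form in which I will work.

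The plan is to bound $\dim\mathcal{H}_{1}$ by a pointwise jet evaluation combined with unique continuation. Fix a point $p$ at which the number $r$ of distinct principal curvatures is maximal, so that near $p$ the curvature eigendistributions are smooth, and consider the linear map $\mathcal{H}_{1}\to T_{p}\Sigma\oplus\mathrm{End}(T_{p}\Sigma)$, $\omega\mapsto(\omega^{\sharp}(p),\ (\nabla\omega^{\sharp})(p))$. First I would show this map is injective: a harmonic form whose value and first covariant derivative vanish at $p$ vanishes identically. Here I would use that $\Sigma$ is real-analytic (elliptic regularity, $N$ being a symmetric space) together with the Weitzenb\"ock identity $\nabla^{*}\nabla\omega=-W\omega$ and the Codazzi-type relation $(\nabla_{X}S)Y-(\nabla_{Y}S)X=R(X,Y)\omega^{\sharp}$, which allow one to propagate the vanishing to all orders. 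Granting injectivity, and splitting off the kernel of $\omega\mapsto\omega^{\sharp}(p)$, it suffices to prove that on the subspace of forms with $\omega^{\sharp}(p)=0$ the value $S_{p}:=(\nabla\omega^{\sharp})(p)$ ranges in a space of dimension at most $n-2$; then $\dim\mathcal{H}_{1}\leq(n-1)+(n-2)=2n-3$.

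Because $S_{p}$ is self-adjoint, trace-free and commutes with $\mathcal{A}_{p}$, it is block-diagonal with respect to the eigenspace decomposition $T_{p}\Sigma=\bigoplus_{i=1}^{r}E_{\lambda_{i}}$, acting a priori as an arbitrary self-adjoint endomorphism on each $E_{\lambda_{i}}$. The crux is to show that on every eigenspace of multiplicity $m_{i}\geq 2$ the restriction $S_{p}|_{E_{\lambda_{i}}}$ is a scalar multiple of the identity. Once this is known, $S_{p}$ is determined by $r$ scalars subject to the single trace-free relation, hence lies in a space of dimension $r-1\leq (n-1)-1=n-2$, which is exactly the bound needed. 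When all principal curvatures at $p$ are distinct every $m_{i}=1$ and scalarity is automatic, which recovers the earlier arguments of \cite{ACS2,MR}.

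The main obstacle — and the point at which the distinct-curvature hypothesis of \cite{ACS2,MR} is removed — is precisely this scalarity of $S_{p}$ on a multiplicity-$\geq 2$ eigenspace. To establish it I would differentiate the identity $[\mathcal{A},S]=0$ and combine it with the classical structure theory of curvature eigendistributions of constant multiplicity $\geq 2$: by the Codazzi equation such a distribution $E_{\lambda_{i}}$ is integrable with umbilic leaves along which $\lambda_{i}$ is constant, and the residual ambient curvature terms are controlled since $N$ is symmetric (in particular Einstein, $\onab\,\overline{R}=0$). These facts should force the block $S|_{E_{\lambda_{i}}}$ to carry no internal eigenstructure, i.e. to be scalar. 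Carrying out this differentiation carefully, and simultaneously making the unique-continuation step rigorous in the non-generic situation, is where the genuine work lies; the algebraic dimension count of the previous paragraph is then immediate.
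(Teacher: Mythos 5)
Your reformulation of $\mathcal{H}_1$ as $\{\omega\in\mathcal{H}\mid [S_\nu,\nabla\omega^\sharp]=0\}$ is exactly the paper's Lemma~\ref{lem:aeq}, and the overall strategy (pin down $\omega$ by finitely many numbers at one point, then invoke unique continuation) is the right one. But both steps you build on it have genuine gaps. First, the injectivity of the $1$-jet map $\omega\mapsto(\omega^\sharp(p),(\nabla\omega^\sharp)(p))$ cannot be obtained as you describe: the statement ``a harmonic form whose value and first covariant derivative vanish at $p$ vanishes identically'' is false for general harmonic forms. The Weitzenb\"ock identity prescribes only the trace of $\nabla^2\omega^\sharp$ and the Ricci identity $(\nabla_XS)Y-(\nabla_YS)X=R(X,Y)\omega^\sharp$ only its antisymmetrization, so the totally symmetric trace-free part of the second derivative is undetermined and the vanishing cannot be propagated to higher order; if the claim were true it would bound $b_1(\Sigma)$ of \emph{every} closed minimal hypersurface by the dimension of the $1$-jet space, which already fails for high-genus minimal surfaces in $S^3$. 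The injectivity must use $\omega\in\mathcal{H}_1$ in an essential way. In the paper, the commutation relation is used to produce, near a suitable point, a \emph{smooth} orthonormal frame $\{E_\mu\}$ simultaneously diagonalizing $S_\nu$ and $\nabla\omega^\sharp$ (Proposition~\ref{prop:key5}, a double application of Singley's smoothness theorem --- this is precisely where the distinct-curvature hypothesis of \cite{ACS2, MR} is removed); in that frame the $2n-3$ functions $\omega(E_\mu)$ and $\nabla\omega^\sharp(E_i,E_i)$, $i\le n-2$, close up into a first-order linear ODE system along radial geodesics, and it is this ODE, not analyticity, that forces determination by the values at $p$.

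Second, your dimension count of the image requires $(\nabla\omega^\sharp)(p)$ to act as a scalar on each eigenspace of $S_\nu(p)$ of multiplicity $\ge 2$, which you acknowledge is unproved; it is both stronger than what is needed and doubtful (e.g.\ when $A\equiv 0$, so that $\mathcal{H}_1=\mathcal{H}$, it would force every harmonic $1$-form to be parallel). The paper never needs pointwise scalarity: the off-diagonal freedom of $\nabla\omega^\sharp$ inside a multiplicity-$\ge 2$ block is absorbed into the choice of the smooth common eigenframe, and only the $n-2$ diagonal entries survive as unknowns of the ODE system. Note also that smoothness of the eigendistributions of $S_\nu$ alone (your choice of $p$ with maximal $r$) does not yet give a smooth frame adapted to $\nabla\omega^\sharp$ inside a block; that is the content of Proposition~\ref{prop:key5}. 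Without that proposition, or a proof of your scalarity lemma, the argument does not close.
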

\begin{remark}
{\rm 
This proposition was proved in \cite{ACS2} when $M$ is a flat torus and under an additional assumption that ``there is a point $p\in \Sigma$ so that the eigenvalues of shape operator $S_{\nu}(p)$ at $p$ are distinct''. Our proof  below is a refinement of the argument  in \cite{ACS2, Li} combining with a technical result given in Appendix \ref{A:smooth}, and it turns out that we can remove the additional assumption even when $M$ is a flat torus. 

We also remark that in \cite{MR}, Mendes-Radeschi provide more precise estimate of ${\rm dim}\mathcal{H}_{0}(\leq {\rm dim}\mathcal{H}_{1})$ when there is a point $p$ such that $S_{\nu}(p)$ has distinct eigenvalues. 
}
\end{remark}
 
To prove this, we use the following fact.
 
 \begin{lemma}\label{lem:aeq}
Let $\tau$ be a closed $1$-form on $\Sigma$, and we set $T=\tau^{\sharp}$. Then, 
$A(\nabla_XT, Y)=A(\nabla_YT, X)$ for any $X,Y\in T_p\Sigma$ if and only if $ [S_\nu, \nabla T]=0$,
where $S_{\nu}: T_{p}\Sigma\to T_{p}\Sigma$ is the shape operator at $p$ defined by $S_{\nu}(X):=-(\onab_{X}\nu)^{\top}$. 

\end{lemma}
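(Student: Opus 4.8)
The plan is to translate the tensorial identity $A(\nabla_X T, Y)=A(\nabla_Y T, X)$ into a statement purely about symmetric endomorphisms of $T_p\Sigma$, after which the equivalence with $[S_\nu,\nabla T]=0$ reduces to a standard linear-algebra fact. The whole argument splits into three short steps, and I expect no genuine obstacle beyond being careful with the two symmetry observations.

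First I would use the triviality of the normal bundle (in force throughout this section). Since $\nu_p\Sigma$ is one-dimensional and spanned by the unit field $\nu$, the Weingarten relation encodes $A$ by a single scalar form: $A(U,V)=\langle A(U,V),\nu\rangle\,\nu=\langle S_\nu U,V\rangle\,\nu$ for all tangent $U,V$, where I recall $\langle S_\nu U,V\rangle=\langle A(U,V),\nu\rangle$ follows from $\langle \onab_U V,\nu\rangle=-\langle V,\onab_U\nu\rangle$. Applying this with $(U,V)=(\nabla_X T,Y)$ and $(U,V)=(\nabla_Y T,X)$, the condition $A(\nabla_X T, Y)=A(\nabla_Y T, X)$ is equivalent to the scalar identity
\[
\langle S_\nu(\nabla_X T),Y\rangle=\langle S_\nu(\nabla_Y T),X\rangle\quad\text{for all }X,Y\in T_p\Sigma.
\]

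Next I would record that the covariant derivative $\nabla T$, viewed as the endomorphism $X\mapsto\nabla_X T$, is self-adjoint: this is exactly where the hypothesis that $\tau$ is closed enters, since for the Levi-Civita connection $d\tau(X,Y)=\langle\nabla_X T,Y\rangle-\langle\nabla_Y T,X\rangle$, so $d\tau=0$ forces $\langle\nabla_X T,Y\rangle=\langle\nabla_Y T,X\rangle$. The shape operator $S_\nu$ is self-adjoint as well. Writing $P:=\nabla T$ and $S:=S_\nu$, both symmetric, the displayed condition reads $\langle SPX,Y\rangle=\langle SPY,X\rangle$; using self-adjointness of $S$ and then of $P$ gives $\langle SPY,X\rangle=\langle PY,SX\rangle=\langle Y,PSX\rangle=\langle PSX,Y\rangle$, so the condition becomes $\langle(SP-PS)X,Y\rangle=0$ for all $X,Y$, i.e. $[S_\nu,\nabla T]=0$.

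I expect the main subtlety to be purely bookkeeping rather than conceptual: the content is the two symmetry observations (self-adjointness of $S_\nu$, which is automatic, and of $\nabla T$, which is equivalent to closedness of $\tau$) together with the elementary fact that the product of two self-adjoint operators is self-adjoint precisely when they commute. The one point deserving care is the reduction in the first step, where one must use two-sidedness so that $A$ is faithfully captured by the single scalar second fundamental form $\langle S_\nu\,\cdot\,,\,\cdot\,\rangle$; once this is in place, the equivalence is immediate and the hypothesis $d\tau=0$ is used exactly once, to symmetrize $\nabla T$.
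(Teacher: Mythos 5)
Your proof is correct, and it reaches the conclusion by a genuinely shorter route than the paper. Both arguments rest on the same two ingredients -- the Weingarten identity $\langle S_\nu U,V\rangle=\langle A(U,V),\nu\rangle$ (valid here because the normal bundle is trivial and one-dimensional, so $A$ is captured by its scalar component) and the fact that closedness of $\tau$ makes $\nabla T$ self-adjoint -- but they use them differently. The paper expands $\langle [S_\nu,\nabla T]\p_\mu,\p_\xi\rangle$ through the ambient connection $\onab$, invokes the Codazzi equation, and lets the resulting curvature terms $\langle\overline{R}(\p_\xi,\p_\mu)T,\nu\rangle$ cancel to arrive at $-\langle A(\p_\mu,\nabla_{\p_\xi}T)-A(\p_\xi,\nabla_{\p_\mu}T),\nu\rangle$. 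You instead observe that the condition is the statement that $S_\nu\circ\nabla T$ is self-adjoint, and since $(S_\nu P)^*=P^*S_\nu^*=PS_\nu$ for the two symmetric operators $S=S_\nu$, $P=\nabla T$, this holds precisely when they commute. This bypasses the Codazzi equation and the ambient geometry entirely; what the paper's computation buys in exchange is an explicit pointwise formula for $\langle[S_\nu,\nabla T]\p_\mu,\p_\xi\rangle$ in terms of $A$, though that formula is equivalent to what your one-line Weingarten reduction already gives. Your argument is complete as stated.
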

\begin{proof}
Recall that, if $\tau$ is a closed $1$-form, then
$
\langle \nabla_XT, Y\rangle =\langle \nabla_YT, X\rangle
$
for any $X,Y\in T_p\Sigma$.
A direct computation shows that 
\begin{align*}
\langle [S_\nu, \nabla T]\p_\mu, \p_\xi\rangle
&=\langle S_\nu(\nabla_{\p_\mu}T), \p_\xi\rangle-\langle \nabla_{S_\nu(\p_\mu)}T, \p_\xi\rangle\\
&=\langle  A(\p_\xi,\nabla_{\p_\mu}T),\nu\rangle-\langle \nabla_{\p_\xi}T, S_\nu(\p_\mu)\rangle\\
&=\langle \onab_{\p_\xi}\nabla_{\p_\mu}T,\nu\rangle+\langle \nabla_{\p_\xi}T, \onab_{\p_\mu}\nu\rangle\\
&=\langle \onab_{\p_\xi}\nabla_{\p_\mu}T,\nu\rangle-\langle \onab_{\p_\mu}\nabla_{\p_\xi}T, \nu\rangle\\
&=\langle \overline{R}(\p_\xi,\p_\mu)T,\nu\rangle-\Big\langle\onab_{\p_\xi}(A(\p_\mu,T))-\onab_{\p_\mu}(A(\p_\xi,T)),\nu\Big\rangle.
\end{align*}
Moreover, at the point $p$, the Codazzi equation shows that 
\begin{align*}
\Big\langle \onab_{\p_\xi}(A(\p_\mu,T))-\onab_{\p_\mu}(A(\p_\xi,T)),\nu\Big\rangle(p)
&=\Big\langle (\nabla^\perp_{\p_\xi}A)(\p_\mu,T)-(\nabla^\perp_{\p_\mu}A)(\p_\xi,T),\nu\Big\rangle\\
&\quad +\Big\langle A(\p_\mu, \nabla_{\p_\xi}T)-A(\p_\xi, \nabla_{\p_\mu}T),\nu\Big\rangle\\
&=\langle \overline{R}(\p_\xi,\p_\mu)T,\nu\rangle+\Big\langle A(\p_\mu, \nabla_{\p_\xi}T)-A(\p_\xi, \nabla_{\p_\mu}T),\nu\Big\rangle.
\end{align*}
Substituting this to the previous equation, we obtain
\[
\langle [S_\nu, \nabla T]\p_\mu, \p_\xi\rangle=-\Big\langle A(\p_\mu, \nabla_{\p_\xi}T)-A(\p_\xi, \nabla_{\p_\mu}T),\nu\Big\rangle
\]
for any $\mu,\xi=1,\ldots, n-1$.
This implies the conclusion.
\end{proof}

Now, we give a proof of Proposition \ref{prop:key4}.

\begin{proof}[Proof of Proposition \ref{prop:key4}]
  By Lemma \ref{lem:aeq}, $\mathcal{H}_1$ can be written as 
  \[\mathcal{H}_1=\{\omega\in \mathcal{H}\mid [S_{\nu}, \nabla\omega^\sharp]=0\}. \]
  Notice that, for any harmonic $1$-form $\omega$, $\nabla \omega^{\sharp}(X,Y)=\langle \nabla_X\omega^{\sharp}, Y\rangle$ is symmetric and hence $\nabla \omega^\sharp: T_p\Sigma\to T_p\Sigma$ is diagonalizable and has real eigenvalues. Therefore, if $\omega\in \mathcal{H}_{1}$,  by using Proposition \ref{prop:key5},  there exists a  geodesic ball $B_\rho(p)\subset \Sigma$ around some point $p\in \Sigma$ and  a smooth orthonormal frame $\{E_{\mu}\}_{\mu=1}^{n-1}$ on $B_{\rho}(p)$ which simultaneously diagonalizes $S_{\nu}$ and $\nabla{\omega^{\sharp}}$.  In particular, we have
  \begin{align}
    \nabla\omega^\sharp(E_\mu, E_\xi)=0 \quad \text{for} \quad \mu\neq \xi. \label{eq:diagonal}
  \end{align}
  Note that the proof of \cite[Proposition 5]{ACS2} essentially use the relation \eqref{eq:diagonal} and smoothness of principal vectors, both of them are derived by assuming that there is a point $p\in \Sigma$ where all the principal curvatures are distinct. Thanks to Lemma \ref{lem:aeq} and Proposition \ref{prop:key5}, we can remove the assumption of principal curvatures considered in \cite{ACS2}. The rest of our proof completely follows the argument in \cite{ACS2}. 
  
  For a harmonic 1-form $\omega$, $\nabla\omega^\sharp$ is trace-free, i.e., 
  \[\sum_{\mu=1}^{n-1}\nabla\omega^\sharp(E_\mu, E_\mu)=0. \]
  Combining this fact and \eqref{eq:diagonal}, $\nabla\omega^\sharp$ on $B_\rho(p)$ is completely determined by the functions 
  \[\nabla\omega^\sharp(E_1, E_1), \dots, \nabla\omega^\sharp(E_{n-2}, E_{n-2}). \] 
  Now, we consider $2n-3$ functions on $B_\rho(p)$ by 
  \begin{align*}
    \phi_{\omega,i}(q):=
    \begin{cases}
      \omega(E_i)(q) &\quad \text{for} \quad 1\leq i\leq n-1\\
      \nabla\omega^\sharp(E_{i-n+1}, E_{i-n+1})(q) &\quad \text{for} \quad n\leq i\leq 2n-3. 
    \end{cases}
  \end{align*} 
  Given any $q\in B_\rho(p)$, let $\gamma(t):[0, \tau]\to \Sigma^{n-1}$ be the unique geodesic connecting $p$ to $q$. Restricting the functions $\phi_{\omega,1}, \dots, \phi_{\omega, 2n-3}$ to the geodesic $\gamma(t)$, we obtain the functions 
  \[f_i(t):=\phi_{\omega,i}(\gamma(t)), \quad 1\leq i\leq 2n-3.\]
 Owing to the local smoothness of $\{E_{\mu}\}_{\mu=1}^{n-1}$, the same computation given in the proof of  \cite[Proposition 5]{ACS2} holds, and moreover, we see that $\{f_i\}$ solves a linear ODE system of normal form. In particular, the value along $\gamma$ is determined by the initial value
$(f_1(0), \dots, f_{2n-3}(0))=(\phi_{\omega, 1}(p), \dots, \phi_{\omega, 2n-3}(p))$, namely,
 the values $(\phi_{\omega,1}(q), \dots, \phi_{\omega, 2n-3}(q))$ $(q\in B_{\rho}(p))$ are uniquely determined by $(\phi_{\omega, 1}(p), \dots, \phi_{\omega, 2n-3}(p))$. 
  
  Let $\omega$ and $\omega'$ be two harmonic forms belonging to the subspace $\mathcal{H}_{1}\subset \mathcal{H}$.
If $\phi_{\omega,i}(p)=\phi_{\omega',i}(p)$ for any $i=1,\ldots, 2n-3$, then the above argument shows that $\omega(q)=\omega'(q)$ for any $q\in B_{\rho}(p)$, and by using the unique continuation theorem for harmonic forms \cite{AKS}, we obtain $\omega=\omega'$ on $\Sigma$.  Therefore  an element $\omega\in \mathcal{H}_{1}$ is  determined by $(\phi_{\omega, 1}(p), \dots, \phi_{\omega, 2n-3}(p))$, and this implies that the dimension of $\mathcal{H}_{1}$ is at most $2n-3$ as required.
\end{proof}

Finally, we give a proof of our main result (Theorem \ref{thm:main}).

\begin{proof}[Proof of Theorem \ref{thm:main}]
We first assume that $\Sigma^{n-1}$ is two-sided.  By proposition \ref{prop:key2}, we have ${\rm Tr}_{\wedge^{2}\fg}q_{\omega^{\sharp}}=0$ for any harmonic $1$-form $\omega$ on $\Sigma$. Since $\langle,\rangle_{\wedge^{2}\fg}$ is positive-definite on $\wedge^{2}\fg$, Proposition \ref{prop:ind}-(ii) shows an affine bound
${\rm index}(\Sigma)\geq C(b_{1}(\Sigma)-{\rm dim}\mathcal{H}_{0})$,
where we put $C=2/(d(d-1))$, $d={\rm dim}\fg$. Moreover, by Proposition \ref{prop:key4}, we have ${\rm dim}\mathcal{H}_{0}\leq {\rm dim}\mathcal{H}_{1}\leq 2n-3$ since $\mathcal{H}_{0}$ is a linear subspace of $\mathcal{H}_{1}$. Combining this, we obtain 
\begin{align}\label{eq:ab}
{\rm index}(\Sigma)\geq C(b_{1}(\Sigma)-D),
\end{align}
where we set $D=2n-3$.
If a real number $a$ satisfies $a\geq 1$, then we have $a\geq (a+c)/(1+c)$ for any $c\geq 0$. Therefore, if $\Sigma$ is unstable (i.e. ${\rm index}(\Sigma)\geq 1$), then \eqref{eq:ab} implies
\begin{align}\label{eq:ab2}
{\rm index}(\Sigma)\geq \frac{{\rm index}(\Sigma)+CD}{1+CD}\geq \frac{Cb_{1}(\Sigma)}{1+CD}=\frac{2}{d(d-1)+2(2n-3)}b_{1}(\Sigma).
\end{align}
This proves the required inequality.

Next, we consider the case when $\Sigma$ is one-sided. We set   $\widetilde{\Sigma}:=\{(p,\nu_{p})\mid p\in \Sigma, \textup{$\nu_{p}\in \nu_{p}\Sigma, |\nu_{p}|=1$}\}$. We endow $\widetilde{\Sigma}$ a manifold structure so that  a natural projection $\pi:\widetilde{\Sigma}\to \Sigma$ becomes a two-fold covering map. Then we obtain an immersion $\widetilde{f}:=f\circ \pi:\widetilde{\Sigma}\to M$, where $f: \Sigma\to M$ is the isometric immersion of $\Sigma$. We induce a Riemannian metric on $\widetilde{\Sigma}$  via the map $\widetilde{f}$ so that $\widetilde{f}: \widetilde{\Sigma}\to M$ becomes an isometric immersion. 

We put $\widetilde{\nu}_{(p,\nu_{p})}:=\nu_{p}$. Then, $\widetilde{\nu}$ defines a smooth normal vector field along $\widetilde{f}$, and in particular, the normal bundle $\nu\widetilde{\Sigma}$ of $\widetilde{f}$ is trivial.  
Note that, for the non-trivial deck transformation $\tau: \widetilde{\Sigma}\to \widetilde{\Sigma}$, we have $\widetilde{\nu}\circ \tau=-\widetilde{\nu}$ under natural identifications $(\widetilde{f}^{*}TM)_{\tau(\widetilde{p})}\simeq (\widetilde{f}^{*}TM)_{\widetilde{p}}\simeq ({f}^{*}TM)_{p}$ for $\widetilde{p}\in \widetilde{\Sigma}$ and $p=\pi(\widetilde{p})\in \Sigma$.

For any $V\in \Gamma(f^{*}TM)$, we define a natural lift $\widetilde{V}\in \Gamma(\widetilde{f}^{*}TM)$  by $\widetilde{V}(\widetilde{p}):=V(\pi(\widetilde{p}))=V(p)$. If $V$ is a normal vector field, we can associate a function $\varphi_{V}\in C^{\infty}(\widetilde{\Sigma})$ satisfying that  $\widetilde{V}=\varphi_{V}\cdot \widetilde{\nu}$. Then, since $\widetilde{\nu}\circ \tau=-\widetilde{\nu}$ and $\widetilde{V}\circ \tau=\widetilde{V}$, it holds that $\varphi_{V}\circ \tau=-\varphi_{V}$, i.e. $\varphi_{V}$ is an odd function with respect to the deck transformation.  Conversely, for any odd function $\varphi\in C^{\infty}(\widetilde{\Sigma})$, we obtain a well-defined smooth normal vector field $V$ along $f$ by $V(p)=\varphi(\widetilde{p})\cdot \widetilde{\nu}(\widetilde{p})$, where $\widetilde{p}\in \pi^{-1}(p)$.

Let $T$ be any tangent vector field along $f$ and $\widetilde{T}$ its natural lift. Since $\widetilde{T}$ is invariant under the deck transformation,  the function $\varphi_{\widetilde{T},w}=\langle \widetilde{\nu}\wedge \widetilde{T}, \Pi(w)\rangle$ is an odd function  for any $w\in \wedge^{2}\fg$, and it yields a normal vector field $V_{\widetilde{T},w}$ along $f$.  Thus, we obtain a map
\[
\Phi: \Gamma(T\Sigma)\times \wedge^{2}\fg\to \Gamma(\nu\Sigma),\quad \Phi(T,w)=V_{\widetilde{T},w}
\]
and a quadratic form 
\[
q_{T}(v,w):=Q(V_{\widetilde{T},v}, V_{\widetilde{T},w}).
\]
Note that, since $\pi: \widetilde{\Sigma}\to \Sigma$ is a two-fold covering (and $\pi$ is a Riemannian submersion), we have $Q_{\widetilde{\Sigma}}(\widetilde{V},\widetilde{V})=2Q(V, V)$, where $Q_{\widetilde{\Sigma}}$ is the index form of $\widetilde{\Sigma}$.

Now, we take a harmonic $1$-form $\omega$ on $\Sigma$. Then, $\widetilde{\omega}=\pi^{*}\omega$ is a harmonic $1$-form on $\widetilde{\Sigma}$. Moreover, $\widetilde{\omega}^{\sharp}$ coincides with the natural lift of $\omega^{\sharp}$. Therefore, by Proposition \ref{prop:key2},  we obtain ${\rm Tr}_{\wedge^{2}\fg} q_{\omega^{\sharp}}=\frac{1}{2}\cdot {\rm Tr}_{\wedge^{2}\fg}q_{\widetilde{\omega}^{\sharp}}=0$ for any harmonic $1$-form $\omega$ on $\Sigma$. Thus, by Proposition \ref{prop:ind}, we obtain an affine bound ${\rm index}(\Sigma)\geq C(b_{1}(\Sigma)-{\rm dim}\mathcal{H}_{0})$, where $\mathcal{H}_{0}=\{\omega\in \mathcal{H}\mid \mathcal{J}_{\Sigma}(\Phi_{\omega^{\sharp},v})=0\ \forall v\in \wedge^{2}\fg\}$. Since $ \mathcal{J}_{\Sigma}(\Phi_{\omega^{\sharp},v})=\mathcal{J}_{\widetilde{\Sigma}}(\varphi_{\widetilde{\omega}^{\sharp},v})$,  we see $\widetilde{\omega}\in \widetilde{\mathcal{H}}_{0}:=\{\omega\in \mathcal{H}_{\widetilde{\Sigma}}\mid\mathcal{J}_{\widetilde{\Sigma}}(\varphi_{\omega^{\sharp}, v})=0\,\forall v\in \wedge^{2}\fg \}$ if $\omega\in \mathcal{H}_{0}$, namely, the pull-back induces a linear map $\pi^{*}: \mathcal{H}_{0}\to \widetilde{\mathcal{H}}_{0}$. Since ${\rm Ker}\pi^{*}=\{0\}$, we obtain ${\rm dim}\mathcal{H}_{0}\leq {\rm dim}\widetilde{\mathcal{H}}_{0}\leq {\rm dim}\widetilde{\mathcal{H}}_{1}\leq 2n-3$ by Proposition \ref{prop:key4}, and hence, we obtain an affine bound \eqref{eq:ab}.  Therefore if $\Sigma$ is unstable, we obtain  \eqref{eq:ab2} as well as the case of two-sided hypersurface. This completes the proof. 
\end{proof}

\appendix

\section{Relation to previous methods}

\subsection{The case of $\R^{n+k}$}\label{A:Euc}

In Theorem \ref{thm:tr}, we derive a trace formula for a Riemannian manifold $M$ isometrically immersed into a semi-simple Riemannian symmetric space $N$. 
A corresponding trace formula in the case when $N=\R^{m}$ has been derived by Ambrozio-Carlotto-Sharp \cite[Proposition 2]{ACS}.  In this appendix, we remark that our argument given in Section \ref{sec:tr2} can be adapted to the case when $N=\R^{n+k}$, and their formula is recovered by a slight modification.

The identity component of the isometry group of $\R^{n+k}$ is given by a semi-direct product $G=SO(n+k)\ltimes \R^{n+k}$. Notice that $G$ is not a semi-simple Lie group (and hence, we cannot use the results given in the previous subsection directly). More precisely, $G$ is expressed by
\[
G=\Big{\{}
\begin{pmatrix}
A & b\\
0 & 1
\end{pmatrix}
\mid 
A\in SO(n+k),\ b\in \R^{n+k}\Big{\}}\subset GL(n+k+1,\R),
\]
and 
$G$ acts on $\R^{n+k}$ by 
\[
\begin{pmatrix}
A & b\\
0 & 1
\end{pmatrix}
\begin{pmatrix}
x\\
1
\end{pmatrix}
=
\begin{pmatrix}
Ax+b\\
1
\end{pmatrix},
\quad 
x\in \R^{n+k}.
\]
The Lie algebra of $\fg$ is given by
\[
\fg=\Big{\{}
\begin{pmatrix}
X & y\\
0 & 0
\end{pmatrix}
\mid 
X\in \mathfrak{o}(n+k),\ y\in \R^{n+k}\Big{\}}\subset \mathfrak{gl}(n+k+1,\R).
\]

Fix arbitrary $x\in \R^{n+k}$. Then the geodesic symmetry $s_{x}$ at $x$ is given by $s_{x}=\begin{pmatrix} -I_{n+k} & 2x\\ 0 & 1\end{pmatrix}\in G$. Thus the differential at $e\in G$ of the involution $\sigma_{x}$ is expressed by
\[
d\sigma_{x}(\begin{pmatrix}
X & y\\
0 & 0
\end{pmatrix}
)=s_{x}
\begin{pmatrix}
X & y\\
0 & 0
\end{pmatrix}
s_{x}=
\begin{pmatrix}
X & -y-X(2x)\\
0 & 0
\end{pmatrix}.
\]
Therefore, the eigendecomposition  $\fg=\fk_{x}\oplus \fn_{x}$ is given by
\[
\fk_{x}=\Big{\{}
\begin{pmatrix}
X & -Xx\\
0 & 0
\end{pmatrix}
\mid 
X\in \mathfrak{o}(n+k)\Big{\}},\quad 
\fn_{x}=\Big{\{}
\begin{pmatrix}
O & y\\
0 & 0
\end{pmatrix}
\mid 
y\in \R^{n+k}\Big{\}}.
\]
Notice that $\fn_{x}$ does not depend on the choice of $x\in N=\R^{n+k}$ and thus, we denote $\fn=\fn_{x}$.  
We define the standard inner product $\langle,\rangle$ on $\fn\simeq \R^{n+k}$, and extend it to $\wedge^{2}\fn$.

Suppose $M$ is isometrically immersed into $N=\R^{n+k}$ and let $\Sigma$ be a closed two-sided minimal hypersurface in $M$. Similar to Section \ref{sec:tr2}, we  fix a tangent vector field $T\in \Gamma(T\Sigma)$, and for each $v\in \wedge^{2}\fn$ we consider a smooth function 
\[
\widehat{\varphi}_{T, v}:=\langle \nu\wedge T, \Pi(v)\rangle,
\]
where $\nu$ is a unit normal vector field of $\Sigma$ in $\R^{n+k}$ and $\Pi(X\wedge Y)=X^{\dagger}\wedge Y^{\dagger}$ for $X,Y\in \fn$, and we linearly extend the map to whole $\wedge^{2}\fn$.
 It should be noted that, for $Y=\begin{pmatrix}
O & y\\
0 & 0
\end{pmatrix}\in \fn$, the Killing vector field generated by $Y$  is given by $Y^{\dagger}=y$, i.e. $Y^{\dagger}$ is regarded as a parallel vector field on $\R^{n+k}$.  Therefore, for an orthonormal basis $\{\theta_{i}\}_{i=1}^{n+k}$ of $\fn\simeq \R^{n+k}$, we may write
\[
\widehat{\varphi}_{T, \theta_{i}\wedge \theta_{j}}=\langle \nu\wedge T, \theta_{i}\wedge \theta_{j}\rangle.
\]
This recovers the test function considered in  \cite[Subsection 3.2]{ACS}.

Now, we define a quadratic form $\widehat{q}_{T}$ on $\wedge^{2}\fn$ by 
\begin{align*}
\widehat{q}_{T}(v,w):=Q(\widehat{\varphi}_{T,v}, \widehat{\varphi}_{T,w})
\end{align*}
and we consider the trace of $\widehat{q}_{T}$ over $\wedge^{2}\fn$. 
 We remark that we take the trace over $\wedge^{2}\fn$ (not over $\wedge^{2}\fg$. Compare with the semi-simple case \eqref{eq:trtr}). When $N=G/K_{p}$ is a semi-simple RSS,  $\fn_{p}$ depends on $p$ in general, and hence, ${\rm Tr}_{\wedge^{2}\fn_{p}} q_{T}$ is as well. This is a reason why we take the trace over $\wedge^{2}\fg$ when $N$ is semi-simple.

If $\Sigma$ is a (two-sided) closed minimal hypersurface in $M^{n}$, the computation given in Proposition \ref{prop:tr} is worked without any modification,  and if $T$ is the dual vector of a harmonic 1-form $\omega$, then we obtain 
\begin{align}\label{eq:acs}
{\rm Tr}_{\wedge^{2}\fn} \widehat{q}_{\omega^\sharp}
&=\int_{\Sigma}\sum_{\mu=1}^{n-1}\Big(|B(e_{\mu},\omega^{\sharp})|^{2}+|B(e_{\mu},\nu)|^{2}|\omega|^{2}\Big)
-\Big(\sum_{\mu=1}^{n-1}\langle R^{M}(\omega^{\sharp},e_{\mu})e_{\mu}, \omega^{\sharp}\rangle+{\rm Ric}^{M}(\nu,\nu)|\omega|^{2}\Big)
\end{align}
as $\overline{R}=0$ in \eqref{eq:tr1}. This recovers the trace formula given in \cite[Proposition 2]{ACS}. Note that the another trace formula given in \cite[Proposition 1]{ACS} is also recovered by a similar way. Moreover, by using Proposition \ref{prop:ind}-(i), we obtain the following.
\begin{theorem}[\cite{ACS}]
Suppose $M^{n}$ is isometrically immersed into $\R^{n+k}$ and let $\Sigma$ be a closed minimal hypersurface in $M$. If ${\rm Tr}_{\wedge^{2}\fn} \widehat{q}_{\omega^\sharp}<0$ for any non-trivial harmonic 1-form $\omega$ on $\Sigma$, then we have
\[
{\rm index}(\Sigma)\geq \frac{2}{(n+k)(n+k-1)}b_{1}(\Sigma).
\]
\end{theorem}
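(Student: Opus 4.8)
The plan is to deduce the statement as a direct specialization of the abstract index estimate in Proposition \ref{prop:ind}-(i), taking the finite-dimensional inner product space to be $\mathcal{G}=\wedge^{2}\fn$. Essentially all of the analytic work has already been carried out: the trace formula \eqref{eq:acs} computes ${\rm Tr}_{\wedge^{2}\fn}\widehat{q}_{\omega^{\sharp}}$, and Proposition \ref{prop:ind} is exactly the mechanism that turns a sign condition on such a trace into a lower bound for the index.

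First I would verify that the data fits the hypotheses of Lemma \ref{lem:key}, and hence of Proposition \ref{prop:ind}. Here $\wedge^{2}\fn$ carries the positive-definite inner product obtained by extending the standard inner product on $\fn\simeq\R^{n+k}$, and the bilinear map
\[
\Phi:\Gamma(T\Sigma)\times\wedge^{2}\fn\to\Gamma(\nu\Sigma),\qquad \Phi(T,v):=\widehat{\varphi}_{T,v}\cdot\nu,
\]
together with the associated quadratic form $q_{T}=\widehat{q}_{T}$, is precisely of the required shape. The hypothesis ${\rm Tr}_{\wedge^{2}\fn}\widehat{q}_{\omega^{\sharp}}<0$ for every non-trivial harmonic $1$-form $\omega$ is then exactly the strict-negativity condition in Proposition \ref{prop:ind}-(i). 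Applying that proposition gives
\[
{\rm index}(\Sigma)\geq\frac{1}{{\rm dim}(\wedge^{2}\fn)}\,b_{1}(\Sigma).
\]

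Finally I would carry out the dimension count. Since ${\rm dim}\,\fn=n+k$, we have ${\rm dim}(\wedge^{2}\fn)=\binom{n+k}{2}=\tfrac{(n+k)(n+k-1)}{2}$, and substituting this yields the asserted constant $\tfrac{2}{(n+k)(n+k-1)}$. There is no genuine obstacle at this stage: the argument is a mechanical application of the established framework, the only verification being the elementary dimension formula for $\wedge^{2}\fn$. The real difficulty lies upstream, in the Euclidean adaptation of Proposition \ref{prop:tr} that produces \eqref{eq:acs} and in the proof of Proposition \ref{prop:ind} itself.
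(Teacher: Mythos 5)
Your proposal is correct and is essentially identical to the paper's own derivation: the theorem is obtained by applying Proposition \ref{prop:ind}-(i) to the map $\Phi(T,v)=\widehat{\varphi}_{T,v}\cdot\nu$ with $\mathcal{G}=\wedge^{2}\fn$, and the constant comes from ${\rm dim}(\wedge^{2}\fn)=\binom{n+k}{2}$. No discrepancies to report.
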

See \cite{ACS, GMR} for several examples of $M$ satisfying that ${\rm Tr}_{\wedge^{2}\fn} \widehat{q}_{\omega^\sharp}<0$ for any harmonic $1$-form $\omega$ on any closed minimal hypersurface in $M$.

\subsection{The method of Mendes-Radeschi}\label{A:MR}
In \cite{MR}, Mendes-Radeschi generalizes the previous method by using the notion of virtual immersion. According to \cite{MR}, we briefly summarize their method,  and as a consequence, we show that they use the same test function given in the present paper.  

Let $(M,g)$ be a Riemannian manifold, and $V$ be a finite-dimensional real vector space equipped with an inner product $\langle,\rangle_{V}$.  The {\it virtual immersion} of $M$ is a $V$-valued $1$-form $\Omega: TM\to V$ satisfying  the following two-conditions:
\begin{enumerate}
\item $\langle \Omega_{p}(X), \Omega_{p}(Y)\rangle_{V}=g_{p}(X,Y)$ for any $p\in M$ and $X,Y\in T_{p}M$.
\item $\langle (d\Omega)_{p}(X,Y),\Omega_{p}(Z)\rangle_{V}=0$ for any $p\in M$ and $X,Y, Z\in T_{p}M$.
\end{enumerate}

If $M$ admits  a virtual immersion $\Omega$, we obtain an immersion of the tangent bundle $TM$ to the trivial bundle $M\times V$ by the map $(p,X)\mapsto (p, \Omega_{p}(X))$ where $X\in T_{p}M$. This immersion is isometric in the sense of (i), and moreover the above two conditions imply that the natural flat connection $D$ on $M\times V$ induces the Levi-Civita connection $\nabla^{M}$ on $TM\subset M\times V$ (See \cite{MR} for details). This notion generalizes the situation of isometric immersion of $M$ into the Euclidean space $V=\R^{n+k}$. Namely, if $F: M\to V=\R^{n+k}$ is an isometric immersion, then  $F^{*}TV$ is a trivial bundle over $M$ and we have a natural immersion $TM\to F^{*}TV\simeq M\times V$, and in this case, the virtual immersion is given by $dF: TM\to V=\R^{n+k}$.
For a virtual immersion $\Omega: TM\to V$, we can define the second fundamental form $\rm{I\hspace{-.01em}I}$ of $\Omega$ by using the flat connection $D$ similar to the usual second fundamental form. Moreover, the formulas of Weingarten, Gauss, Ricci and Codazzi (relative to $\rm{I\hspace{-.01em}I}$ and $D$) also hold for the virtual immersion $\Omega$ analogous to the isometric immersion into the Euclidean space \cite[Proposition 7]{MR}.  However, the second fundamental form $\rm{I\hspace{-.01em}I}$ is not symmetric in general. 

Let $\Sigma$ be a two-sided closed minimal hypersurface in $M$, and we suppose $M$ admits a virtual immersion $\Omega: TM\to V$. Then, Mendes-Radeschi considered the following test variation along $\Sigma$ (See \cite[eq.(6)]{MR}. Note that we change the sign);
\begin{align}\label{eq:MR}
\Phi: \mathcal{H}\times \wedge^{2}V\to \Gamma(\nu\Sigma),\quad 
\Phi_{\omega, \alpha}:=\langle \Omega(\nu)\wedge \Omega(\omega^{\sharp}),\alpha\rangle_{\wedge^{2}V}\cdot \nu,
\end{align}
where $\nu$ is a unit normal vector field along $\Sigma$, $\omega$ is a harmonic $1$-form on $\Sigma$ and $\alpha\in \wedge^{2}V$. Note that if $F:M\to V=\R^{n+k}$ is an isometric immersion and $\Omega=dF$, then the test variation coincides with the one considered in \cite{ACS} (see also Appendix \ref{A:Euc}). Then, by applying the tracing method for $\Phi$, they derived a trace formula in terms of the second fundamental form $\rm{I\hspace{-.01em}I}$ of $\Omega_{0}$ and the curvature tensor of $M$ (\cite[eq.(5) and (7)]{MR}).

Now, let us assume that $M$ is a compact Riemannian symmetric space. 
In \cite{MR}, Mendes-Radeschi showed that any compact RSS admits a natural virtual immersion described as follows. Let $M=G/K$ be a compact RSS, and denote the associated canonical decomposition by $\fg=\fk\oplus \fm$. We define an ${\rm Ad}(G)$-invariant inner product $\langle, \rangle_{\fg}$ on $\fg$. Since the isotropy subgroup $K$ acts on $\fm$ via the restriction of adjoint representation, we have a homogeneous fiber bundle $G\times_{K}\fm$ on $M=G/K$ associated to the principal bundle $G\to G/K$. More precisely, $G\times_{K}\fm:=G\times \fm/\sim$, where the equivalent relation is defined by the $K$-action $k\cdot (g, X):=(gk^{-1}, {\rm Ad}(k)X)$. $G\times_{K}\fm$ is identified with the tangent bundle $TM$ by the map 
\[
G\times_{K}\fm\to TM,\quad [g, X]\mapsto dg_{o}(X),
\]
where $o=[e]\in G/K$ is the origin of $G/K$. Under this identification, we define a $\fg$-valued $1$-form on $M$ by 
\[
\Omega_{0}: G\times_{K}\fm\to \fg,\quad [g,X]\mapsto {\rm Ad}(g)X.
\]
Then, it turns out that $\Omega_{0}$ is a virtual immersion of $M$ (\cite[Lemma 11]{MR}). Moreover, the second fundamental form $\rm{I\hspace{-.01em}I}$ of $\Omega_{0}$ is skew-symmetric, and this property actually characterizes the symmetric space (\cite[Theorem B]{MR}).  

Our claim here is the following.
\begin{lemma}
Suppose $M$ is semi-simple, and $\langle,\rangle_{\fg}$ is defined by the negative-Killing form.
By taking the specific virtual immersion $\Omega_{0}$, the test function given in \eqref{eq:MR} is written as
\[
\langle \Omega_{0}(\nu)\wedge \Omega_{0}(\omega^{\sharp}), \theta\wedge \xi\rangle_{\wedge^{2}\fg}=\langle \nu\wedge \omega^{\sharp}, \theta^{\dagger}\wedge\xi^{\dagger}\rangle
\]
for any $\theta, \xi\in \fg$.

\end{lemma}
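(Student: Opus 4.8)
The plan is to reduce the stated degree-two identity to the single pointwise identity
\begin{align}\label{eq:pointwiseMR}
\langle V, \theta^{\dagger}(p)\rangle = \langle \Omega_{0}(V), \theta\rangle_{\fg},
\end{align}
asserted for every $p\in M$, every $V\in T_{p}M$ and every $\theta\in \fg$. Once \eqref{eq:pointwiseMR} is in hand the lemma follows purely formally: expanding both wedge inner products by bilinearity and applying \eqref{eq:pointwiseMR} to $V=\nu$ and to $V=\omega^{\sharp}$ gives
\[
\langle \Omega_{0}(\nu)\wedge \Omega_{0}(\omega^{\sharp}), \theta\wedge\xi\rangle_{\wedge^{2}\fg}
=\langle \nu,\theta^{\dagger}\rangle\langle \omega^{\sharp},\xi^{\dagger}\rangle-\langle \nu,\xi^{\dagger}\rangle\langle \omega^{\sharp},\theta^{\dagger}\rangle
=\langle \nu\wedge\omega^{\sharp}, \theta^{\dagger}\wedge\xi^{\dagger}\rangle,
\]
so all the content is concentrated in \eqref{eq:pointwiseMR}.

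First I would prove \eqref{eq:pointwiseMR} at the origin $o=eK$, where $\fm=\fn_{o}$ and $\Omega_{0}$ is simplest. Decomposing $\theta=\theta_{\fk}+\theta_{\fm}$ along $\fg=\fk\oplus\fm$, Lemma \ref{lem:key1} kills the $\fk$-part, so $\theta^{\dagger}(o)=\theta_{\fm}^{\dagger}(o)$; taking $V=Y^{\dagger}(o)$ with $Y\in\fm$, for which $\Omega_{0}(V)=Y$ by the very definition of $\Omega_{0}$, the isometry $(\fn_{o},\langle,\rangle_{\fg})\simeq (T_{o}M,\langle,\rangle)$ together with the orthogonality $\fk\perp\fm$ from Lemma \ref{lem:key2} yields $\langle Y^{\dagger}(o),\theta^{\dagger}(o)\rangle=\langle Y,\theta_{\fm}\rangle_{\fg}=\langle Y,\theta\rangle_{\fg}$, which is \eqref{eq:pointwiseMR} at $o$.

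Next I would propagate this to an arbitrary $p=g_{0}o$ by homogeneity. Writing $V=dL_{g_{0}}(Y^{\dagger}(o))$ with $Y\in\fm$, so that $\Omega_{0}(V)=\mathrm{Ad}(g_{0})Y$, I would differentiate the conjugation relation $L_{g}\exp(t\theta)=\exp(t\,\mathrm{Ad}(g)\theta)L_{g}$ to obtain the equivariance $dL_{g}(\theta^{\dagger}(q))=(\mathrm{Ad}(g)\theta)^{\dagger}(gq)$, hence $dL_{g_{0}}^{-1}(\theta^{\dagger}(p))=(\mathrm{Ad}(g_{0}^{-1})\theta)^{\dagger}(o)$. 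Since each $L_{g}$ is an isometry of $M$, this gives
\[
\langle V,\theta^{\dagger}(p)\rangle
=\langle Y^{\dagger}(o),(\mathrm{Ad}(g_{0}^{-1})\theta)^{\dagger}(o)\rangle
=\langle Y,\mathrm{Ad}(g_{0}^{-1})\theta\rangle_{\fg}
=\langle \mathrm{Ad}(g_{0})Y,\theta\rangle_{\fg}
=\langle \Omega_{0}(V),\theta\rangle_{\fg},
\]
applying the origin case in the middle and $\mathrm{Ad}$-invariance of the negative Killing form at the end; this completes \eqref{eq:pointwiseMR}.

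I expect the main difficulty to be organizational rather than analytic. One must carefully distinguish the pointwise decomposition $\fg=\fk_{p}\oplus\fn_{p}$ used throughout Sections \ref{sec:tr1}--\ref{sec:tr2} from the fixed decomposition at the origin that defines $\Omega_{0}$, and keep the two identifications $\fm\simeq T_{o}M$ (via $X\mapsto X^{\dagger}(o)$) and $\fn_{p}\simeq T_{p}M$ compatible under $dL_{g_{0}}$. The single place where the hypotheses bite is the final $\mathrm{Ad}$-invariance step, which is precisely where the choice of $\langle,\rangle_{\fg}$ as the negative Killing form—rather than the point-dependent inner product of the non-compact case—is indispensable.
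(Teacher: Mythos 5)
Your proposal is correct, and it is essentially the paper's own argument: both reduce the lemma to the pointwise identity $\langle \Omega_{0}(V),\theta\rangle_{\fg}=\langle V,\theta^{\dagger}\rangle$ and establish it from the same three ingredients — the identification $\theta^{\dagger}_{[g]}=[g,({\rm Ad}(g^{-1})\theta)_{\fm}]$ (which is exactly your equivariance step pulled back to the origin), the orthogonality $\fk\perp\fm$ for the Killing form (Lemma \ref{lem:key2}), and ${\rm Ad}$-invariance of $\langle,\rangle_{\fg}$. The only cosmetic difference is that the paper invokes the virtual-immersion isometry property $\langle\Omega_{0}(Z),\Omega_{0}(\theta^{\dagger})\rangle_{\fg}=\langle Z,\theta^{\dagger}\rangle$ and shows the leftover $\fk$-term vanishes, whereas you verify the identity at $o$ and transport it by $dL_{g_{0}}$; the content is the same.
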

\begin{proof}
As mentioned in \cite[Remark 13]{MR}, under the identification $TM\simeq G\times_{K}\fm$, the Killing vector field $\theta^{\dagger}_{[g]}$ corresponds to $[g, ({\rm Ad}(g^{-1})\theta)_{\fm}]$ for any $g\in G$ and $\theta\in \fg$, where $(\cdot)_{\fm}$ means the orthogonal projection to $\fm$. Indeed,  we see
\begin{align*}
dg^{-1}_{[g]}(\theta^{\dagger})=\frac{d}{dt} g^{-1}{\rm exp}(t\theta) g\cdot o\Big{|}_{t=0}=\frac{d}{dt} {\rm exp}(t{\rm Ad}(g^{-1})\theta)\cdot o\Big{|}_{t=0}=({\rm Ad}(g^{-1})\theta)_{\fm},
\end{align*}
where we used the identification $\fm\simeq T_{o}M$. This shows $\theta^{\dagger}_{[g]}=[g, ({\rm Ad}(g^{-1})\theta)_{\fm}]$ under the identification $TM\simeq G\times_{K}\fm$.  Thus, by definition of $\Omega_{0}$, we have 
\begin{align}\label{eq:orel}
\Omega_{0}(\theta^{\dagger}_{[g]})={\rm Ad}(g)({\rm Ad}(g^{-1})\theta)_{\fm}=\theta-{\rm Ad}(g)({\rm Ad}(g^{-1})\theta)_{\fk}.
\end{align}

We take arbitrary vector field $Z\in \Gamma(TM)$. Then, by using \eqref{eq:orel}, we have
\begin{align*}
\langle \Omega_{0}(Z), \theta\rangle_{\fg}&=\langle \Omega_{0}(Z), \Omega_{0}(\theta^{\dagger})\rangle_{\fg}-\langle \Omega_{0}(Z), {\rm Ad}(g)({\rm Ad}(g^{-1})\theta)_{\fk}\rangle_{\fg}\\
&=\langle Z, \theta^{\dagger}\rangle-\langle \Omega_{0}(Z), {\rm Ad}(g)({\rm Ad}(g^{-1})\theta)_{\fk}\rangle_{\fg},
\end{align*}
where we used the condition (i) in the definition of virtual immersion.
For each point $[g]\in M$, there exists a vector $X_{Z}\in \fg$ satisfying that $Z_{[g]}=(X_{Z}^{\dagger})_{[g]}$. Then, by \eqref{eq:orel}, we see
\begin{align*}
\langle \Omega_{0}(Z), {\rm Ad}(g)({\rm Ad}(g^{-1})\theta)_{\fk}\rangle_{\fg}
&=\langle {\rm Ad}(g)({\rm Ad}(g)^{-1}X_{Z})_{\fm}, {\rm Ad}(g)({\rm Ad}(g^{-1})\theta)_{\fk}\rangle_{\fg}\\
&=\langle ({\rm Ad}(g)^{-1}X_{Z})_{\fm}, ({\rm Ad}(g^{-1})\theta)_{\fk}\rangle_{\fg}=0
\end{align*}
since $\langle,\rangle_{\fg}$ is ${\rm Ad}(G)$-invariant and the canonical decomposition $\fg=\fk\oplus \fm$ is orthogonal with respect to the (negative) Killing form (see Lemma \ref{lem:key2}). Therefore, we obtain $\langle \Omega_{0}(Z), \theta\rangle_{\fg}=\langle Z,\theta^{\dagger}\rangle$ for any $Z\in \Gamma(TM)$, and this implies the desired conclusion.
\end{proof}

Hence, the test function considered in \cite{MR} coincides with our test function given in Section \ref{sec:tr2} in the case when $M$ is itself a compact semi-simple RSS. Note that, however, our setting considered in Section \ref{sec:tr2} is more general, and the expression of the trace formula is different from \cite{MR}.

\section{Local smoothness of simultaneous eigenvectors} \label{A:smooth}
In this  Appendix, we prove the following technical result used in the proof of Proposition \ref{prop:key4}. 

\begin{proposition}\label{prop:key5}
Let $(\Sigma^{m},g)$ be a smooth Riemannian manifold, and  $\alpha,\beta$ be smooth symmetric $(0,2)$-tensor fields on $\Sigma$. Suppose furthermore the $(1,1)$-tensor fields $A:=g^{-1}\alpha$ and $B:=g^{-1}\beta$ commute with each other, i.e. $[A,B]=0$ on $\Sigma$. Then, there exits an open subset $U\subset \Sigma$ and a smooth orthonormal frame $\{E_{\mu}\}_{\mu=1}^{m}$ on $U$ such that $A$ and $B$ are simultaneously diagonalized by $\{E_{\mu}\}_{\mu=1}^{m}$ at every point $p\in U$.
\end{proposition}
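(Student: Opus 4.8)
The plan is to reduce the simultaneous diagonalization of the commuting pair $(A,B)$ to the spectral analysis of a \emph{single} well-chosen linear combination $C=c_1A+c_2B$, and then to use a global maximality property to force both the local constancy of the multiplicities of $C$ (which yields smoothness) and the scalar action of $A$ and $B$ on each eigenspace of $C$. For a smooth field $S$ of $g$-self-adjoint endomorphisms, write $N_S(p)$ for the number of distinct (real) eigenvalues of $S(p)$. Two elementary facts about a single such field are the only analytic input: (a) the map $p\mapsto N_S(p)$ is lower semicontinuous, since the eigenvalues vary continuously and separated clusters cannot merge under a small perturbation; and (b) on any open set where $N_S$ is \emph{constant}, the eigenvalues are smooth functions and the associated eigenspaces form smooth, mutually $g$-orthogonal subbundles, the spectral projections being given by a Riesz contour integral of the resolvent.

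First I would set $J^{*}:=\max\{\,N_{c_1A+c_2B}(p)\ :\ p\in\Sigma,\ (c_1,c_2)\in\R^{2}\,\}$, which is attained because this quantity is a positive integer bounded above by $m=\dim\Sigma$. Fix a maximizer $(p_0,c_1^{0},c_2^{0})$ and put $C:=c_1^{0}A+c_2^{0}B$, so $N_C(p_0)=J^{*}$. By fact (a) we have $N_C\geq J^{*}$ near $p_0$, while $N_C\leq J^{*}$ everywhere by maximality; hence $N_C\equiv J^{*}$ on some open set $U\ni p_0$. By fact (b), $C$ then has smooth eigenbundles $W_1,\dots,W_{J^{*}}$ on $U$, mutually $g$-orthogonal with $TU=\bigoplus_j W_j$.

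The crucial step is to show that $A$ and $B$ act as scalars on each $W_j$ throughout $U$; this is where $[A,B]=0$ enters. Since $A$ and $B$ commute with $C$, every $W_j$ is invariant under both, so $A|_{W_j}$ and $B|_{W_j}$ are again $g$-self-adjoint. For small $\varepsilon\neq 0$ the operator $C+\varepsilon A=(c_1^{0}+\varepsilon)A+c_2^{0}B$ is an admissible combination, and at each point $q$ its distinct eigenvalues are obtained by perturbing those of $C(q)$ inside each $C$-eigenspace, giving $N_{C+\varepsilon A}(q)=\sum_{l}N_{A|_{W'_l}}(q)$, where the $W'_l$ are the $C(q)$-eigenspaces. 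By maximality this sum is $\leq J^{*}$. At a point of $U$, where there are exactly $J^{*}$ eigenspaces, the sum is also $\geq J^{*}$, and equality forces $N_{A|_{W_j}}=1$ for every $j$; thus $A$ is scalar on each $W_j$, and the same argument with $C+\varepsilon B$ shows $B$ is scalar on each $W_j$. Consequently the $W_j$ are exactly the joint eigenbundles of $(A,B)$.

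To finish, I would shrink $U$ so that each $W_j$ is trivial, apply Gram--Schmidt to a smooth local frame of $W_j$ to obtain a smooth $g$-orthonormal frame of $W_j$, and concatenate these over $j$ to produce a smooth orthonormal frame $\{E_{\mu}\}_{\mu=1}^{m}$ of $T\Sigma$ on $U$. Since each $E_{\mu}$ lies in some $W_j$, on which both $A$ and $B$ are scalar, the frame simultaneously diagonalizes $A$ and $B$ at every point of $U$, as required. I expect the main obstacle to be the third step: producing a single operator whose eigenspaces are genuinely the \emph{common} eigenspaces of $A$ and $B$, rather than merely invariant subspaces. The maximality of $J^{*}$ is precisely what rules out the degenerate possibility that some $W_j$ secretly splits under $A$ or $B$, while the constant-multiplicity smoothness in fact (b) is what upgrades the pointwise decomposition to a smooth one.
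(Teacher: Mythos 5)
Your proof is correct, and it takes a genuinely different route from the paper's. The paper proceeds iteratively: it first applies Singley's smoothness theorem to $A$ to get an open set where $A$ is smoothly diagonalized with constant multiplicities, observes that $[A,B]=0$ forces $B$ to be block-diagonal in that frame, and then diagonalizes each block of $B$ by applying Singley's theorem again to auxiliary operators $\widetilde{B}_{i}$ (the given block padded with identity blocks), carefully assembling the change-of-basis matrices $Q_{1},\dots,Q_{r}$ on a shrinking chain of open sets. Your argument replaces this iteration by a single generic linear combination $C=c_{1}A+c_{2}B$ chosen to maximize the number of distinct eigenvalues over all points and all coefficients; lower semicontinuity pins the count down to a constant near the maximizer, and the perturbation $C+\varepsilon A$ together with maximality forces $A$ (and likewise $B$) to act as a scalar on each eigenbundle of $C$, so one application of the constant-multiplicity smoothness fact finishes the proof. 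What your approach buys is the elimination of the bookkeeping with the matrices $Q_i$ and the reduction of the analytic input to the easy, constant-multiplicity case of Singley's theorem (provable directly via Riesz spectral projections $P_{j}=\frac{1}{2\pi i}\oint_{\Gamma_{j}}(z-C)^{-1}\,dz$), whereas the paper invokes the full open-dense-set statement; the price is the extra algebraic lemma that maximality of the eigenvalue count rules out a hidden splitting of the $W_{j}$ under $A$ or $B$. One small point worth making explicit: the identity $N_{C+\varepsilon A}(q)=\sum_{l}N_{A|_{W'_{l}}}(q)$ holds only for all sufficiently small $\varepsilon\neq 0$ avoiding the finitely many values at which clusters from distinct $C$-eigenvalues collide, but since the conclusion $N_{A|_{W_{j}}}(q)=1$ is pointwise, a point-dependent choice of $\varepsilon$ suffices and the argument goes through.
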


The proof relies on the result by Singley \cite{Sing}.  We briefly summarize his result before the proof. 

Let $G$ and $G'$ be smooth symmetric $(0, 2)$-tensor fields on a smooth $m$-dimensional manifold $\Sigma$. For each $p\in \Sigma$, $G$ defines a linear map $T_p\Sigma\to T_p^*\Sigma$, $v\mapsto G(v, \cdot)$ 
which is also denoted by $G$. We define the linear map $G':T_p\Sigma\to T^*_p\Sigma$ as well. If $G$ is positive definite, then $G$ is invertible, and we obtain a linear map $(G^{-1}G')(p):T_p\Sigma\to T_p\Sigma$. For example, if $\Sigma^{m}$ is a hypersurface  in a Riemannian manifold $(M^{m+1}, g)$, we may choose the symmetric $(0,2)$-tensors as the induced metric $g$ and $\widetilde{A}(X,Y):=\langle A(X,Y),\nu\rangle$, where $A$ is the second fundamental form of $\Sigma^{m}$. In this case, $G^{-1}G'=g^{-1}\widetilde{A}$ coincides with the shape operator $S_{\nu}$ of $\Sigma$. Since both $G$ and $G'$ are symmetric and $G$ is invertible,  we see that $G^{-1}G'$ is diagonalizable with real eigenvalues $\{\kappa_{\mu}\}_{\mu=1}^{m}$ and a corresponding orthonormal eigenbasis $\{e_\mu\}_{\mu=1}^{m}$ of $T_{p}\Sigma$. Note that, in general, there is no guarantee that the eigenvalues and eigenvectors are smooth on whole of $\Sigma$. However, it turns out to be true on an open dense subset on $\Sigma$:

\begin{lemma}[Theorem 1, 2 and 3 in \cite{Sing}]\label{lem:Sing}
  After the re-ordering of $\{\kappa_\mu\}_{\mu=1}^{m}$ by $\kappa_1(p)\leq \dots \leq \kappa_{m}(p)$, we have the following: 
  \begin{itemize}
    \item[(1)] $\kappa_{\mu}$ is continuous on whole of $\Sigma$ for any $\mu=1,\ldots, m$.
    \item[(2)] There is an open dense subset $U\subset \Sigma$ on which $\kappa_\mu$ is $C^\infty$ for any $\mu=1,\ldots, m$.
    \item[(3)] For any point $p\in U$, there is a neighborhood $U_p$ of $p$ and a $C^\infty$ orthonormal frame $\{E_\mu\}_{\mu=1}^{m}$ on $U_p$ which diagonalizes $G^{-1}G'(q)$ for each $q\in U_p$. Moreover, the multiplicities of eigenvalues are constant on $U_{p}$.
  \end{itemize}
\end{lemma}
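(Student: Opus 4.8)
The plan is to treat $S:=G^{-1}G'$ as a smooth field of endomorphisms of $T\Sigma$ that is self-adjoint with respect to the smooth, positive-definite form $G$, and to run the standard analytic perturbation theory for self-adjoint operators, localised to the open set where the number of distinct eigenvalues is locally constant. First I would pass to a convenient local model: since $G$ is smooth and positive-definite, Gram--Schmidt applied to a coordinate frame yields a smooth local $G$-orthonormal frame, in which $S$ is represented by a matrix-valued function $\widetilde S(p)$ with smooth entries and $\widetilde S(p)=\widetilde S(p)^{T}$. The generalised eigenvalues $\kappa_\mu(p)$ of the pencil $(G',G)$ are exactly the eigenvalues of $\widetilde S(p)$, hence real. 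Claim (1) --- continuity of the ordered eigenvalues $\kappa_1\le\dots\le\kappa_m$ on all of $\Sigma$ --- then follows from the classical fact that the ordered roots of the characteristic polynomial depend continuously on its coefficients, or equivalently from the Courant--Fischer min--max characterisation of $\kappa_\mu(p)$ through the Rayleigh quotient $G'_p(v,v)/G_p(v,v)$.

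Next I would isolate the regular set. Let $N(p)$ be the number of \emph{distinct} eigenvalues of $\widetilde S(p)$. Eigenvalues that are distinct at a point stay distinct nearby (they are separated by a positive gap and the $\kappa_\mu$ are continuous), so $N(q)\ge N(p)$ for $q$ near $p$; thus $N$ is lower semicontinuous and $\{N\ge k\}$ is open for each $k$. Setting
\[
U:=\bigcup_{k}{\rm int}\{p\in\Sigma : N(p)=k\},
\]
$U$ is open, and it is dense: on any open set $W$ choose $p_{0}\in W$ at which $N$ attains its maximum $k_{0}$ over $W$; lower semicontinuity gives a neighbourhood of $p_0$ on which $N\ge k_0$, hence $N\equiv k_0$ there, so that neighbourhood lies in $U$. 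On each connected component of $U$ the integer $N$ is therefore constant.

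On $U$ I would establish (2) and the remaining part of (3) via holomorphic functional calculus. Fix $p_0\in U$ with $N\equiv k$ near $p_0$, and let $\lambda_1(p_0)<\dots<\lambda_k(p_0)$ be the distinct eigenvalues. Choose fixed disjoint circles $\Gamma_1,\dots,\Gamma_k\subset\C$, each enclosing one $\lambda_j(p_0)$; by continuity and the local constancy of $N$ there is a neighbourhood $U_{p_0}$ on which each $\Gamma_j$ still encloses exactly the repeated eigenvalue near $\lambda_j(p_0)$ and avoids ${\rm spec}\,\widetilde S(p)$. The spectral projections
\[
P_j(p):=\frac{1}{2\pi i}\oint_{\Gamma_j}\big(zI-\widetilde S(p)\big)^{-1}\,dz
\]
are then smooth in $p$ (the resolvent is smooth where $z\notin{\rm spec}$), are $G$-orthogonal projections since $\widetilde S$ is symmetric, and have locally constant rank $m_j$ --- which is precisely the assertion that the multiplicities are constant on $U_{p_0}$. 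The distinct eigenvalues are recovered smoothly by $\lambda_j(p)=m_j^{-1}{\rm tr}\big(\widetilde S(p)P_j(p)\big)$; since the multiplicity pattern is locally constant, each ordered eigenvalue $\kappa_\mu$ coincides near $p_0$ with one of these smooth $\lambda_j$, giving the $C^\infty$-ness asserted in (2).

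Finally, for the smooth diagonalising frame in (3), each $P_j$ is a smooth projection of constant rank $m_j$, so ${\rm Im}\,P_j$ is a smooth $G$-orthogonal subbundle of $T\Sigma$ over $U_{p_0}$; shrinking $U_{p_0}$ to a contractible neighbourhood, I choose a smooth local frame of each subbundle, $G$-orthonormalise it by Gram--Schmidt, and concatenate over $j$ (distinct eigenspaces are automatically $G$-orthogonal). This produces a smooth $G$-orthonormal frame $\{E_\mu\}$ with $\widetilde S(q)E_\mu(q)=\kappa_\mu(q)E_\mu(q)$ at every $q\in U_{p_0}$, proving (3). The main obstacle --- and essentially the entire content of the statement --- is the passage from continuity to smoothness: pinning down the regular open dense set $U$ and justifying the smooth dependence of the spectral data on it. Smoothness genuinely fails at eigenvalue crossings, which form the complement of $U$, so the proof hinges on the lower-semicontinuity argument that simultaneously produces a dense set carrying locally uniform spectral gaps and, via the fixed-contour resolvent integral, upgrades continuity to $C^\infty$ exactly there; establishing density of $U$ and the locally uniform gap are the delicate points.
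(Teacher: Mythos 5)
Your proof is correct, but be aware that the paper offers no proof of Lemma \ref{lem:Sing} to compare against: the lemma is imported verbatim from Singley \cite{Sing} (Theorems 1, 2 and 3 there), and the paper's only gloss is the remark that Singley's open dense set $U$ has an explicit description in terms of multiplicities of eigenvalues. What you have written is therefore a self-contained replacement for the citation, along standard analytic-perturbation-theory lines: Courant--Fischer (or continuity of ordered roots) for (1); lower semicontinuity of the number $N(p)$ of distinct eigenvalues, so that $U=\bigcup_{k}\mathrm{int}\{N=k\}$ is open and dense --- your maximum-point argument works, with the small implicit step that the neighbourhood of $p_{0}$ must be intersected with $W$ before concluding $N\equiv k_{0}$ there; then, on $U$, Riesz projections over fixed contours with a locally uniform spectral gap, which are smooth, real symmetric, and of locally constant rank, and which deliver at once the constant multiplicities, the smooth eigenvalues $\lambda_{j}=m_{j}^{-1}\operatorname{tr}(\widetilde{S}P_{j})$, and (via the constant-rank subbundles $\operatorname{Im}P_{j}$ plus Gram--Schmidt) the smooth orthonormal diagonalizing frame of (3). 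Two things your route buys: your $U$, the locus where $N$ is locally constant, essentially coincides with Singley's explicitly described set, so the paper's remark is recovered rather than bypassed; and the argument applies verbatim to any smooth family of $G$-self-adjoint endomorphisms, which is the generality in which Proposition \ref{prop:key5} actually invokes the lemma, whereas Singley's original, more classical treatment is phrased for shape operators and avoids resolvent integrals. The one step you should spell out is why each $\Gamma_{j}$ encloses a single repeated eigenvalue on $U_{p_{0}}$: by continuity the $m$ eigenvalues split into $k$ nonempty groups near $\lambda_{1}(p_{0})<\dots<\lambda_{k}(p_{0})$, the groups cannot merge because they are separated by the gaps, hence $N(p)\geq k$ with equality if and only if every group collapses to a single value; the hypothesis $N\equiv k$ near $p_{0}$ is used exactly here, and it is also what makes the multiplicities locally constant in (3).
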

\begin{remark}
{\rm  In \cite{Sing}, the open dense subset $U\subset \Sigma$ is explicitly given by considering the multiplicity of the eigenvalues of $G^{-1}G'$. However we do not need the explicit form of $U$ in this paper. }
\end{remark}

By using this lemma, we shall give a proof of Proposition \ref{prop:key5}.

\begin{proof}[Proof of Proposition \ref{prop:key5}]
By Lemma \ref{lem:Sing}, there exists a point $p_{0}\in \Sigma$ and an open neighborhood $U_{p_{0}}$ of $p_{0}$ such that $A=g^{-1}\alpha$ is diagonalized by a smooth orthonormal frame $\{e_{\mu}\}_{\mu=1}^{m}$ on $U_{p_{0}}$. Moreover, we may assume the multiplicities of eigenvalues of $A$ are constant on $U_{p_{0}}$ and  $A$ is expressed by the diagonal matrix  $A={\rm diag}(\lambda_{1}I_{m_{1}},\cdots, \lambda_{r}I_{m_{r}})$   on $U_{p_{0}}$ with respect to the local frame $\{e_{\mu}\}_{\mu=1}^{m}$, 
where $\lambda_{1}<\lambda_{2}<\cdots<\lambda_{r}$ are the distinct eigenvalues of $A$,  $m_{i}$ is the multiplicity of $\lambda_{i}$ and $I_{m_{i}}$ is the identity matrix of order $m_{i}$.

 Since $[A,B]=0$, each eigenspace of $A$ is invariant by the left multiplication of $B$, and this implies that $B$ is expressed by 
 the block matrix $B={\rm diag}(B_{1},\ldots, B_{r})$   on $U_{p_{0}}$ with respect to the local frame $\{e_{\mu}\}_{\mu=1}^{m}$, where $B_{i}$ is some square matrix of order $m_{i}$. Note that each $B_{i}$ is a symmetric matrix since so is $B$.
  We consider a  symmetric matrix defined by $\widetilde{B}_{1}:={\rm diag}(B_{1}, I_{m_{2}},\ldots, I_{m_{r}})$ which represents some $(1,1)$-tensor field $g^{-1}\widetilde{\beta}_{1}'$ on $U_{p_{0}}$, where we regard $U_{p_{0}}$ as a Riemannian manifold by the induced metric.  
  By Lemma \ref{lem:Sing}, there exists a point $p_{1}\in U_{p_{0}}$ such that there is an open neighborhood $U_{p_{1}}\subset U_{p_{0}}$ of $p_{1}$ and a smooth orthonormal frame $\{e_{\mu}^{(1)}\}_{\mu=1}^{m}$ which diagonalizes $\widetilde{B}_{1}$ and the multiplicities of eigenvalues of $\widetilde{B}_{1}$ are constant on $U_{p_{1}}$.  Since $\widetilde{B}_{1}$ has eigenvalue $1$, we denote the $1$-eigenspace of $\widetilde{B}_{1}$ by $V^{(1)}$ and set $k_{1}:={\rm dim}V^{(1)}$ which is constant on $U_{p_{1}}$. Note that $k_{1}\geq m_{2}+\cdots +m_{r}=m-m_{1}$. 
  
  By taking a re-ordering if necessary, we may assume that $V^{(1)}$ is spanned by $\{e_{i}^{(1)}\}_{i=m-k_{1}+1}^{m}$.
Since $e_{l}\in V^{(1)}$ for any $l>m_{1}$,  we see that $W^{(1)}:={\rm span}\{e_{m_{1}+1},\ldots, e_{m}\}$ is an $(m-m_{1})$-dimensional linear  subspace of $V^{(1)}$. If $V^{(1)}\neq W^{(1)}$, we can find $k_{1}-(m-m_{1})$ vectors $e^{(1)}_{i_{1}},\ldots, e^{(1)}_{i_{k_{1}-(m-m_{1})}}$ in $\{e_{i}^{(1)}\}_{i=m-k_{1}+1}^{m}$ so that $\{e_{i_{1}}^{(1)},\ldots, e_{i_{k_{1}-(m-m_{1})}}^{(1)}, e_{m_{1}+1},\dots, e_{m}\}$ becomes a basis of $V^{(1)}$. By the Gram--Schmidt orthonormalization, we obtain an orthonormal basis  of $V^{(1)}$ of the form $\{e_{i_{1}}^{'},\ldots, e_{i_{k_{1}-(m-m_{1})}}^{'}, e_{m_{1}+1},\dots, e_{m}\}$. Now, we put
\[
Q_{1}:=(e_{1}^{(1)},\ldots,e_{m-k_{1}}^{(1)}, e_{i_{1}}^{'},\ldots, e_{i_{k_{1}-(m-m_{1})}}^{'}, e_{m_{1}+1},\dots, e_{m}).
\]
By construction, $Q_{1}$ consists of eigenvectors of $\widetilde{B}_{1}$, and each component of  $Q_{1}$ is a smooth function on $U_{p_{1}}$. Moreover, $Q_{1}$ is expressed by ${\rm diag}(Q_{1}', I_{m_{2}},\ldots, I_{m_{r}})$ for some square matrix $Q_{1}'$ of order $m_{1}$ (since the matrix is expressed by using $\{e_{\mu}\}_{\mu=1}^{m}$), and we have $Q_{1}^{-1}\widetilde{B}_{1}Q_{1}={\rm diag}(B_{1}', I_{m_{2}},\ldots, I_{m_{r}})$, where $B_{1}'$ is the diagonalized matrix of $B_{1}$.

  Next, we consider a symmetric matrix $\widetilde{B}_{2}={\rm diag}(I_{m_{1}}, B_{2}, I_{m_{3}}, \ldots, I_{m_{r}})$ and by a similar argument, we see that there is a point $p_{2}\in U_{p_{1}}$, an open neighborhood $U_{p_{2}}\subset U_{p_{1}}$ and a matrix $Q_{2}$ whose components are smooth function on $U_{p_{2}}$ satisfying that $Q_{2}={\rm diag}(I_{m_{1}}, Q_{2}', I_{m_{3}},\ldots, I_{m_{r}})$ and  $Q_{2}^{-1}\widetilde{B}_{2}Q_{2}={\rm diag}(I_{m_{1}}, B_{2}', I_{m_{3}},\ldots, I_{m_{r}})$, where $B_{2}'$ is the diagonalized matrix of $B_{2}$.

  By repeating the argument, we obtain a decreasing sequence of open subsets $U_{p_{0}}\supset U_{p_{1}}\supset \cdots \supset U_{p_{r}}$ and matrices $Q_{1},\ldots, Q_{r}$ defined on $U_{p_{r}}$. We set $R:=Q_{1}Q_{2}\cdots Q_{r}$. Then, we see that $R^{-1}AR$ and $R^{-1}BR$ are diagonal matrices by construction of $Q_{i}$. Thus we put $E_{\mu}:=R(e_{\mu})$ for each $\mu=1,\ldots, m$ and then, since each $Q_{i}$ is an orthogonal matrix and has smooth components,  $\{E_{\mu}\}_{\mu=1}^{m}$ is a smooth orthonormal frame defined on $U_{p_{r}}$ which simultaneously diagonalizes $A$ and $B$. This completes the proof.
\end{proof}

\subsection*{Acknowledgements}
The authors would like to thank the referees for their kind comments and valuable suggestions. T.K. is supported by JSPS KAKENHI Grant Number 23K03122. K.K. is supported by JSPS KAKENHI Grant Number 23K03105.

%

\end{document}